\newtheorem{theorem}{Theorem}[section]
\newtheorem{lemma}[theorem]{Lemma}
\newtheorem{proposition}[theorem]{Proposition}
\theoremstyle{definition}
\newtheorem{remark}[theorem]{Remark}
\theoremstyle{plain}
\newcommand{\IR}{\mathbb{R}}
\newcommand{\IN}{\mathbb{N}}
\newcommand{\IE}{\mathbb{E}}
\newcommand{\IP}{\mathbb{P}}
\newcommand{\cO}{\mathcal{O}}
\newcommand{\X}{\mathrm{X}}
\newcommand{\norm}[1]{\left\Vert#1\right\Vert}
\newcommand{\divergence}{\operatorname{div}}
\numberwithin{equation}{section} 
\title[The three limits of the hydrostatic approximation]{The three limits of the hydrostatic approximation}
\author[Furukawa]{Ken Furukawa}
\address{Faculty of Science, Academic Assembly, University of Toyama,
	Gofuku 3190, Toyama, 930-0887, Toyama, Japan}
\email{furukawa@sci.u-toyama.ac.jp}
\author[Giga]{Yoshikazu Giga} 
\address{Graduate School of Mathematical Sciences, University of Tokyo, Komaba 3-8-1, Meguro-ku, Tokyo, 153-8914, Japan}
\email{labgiga@ms.u-tokyo.ac.jp}
\email{tkashiwa@ms.u-tokyo.ac.jp}
\author[Hieber]{Matthias Hieber} 
\address{Department of Mathematics,
	TU Darmstadt, Schlossgartenstr. 7, 64289 Darmstadt, Germany}
\email{hieber@mathematik.tu-darmstadt.de}
\email{marc.p.wrona@gmail.com}
\author[Hussein]{Amru Hussein} 
\address{Department of Mathematics,
	RPTU Kaiserslautern-Landau,  Paul-Ehrlich-Stra\ss e 31, 67663 Kaiserslautern, Germany}
\email{hussein@mathematik.uni-kl.de}
\author[Kashiwabara]{Takahito Kashiwabara}
\author[Wrona]{Marc Wrona}
\subjclass[2010]{Primary: 35Q35; Secondary: 47D06, 86A05.} 
\keywords{hydrostatic approximation, primitive equations with horizontal viscosity, scaled Navier-Stokes equations, strong convergence 
}	
\thanks{
	Yoshikazu Giga was partly supported by the Japan Society for the Promotion of Sciences (JSPS) through grants Kakenhi: Nos.20K20342, 19H00639, 18H05323, and by Arithmer Inc., Daikin Industries, Ltd. and Ebara Corporation through collaborative grants. Takahito Kashiwabara was supported by JSPS Grant-in-Aid for Early-Career Scientists No. 20K14357.
	Matthias Hieber gratefully acknowledges the support by the Deutsche Forschungsgemeinschaft (DFG) through  the Research Unit 5528. Amru Hussein has been generously supported by 
	DFG -- project number 508634462 and by MathApp -- Mathematics Applied to Real-World Problems -- part of the Research Initiative of the Federal State of Rhineland-Palatinate, Germany. 
}
\begin{document}

\begin{abstract}
	The primitive equations are derived from the $3D$-Navier-Stokes equations by the hydrostatic approximation. Formally, assuming an $\varepsilon$-thin domain and anisotropic viscosities with vertical viscosity $\nu_z=\mathcal{O}(\varepsilon^\gamma)$ where $\gamma=2$, one obtains the primitive equations with full viscosity as $\varepsilon\to 0$. Here, we take two more limit equations into consideration: For $\gamma<2$ the $2D$-Navier-Stokes equations are obtained. For $\gamma>2$ the primitive equations with only horizontal viscosity $-\Delta_H$ as $\varepsilon\to 0$. Thus, there are three possible limits of the hydrostatic approximation depending on the assumption on the vertical viscosity. 
	The latter convergence has been proven recently by Li,  Titi, and Yuan 
	using energy estimates. Here, we consider more generally $\nu_z=\varepsilon^2 \delta$ and show how maximal regularity methods and quadratic inequalities can be an efficient approach to the same end for $\varepsilon,\delta\to 0$.  The flexibility of our methods is also illustrated  by the convergence for $\delta\to \infty$ and $\varepsilon\to 0$ to the $2D$-Navier-Stokes equations.    
\end{abstract}

\maketitle
\addtocontents{toc}{\protect\setcounter{tocdepth}{1}}
\tableofcontents

\section{Introduction}\label{sec:intro}
The hydrostatic approximation of the Navier-Stokes equations is used in meteorological models, cf. e.g. \cite{Pedlosky1979, Vallis17, Washington, Majda}. Formally, it consists in replacing the equation for the vertical velocity of the fluid by the assumption that the pressure is determined by the surface pressure. It appears already in the pioneering work of Richardson originally dating back to 1922, see \cite{richardson_lynch_2007} for a recent edition, and it can be justified on physical grounds by a scale analysis. 
Mathematically, it can be justified by a rescaling of Navier-Stokes equations with anisotropic viscosities. That is, one considers on a vertically $\varepsilon$--thin domain
\begin{align*}
\Omega_{\varepsilon}:=G \times (-\varepsilon, \varepsilon), \quad \hbox{where} \quad G:=(-1,1)\times (-1,1), \quad \varepsilon>0,
\end{align*} 
the Navier-Stokes equations   
\begin{equation*}
\left \{\begin{array}{rll}
\partial _tu+u\cdot\nabla u-\Delta_H u - \nu_z(\varepsilon) \partial_z^2 u+\nabla p=& \ 0&\hbox{ in }(0,T)\times\Omega_\varepsilon,\\
\divergence u=& \ 0&\hbox{ in }(0,T)\times\Omega_\varepsilon,\\
u(0)=& u_0&\hbox{ in }\Omega_\varepsilon,\\
\end{array}\right .\tag{NS$^{\Omega_{\varepsilon}}$}
\end{equation*}
where the vertical viscosity coefficient $\nu_z(\varepsilon)$ is $\varepsilon$ dependent with $\nu_z(\varepsilon)\to 0$ as $\varepsilon\to 0$.  
Rescaling the velocity $u=(v,w)$ with horizontal components $v=(v_1,v_2)$ vertical component $w$, and the pressure $p$ by   
\begin{align*}
v_{\varepsilon,\delta}(x,y,z):= v(x,y,\varepsilon z), \quad
w_{\varepsilon,\delta}(x,y,z):= \tfrac{1}{\varepsilon} w(x,y,\varepsilon z), \quad \hbox{and} \quad
p_{\varepsilon,\delta}(x,y,z):= p(x,y,\varepsilon z),
\end{align*}
equation~\eqref{eq:NS_nu} transforms to the rescaled anisotropic Navier-Stokes equations on the $\varepsilon$-independent domain $\Omega:=\Omega_1$
\begin{equation}\label{eq:NS_eps_Intro}
\left \{\begin{array}{rll}
\partial _tv_{\varepsilon,\delta}+u_{\varepsilon,\delta}\cdot\nabla v_{\varepsilon,\delta}-\Delta_{H} v_{\varepsilon,\delta}
-\frac{\nu_z(\varepsilon)}{\varepsilon^2}\partial_z^2 v_{\varepsilon,\delta}
+\nabla _Hp_{\varepsilon,\delta}=& \ 0&\text{ in }(0,T)\times\Omega ,\\
\varepsilon^2\left(\partial _tw_{\varepsilon,\delta}+u_{\varepsilon,\delta}\cdot \nabla w_{\varepsilon,\delta}-\Delta_{H} w_{\varepsilon,\delta}-\frac{\nu_z(\varepsilon)}{\varepsilon^2}\partial_z^2w_{\varepsilon,\delta}\right) +\partial _zp_{\varepsilon,\delta}=&\ 0&\text{ in }(0,T)\times\Omega ,\\
\divergence u_{\varepsilon,\delta} =& \ 0&\text{ in }(0,T)\times\Omega ,\\
u_{\varepsilon,\delta} (0)=&\ (u_0)_{\varepsilon,\delta}&\text{ in }\Omega,
\end{array}\right .\tag{NS$_{\varepsilon,\delta}$}
\end{equation}
where the $w$-equation has been multiplied by $\varepsilon^2$. This recaling procedure goes back at least to \cite{Azerad, BessonLaydi1992}. 
In meteorological modeling anisotropic and partial viscosities appear naturally, cf. e.g. \cite{Chemin_2006}, since the viscosity is largely an eddy viscosity, cf. e.g. \cite{PTZ, Washington}.


The formal limit equation for $\varepsilon\to 0$ now depends crucially on the behaviour of the term
$\frac{\nu_z(\varepsilon)}{\varepsilon^2}$. More generally, we consider 
\begin{align}\label{eq:nudeltaeps}
\nu_z=\varepsilon^2\delta \quad \hbox{for}\quad \varepsilon,\delta>0,\quad \hbox{that is} \quad \frac{\nu_z(\varepsilon)}{\varepsilon^2}=\delta.
\end{align}
For $\varepsilon\to 0$ one hence has three cases: For $\delta>0$ constant setting for simplicity $\delta=1$, one has formally the primitive equations with full viscosity as $\frac{\nu_z(\varepsilon)}{\varepsilon^2}=1$, compare \cite{convergence, Azerad, MR3926040}. If $\delta \to 0$, one obtains the primitive equations with only horizontal viscosity as in \eqref{eq:NS_eps_Intro} also the term $\frac{\nu_z(\varepsilon)}{\varepsilon^2}\partial_z^2v_\varepsilon\to 0$ for $\delta\to 0$. 
The case $\delta\to \infty$ is not that straight forward. A first indication is the energy equality obtained by testing  \eqref{eq:NS_eps_Intro} with $(v_\varepsilon,  w_\varepsilon)$
\begin{align*}
\norm{(v_{\varepsilon}(t),\varepsilon w_\varepsilon)}_{L^2}^2 + \int_{0}^t \norm{\nabla_H( v_{\varepsilon}(s),  \varepsilon w_\varepsilon)}_{L^2}^2
+ \delta\norm{\partial_z (v_{\varepsilon}(s), \varepsilon w_\varepsilon)}_{L^2}^2 ds =	\norm{((v_0)_{\varepsilon}, (\varepsilon w_0)_{\varepsilon})}_{L^2}^2, \quad t>0.
\end{align*}
This implies that $\partial_z v_{\varepsilon}\to 0$ in $L^2(0,T;L^2(\Omega))$ as $\delta \to \infty$, that is, heuristically in the limit only the horizontal directions are relevant for the horizontal velocity. To understand the limit behaviour, one splits the anisotropic Navier-Stokes equations into 
barotropic and baroclinic modes, respectively, i.e., 
\begin{align*}
{u}_{\varepsilon,\delta}=\overline{u}_{\varepsilon,\delta}+\tilde{u}_{\varepsilon,\delta}, \quad \hbox{where}\quad
\overline{u}_{\varepsilon,\delta}:= \overline{v}_{\varepsilon,\delta}  \quad \hbox{and} \quad 
\tilde{u}_{\varepsilon,\delta}:= (\tilde{v}_{\varepsilon,\delta}, w_{\varepsilon,\delta}) 
\end{align*}
with
\begin{align*}
\overline{v}_{\varepsilon,\delta}:= \frac{1}{2}\int_{-1}^{+1} v_{\varepsilon,\delta}(\cdot,\cdot,\xi) d\xi
\quad \hbox{and} \quad 
\tilde{v}_{\varepsilon,\delta}:= {v}_{\varepsilon,\delta}-\overline{v}_{\varepsilon,\delta}.
\end{align*}
Then
one obtains for $\overline{v}_{\varepsilon,\delta}$ the $2D$-Navier-Stokes equations with forcing
\begin{equation*}
\left \{\begin{array}{rll}
\partial _t\overline{v}_{\varepsilon,\delta}+\overline{v}_{\varepsilon,\delta}\cdot\nabla \overline{v}_{\varepsilon,\delta}-\Delta_{H} \overline{v}_{\varepsilon,\delta}+\nabla _H\overline{p}_{\varepsilon,\delta}=& 
\ - \tfrac{1}{2}\int_{-1}^{+1}\tilde{u}_{\varepsilon,\delta}\cdot\nabla \tilde{v}_{\varepsilon,\delta}
&\text{ in }(0,T)\times G,\\
\divergence_H \overline{v}_{\varepsilon,\delta} =& \ 0&\text{ in }(0,T)\times G ,\\
\overline{v}_{\varepsilon,\delta} (0)=&\ (\overline{v}_0)_{\varepsilon,\delta}&\text{ in }G,
\end{array}\right .
\end{equation*}
where using that $\divergence \tilde{u}_{\varepsilon,\delta}=0$ and assuming $\tilde{v}_{\varepsilon,\delta}$ to be even with respect to the vertical direction one writes
\begin{align*}
- \tfrac{1}{2}\int_{-1}^{+1}\tilde{u}_{\varepsilon,\delta}\cdot\nabla \tilde{v}_{\varepsilon,\delta}
=- \tfrac{1}{2}\int_{-1}^{+1} \tilde{v}_{\varepsilon,\delta}\cdot\nabla_H \tilde{v}_{\varepsilon,\delta}
-w_{\varepsilon,\delta}\partial_z \tilde{v}_{\varepsilon,\delta}
=\tfrac{1}{2}\int_{-1}^{+1} -\tilde{v}_{\varepsilon,\delta}\cdot\nabla_H \tilde{v}_{\varepsilon,\delta}+\divergence_{H}\tilde{v}_{\varepsilon,\delta}\cdot\tilde{v}_{\varepsilon,\delta}.
\end{align*}
Here, by the Poincar\'e inequality $\partial_z \tilde{v}_{\varepsilon,\delta}\to 0$ implies formally $\tilde{v}_{\varepsilon,\delta}\to 0$ as $\delta \to \infty$.  Therefore -- assuming that $\nabla_H \tilde{v}_{\varepsilon,\delta}$ remains bounded -- the forcing term vanishes leaving on a formal level the $2D$-Navier-Stokes equations as limit equations as $\delta\to \infty$. 

This formal reasoning will be made rigorous in Section~\ref{sec:quadratic_inequality}. 
Here, we study these three limits as sketched in Figure~\ref{fig:epsdelta}, see also Remark~\ref{rem:convergence} below. The full set of equations discussed is given in detail in Section~\ref{sec:eq}.

Depending on the assumptions on the scaling of the vertical viscosity, there can also be other limit equations  as in the case of the great lake equation with only vertical viscosity in the limit as discussed by Bresch,  Lemoine and Simon in  \cite{Bresch1999}. 
There, isotropic $\varepsilon$-independent viscosities are assumed and a different re-scaling is applied. 
Another setting is where the 3D-Navier-Stokes equations on a thin domain $\Omega_\varepsilon$ are considered, however with constant $\varepsilon$-independent viscosity. On such domains comparisons to  2D-Navier-Stokes-type equations can be shown, and thereby -- under certain assumptions on the data -- the global well-posedness of the 3D-Navier-Stokes equations on such thin domains follows, see e.g. \cite{MR1634572, MR1268906, MR1319876, MR1179539, MR2333468, MR2319923} and the references therein. The main difference to our approach is that we consider anisotropic viscosities and therefore rescaled rather than actual 3D Navier-Stokes equations as limiting equations. In particular, we do not have to restrict the class of the data.    
Indeed as pointed out in \cite[1. Introduction]{Azerad}, the anisotropic viscosity hypothesis
is fundamental for the derivation of the primitive equations.
For an illustrative overview on the model hierarchy and the various approximations for geophysical models see \cite[Figure 1.1]{Luesink_PhD}.

Instead of the general assumption \eqref{eq:nudeltaeps} on $\nu_z$ in \eqref{eq:NS_eps_Intro},
one can consider more concretely 
\begin{align*}
\nu_z(\varepsilon)=\varepsilon^{\gamma}\quad \hbox{for}\quad \gamma>0, \quad \hbox{that is}\quad \delta_\varepsilon=\varepsilon^{\gamma-2}.
\end{align*}
This has been studied recently by Li, Titi and Yuan in \cite{LiTitiGuozhi_2022} for $\gamma>2$ which corresponds to the case $\delta \to 0$ here. A comparison between the results obtained in \cite{LiTitiGuozhi_2022} and here is given in Remark~\ref{rem:comp} below.    
One conclusion from the above considerations -- summarized in Figure~\ref{fig:epsdelta} -- is that if there is now some uncertainty as to how to model the parameter $\gamma$, then the case where the limit is the primitive equations with full viscosity is only a borderline case. The more stable limits under variation of $\gamma$ are in fact the primitive equations with only horizontal viscosity and the $2D$-Navier-Stokes equations. This is summarized in Figure~\ref{fig:gamma}, and
it  is one motivation to 
prove norm convergence results for these three cases in appropriate norms as elaborated in this note.

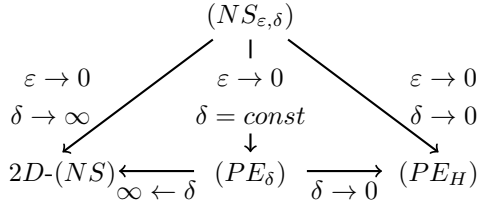
\begin{figure}[h]
	\begin{tikzpicture}[scale=0.5]
	\node[above] at (0,0) {$(NS_{\varepsilon,\delta})$};	
	\draw[thick, -] (0,0) -- (0,-0.5);
	\draw[thick, ->] (0,-2.5) -- (0,-3);	
	\node[below] at (0,-3) {$(PE_\delta)$};	
	\node at (0,-1) {$\varepsilon\to 0$};	
	\node at (0,-2) {$\delta=const$};
	\draw[thick, ->] (-1,0) -- (-5,-3);			
	\node[below] at (-5,-3) {$2D$-$(NS)$};	
	\node[left] at (-4,-1) {$\varepsilon\to 0$};	
	\node[left] at (-4,-2) {$\delta\to \infty$};
	\draw[thick, ->] (1,0) -- (5,-3);			
	\node[below] at (5,-3) {$(PE_H)$};	
	\node[right] at (4,-1) {$\varepsilon\to 0$};	
	\node[right] at (4,-2) {$\delta\to 0$};
	\draw[thick, ->] (-1.5,-3.5) -- (-3.5,-3.5);	
	\node[below] at (-2.5,-3.5) {$\infty \gets\delta$};					
	
	\draw[thick, ->] (1.5,-3.5) -- (3.5,-3.5);	
	\node[below] at (2.5,-3.5) {$\delta \to 0$};					
	\end{tikzpicture}
	\caption{Convergences schematically}\label{fig:epsdelta}
\end{figure}


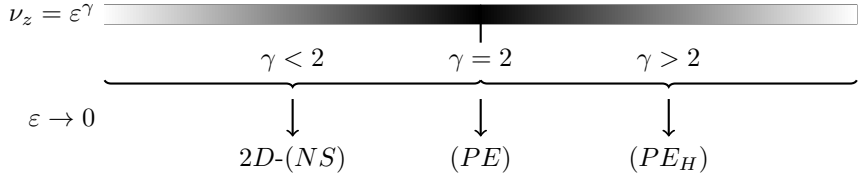
\begin{figure}[h]
	\begin{tikzpicture}[scale=0.5]
		\draw (0, 0) rectangle (10, 0.5); \
		\shade[left color=black, right color=white]
		(0,0) rectangle (10,0.5);
		\draw (0, 0) rectangle (-10, 0.5); \
		\shade[left color=white, right color=black]
		(0,0) rectangle (-10,0.5);
		\draw[thick] (0,0.5) -- (0,-0.5);
		\draw [
		thick,
		decoration={
			brace,
			mirror,
			raise=0.5cm
		},
		decorate
		] (0,-0.5) -- (10,-0.5);
		\node[above] at (5,-1.5) {$\gamma>2$};	
		\node[left] at (-10,0.25) {$\nu_z=\varepsilon^\gamma$};	
		\draw [
		thick,
		decoration={
			brace,
			mirror,
			raise=0.5cm
		},
		decorate
		] (-10,-0.5) -- (-0,-0.5);
		\node[above] at (-5,-1.5) {$\gamma<2$};	
		\node[above] at (0,-1.5) {$\gamma=2$};	
		\node[left] at (-10,-2.5) {$\varepsilon\to 0$};	
		\node[below] at (-5,-3) {$2D$-$(NS)$};	
		\draw[thick, ->] (-5,-2) -- (-5,-3);	
		\node[below] at (0,-3) {$(PE)$};	
		\draw[thick, ->] (0,-2) -- (0,-3);	
		\node[below] at (5,-3) {$(PE_H)$};	
		\draw[thick, ->] (5,-2) -- (5,-3);			
	\end{tikzpicture}
	\caption{Convergences schematically for $\nu_z=\varepsilon^\gamma$ as $\varepsilon\to 0$}\label{fig:gamma}
\end{figure}




\subsection{Literature on the  limit equations}
For the three limit equations in Figure~\ref{fig:epsdelta} global strong well-posedness is known. For the $2D$-Navier-Stokes equations the global well-posedness for initial data in $L^2$ is known since the early works by Leray, see \cite{Leray1934} and also e.g. \cite[Chapter V]{Sohr_book} and the references therein. 

The mathematical analysis of the primitive equations with full viscosity started much later, and it has been initiated in a series of papers by Lions, Temam and Wang \cite{Lionsetall1992, Lionsetall1992_b, Lionsetall1993}. There the primitive equations have been derived formally from the Navier-Stokes equations, and the existence of global weak solutions has been proven for initial data in $L^2$. A breakthrough result in the analysis of these equations has been the work by Cao and Titi, cf. \cite{CaoTiti2007}, who proved the global strong well-posedness for initial data in $H^1$. Since then there has been some refinements of this result enlarging the set of possible initial values, compare e.g. \cite{MR3592073, GigaGriesHusseinHieberKashiwabara2017NN, GGHHK_2020_a, Ju2017}, while the question of global well-posedness for initial data in $L^2$ remains open, see also \cite{boutros2023nonuniqueness} for recent developments in this direction.

The primitive equations with only horizontal viscosity has been studied by Cao, Li and Titi \cite{CaoLiTiti_PHE_2017, MR4117887}, see also \cite{HusseinSaalWrona} for an adaptation thereof,  proving that this equations are globally strongly well-posed. This can be contrasted with results on the ill-posedness  and blow-ups for the inviscid primitive equations, cf. e.g. \cite{MR3509003, Caoetall2015} and the references therein, and also \cite{Saal2018} for very weak viscosity terms which still guarantee at least local well-posedness.   
The primitive equations with only horizontal viscosity are part of a range of problems. When taking the temperature into account different types of anisotropic and partial viscosity and diffusivity can be imposed, see \cite{MR3237881, MR3264417, MR4117887}.
Here, we neglect the temperature and focus on the mathematically most challenging part which is the velocity equations. 

For the $3D$-Navier-Stokes equations the problem of the global well-posedness remains open, and there has been recent developments hinting towards blow ups, see \cite{Colombo2022, Merle_1, Merle_2}, and non-uniqueness, see  \cite{Buckmaster2017}. 
 As a side effect of our convergence results, it turns out that the  anisotropic $3D$-Navier-Stokes become globally well-posed when its solutions approaches one of the globally well-posed limit equations. 
%
%
%
%
%

%
\subsection{Literature on the hydrostatic approximation and our approach to it}
The scaling procedure outlined above 
had already been used by  Besson, Laydi and Touzani  in \cite{BessonLaydi1990}
for the stationary linear case and then by Besson and Laydi in \cite {BessonLaydi1992} for the stationary non-linear case.
The question of the convergence of the scaled Navier-Stokes equations to the primitive equations with full viscosity has been addressed first
 by Az\'{e}rad and Guill\'{e}n in \cite{Azerad} using compactness arguments and therefore without explicit convergence rate, where the linear time dependent problem has been studied before in \cite {Azerad_phD}. 
In the work by Li and Titi \cite{MR3926040} for the first time  strong convergence  was proven with convergence rate of order $\mathcal{O}(\varepsilon)$ based on energy estimates.
Later and taking a different approach by using maximal $L_t^q$-$L^p_x$-regularity methods and quadratic estimates, we proved in \cite{convergence} norm convergence of the same order for a large set of $p,q\in (1,\infty)$, see also \cite{convergence2} for the more difficult case of Dirichlet boundary conditions.  Non-periodic domains are included also in  \cite{Donatelli_2022}.   The cases of the scaled Boussinesq equations is considered in \cite{Pu_2023_a, Pu_2023_b}, the case of the compressible primitive equations is discussed in \cite{Sarka_2022} and the inviscid case in \cite{Sarka_2023}.

Here, we aim to adapt the methods developed in \cite{convergence} to prove, depending on the scaling of the vertical viscosity, convergence of the anisotropic Navier-Stokes equations to the primitive equations with only horizontal viscosity and to the $2D$-Navier Stokes equations, respectively. 
The problem of the convergence of the scaled Navier-Stokes equations to the primitive equations with only horizontal viscosity has been discussed recently by Li, Titi, and Yuan in \cite{LiTitiGuozhi_2022}.
The result obtained for $\nu_z=\varepsilon^\gamma$ for $\gamma>2$ there rely on energy methods, and they give 
convergence in the weak and strong sense with explicit convergence rates. The results are quite close to the results obtained here as discussed in Remark~\ref{rem:comp} below. 


The strength of the approach presented here is that it is easily adaptable which we show by including general $\nu_z=\varepsilon^2\cdot\delta$ instead of only $\delta_{\varepsilon}=\varepsilon^{\gamma-2}$ and by including the convergences $\varepsilon,\delta\to 0$ and $\varepsilon\to 0,\delta\to \infty$.
On a technical level the starting point is -- as e.g. in \cite{LiTitiGuozhi_2022, convergence, convergence2, MR3926040}  -- to consider the difference of the approximating and the limit equations. The main idea of 
our strategy is then to prove quadratic like inequalities for certain norms. These quadratic like inequalities are reminiscent to the approach by Fujita and Kato for the Navier-Stokes equations in \cite{FujitaKato}. There smallness of the data or the existence time implies contraction properties.
Here,  smallness enters the  inequalities in terms of the  parameters $\varepsilon, \delta$, and the quadratic inequalities imply uniform estimates with respect to these parameters, see Section~\ref{sec:quadratic_inequality} for a detailed discussion.

Originally, we developed this approach for the case of the hydrostatic approximation for the primitive equations with full viscosity in \cite{convergence}. To adapt the approach to the situation here, we have to make some adjustments reflecting the parabolic-hyperbolic character of the primitive equations with only horizontal viscosity. First, in order to obtain quadratic inequalities in maximal $L^2$-regularity norms and to compare this to \eqref{eq:NS_eps_Intro}, we add the missing term $\delta \partial^2_z v$ to both sides of the primitive equations with only horizontal viscosity given in Subsection~\ref{subsec:PEH} below. To continue with our strategy we then have to prove  uniform  estimates on  $\delta \partial^2_z v$ to estimate the right hand side. Second, to control the vanishing vertical viscosity on the left hand side, we linearise the difference equation as a diffusion-transport equation  with parabolic properties in horizontal directions and transport features in the vertical direction. Thereby we can prove that the vertical regularity of solutions is preserved and that it can be estimated in terms of the right hand side entering our quadratic like inequality.  
 
The convergence of the anisotropic Navier-Stokes equations for $\varepsilon\to 0$ and $\delta\to \infty$ can be proven following a similar overall strategy which is actually closer to \cite{convergence} and needs less adjustments. That all three limits of the hydrostatic approximation can be treated by this same general strategy  
illustrates that this is indeed a very  flexible  tool  to prove norm convergences. The hydrostatic approximation is just one of the mechanisms in the derivation of models in fluid mechanics,  see \cite[Figure 1.1]{Luesink_PhD} for an illustration, and
the methods presented here might be helpful to prove relevant convergences for other models.

\subsection{Outline} We start by discussing in the subsequent Section~\ref{sec:eq} the rescaled Navier-Stokes equations, its limit equations, and the equations satisfied by their differences. These difference equations are the main object studied here. In Section~\ref{sec:functionspaces} the framework of function spaces used here is introduced. 
In Section~\ref{sec:main} we present our main convergence results Theorems~\ref{thm:main1} and \ref{thm:main2} summarized in Remark~\ref{rem:convergence}. 
Their proof is subdivided into several steps: In Section~\ref{sec:lin}, we study linearisation for the difference equations, and in Section~\ref{sec:non_lin} non-linear estimates are derived. This is used in Section~\ref{sec:additional_reg} to show that assuming more regularity on the data, then the solution to the primitive equations with horizontal viscosity has additional regularity properties. In Section~\ref{sec:quadratic_inequality} we give a general scheme to conclude form quadratic inequalities uniform norm estimates. The results form the previous sections are then put together to apply this to the situation here and to prove Theorems~\ref{thm:main1} and \ref{thm:main2}.

\section{Rescaling procedure and the formal limit equations}\label{sec:eq}
In this section we introduce the precise setting for the 
limit and limiting equations treated, and 
for the relevant differences of the approximating and the limit equations.

\subsection{Rescaled $3D$ Navier-Stokes equations}
Consider the cylindrical $\varepsilon$-thin domain 
\begin{align}\label{eq:Omega}
\Omega_{\varepsilon}:=G \times (-\varepsilon, \varepsilon), \quad \hbox{where} \quad G:=(-1,1)\times (-1,1), \quad \varepsilon>0,
\end{align}
setting $$\Omega:=\Omega_1.$$ The horizontal coordinates are denoted by $(x,y)\in (-1,1)\times (-1,1)$,  the vertical coordinate by $z\in (-\varepsilon, \varepsilon)$,
and a finite time interval $(0,T)$ is given  with $T\in (0,\infty)$.
 Then, one considers the Navier-Stokes equations on $\Omega_\varepsilon$ with anisotropic viscosity given in terms of the horizontal viscosity constant $\nu_H>0$ and vertical viscosity constant $\nu_z>0$, and subject to periodic boundary conditions along with some parity in vertical direction, i.e., the velocity
  $u=(v,w)\colon \Omega_\varepsilon \times (0,T)\rightarrow \IR^3$ with $v=(v_1,v_2)$ and the pressure $p\colon \Omega_\varepsilon \times (0,T) \rightarrow \IR$ satisfy
\begin{equation}\label{eq:NS_nu}
\left \{\begin{array}{rll}
\partial _tu+u\cdot\nabla u-\nu_H\Delta_H u - \nu_z \partial_z^2 u+\nabla p=& \ 0&\hbox{ in }(0,T)\times\Omega_\varepsilon,\\
\divergence u=& \ 0&\hbox{ in }(0,T)\times\Omega_\varepsilon,\\
 u(0)=& u_0&\hbox{ in }\Omega_\varepsilon,\\
p, v, w \hbox{ periodic in }x,y,z,& \\
v,p \hbox{ even in }z,&\\
w \hbox{ odd in }z.&\\
\end{array}\right .\tag{NS$^{\Omega_{\varepsilon}}$}
\end{equation}
Here, we assume that $\nu_H=\cO(1)$ and that $\nu_z=\cO(\varepsilon^2 \delta)$, and for simplicity we set with $\varepsilon>0$ as  in \eqref{eq:Omega}
\begin{align*}
	\nu_H=1 \quad \hbox{and} \quad \nu_z = \varepsilon^2\cdot \delta \quad \hbox{for}\quad \delta>0.
\end{align*}
Here and in the following, we use the  notations
\begin{align*}
	\Delta_\delta = \Delta_H + \delta \partial_z^2, \quad \Delta_H=\partial_x^2+\partial_y^2,\quad \nabla_H=\begin{bmatrix} \partial_x \\ \partial_y\end{bmatrix}, \quad
	\divergence_H =\begin{bmatrix} \partial_x & \partial_y\end{bmatrix}.
\end{align*}
Then, one introduces the rescaled functions   $u_{\varepsilon,\delta}:=(v_{\varepsilon,\delta}, w_{\varepsilon,\delta})$ and $p_{\varepsilon,\delta}$ with
\begin{align*}
v_{\varepsilon,\delta}(x,y,z):= v(x,y,\varepsilon z), \quad
w_{\varepsilon,\delta}(x,y,z):= \tfrac{1}{\varepsilon} w(x,y,\varepsilon z), \quad \hbox{and} \quad
p_{\varepsilon,\delta}(x,y,z):= p(x,y,\varepsilon z),
\end{align*}
and thereby
equation~\eqref{eq:NS_nu} transforms to the rescaled anisotropic Navier-Stokes equations on $\Omega=\Omega_1$
\begin{equation}\label{eq:NS_eps}
\left \{\begin{array}{rll}
\partial _tv_{\varepsilon,\delta}+u_{\varepsilon,\delta}\cdot\nabla v_{\varepsilon,\delta}-\Delta_{\delta} v_{\varepsilon,\delta}+\nabla _Hp_{\varepsilon,\delta}=& \ 0&\text{ in }(0,T)\times\Omega ,\\
\partial _tw_{\varepsilon,\delta}+u_{\varepsilon,\delta}\cdot \nabla w_{\varepsilon,\delta}-\Delta_{\delta} w_{\varepsilon,\delta}+\tfrac{1}{\varepsilon ^2}\partial _zp_{\varepsilon,\delta}=&\ 0&\text{ in }(0,T)\times\Omega ,\\
\divergence u_{\varepsilon,\delta} =& \ 0&\text{ in }(0,T)\times\Omega ,\\
u_{\varepsilon,\delta} (0)=&\ (u_0)_{\varepsilon,\delta}&\text{ in }\Omega, \\
p_{\varepsilon,\delta}, v_{\varepsilon,\delta},w_{\varepsilon,\delta}, \text{ periodic in }x,y,z,& &\\
p_{\varepsilon,\delta}, v_{\varepsilon,\delta}\text{ even in } z,& & \\
w_{\varepsilon,\delta}\text{ odd in } z.& &
\end{array}\right .\tag{NS$_{\varepsilon,\delta}$}
\end{equation}
The even and odd parity conditions together with the periodicity imply the boundary conditions 
\begin{align*}
	\partial_zv_{\varepsilon,\delta}(\cdot, \cdot, \pm \varepsilon)=0, \quad \partial_zv_{\varepsilon,\delta}(\cdot, \cdot, 0)=0 \quad \hbox{and} \quad w_{\varepsilon,\delta}(\cdot, \cdot, \pm \varepsilon)=0, \quad  w_{\varepsilon,\delta}(\cdot, \cdot, 0)=0.
\end{align*}

\subsection{The anisotropic primitive equations}
Now, multiplying the equation for $w_{\varepsilon,\delta}$ in \eqref{eq:NS_eps} by $\varepsilon^2$, the anisotropic primitive equations are obtained by taking the formal limit of \eqref{eq:NS_eps} as $\varepsilon\to 0$ while fixing $\delta>0$. This means
we search for functions $u_{0,\delta}=(v_{0,\delta},w_{0,\delta})$ and $p_{0,\delta}$ satisfying 
\begin{equation}\label{eq_PE}
\left \{\begin{array}{rll}
\partial _tv_{0,\delta}+u_{0,\delta}\cdot\nabla v_{0,\delta}-\Delta_\delta v_{0,\delta}+\nabla _Hp_{0,\delta}=& \ 0&\text{ in }(0,T)\times\Omega ,\\
\partial _zp_{0,\delta}=&\ 0&\text{ in }(0,T)\times\Omega ,\\
\divergence u_{0,\delta}=& \ 0&\text{ in }(0,T)\times\Omega ,\\
v_{0,\delta}(0)=&\ (v_{0,\delta})_0&\text{ in }\Omega, \\
p_{0,\delta}, v_{0,\delta} ,w_{0,\delta} \text{ periodic in }x,y,z,& &
\\
v_{0,\delta}\text{ even in } z,& & \\
w_{0,\delta}\text{ odd in } z.& &
\end{array}\right .\tag{PE$_\delta$}
\end{equation}
The main difference in the structure of \eqref{eq_PE} as compared to \eqref{eq:NS_eps} is that
there is no evolution equations for the vertical velocity. Instead it can be recovered from the divergence free condition
and the parity conditions as
\begin{align}\label{eq:w}
w_{0,\delta}=w_{0,\delta}(v_{0,\delta})\quad \hbox{with} \quad	w_{0,\delta}(x,y,z)= -\int_{-1}^z \divergence_{H} 	v_{0,\delta}(x,y,\xi) d\xi,
\end{align}   
and therefrom the parity conditions on $w_{0,\delta}$ imply that the condition $\divergence u_{0,\delta}$ can be substituted by
\begin{align}\label{eq:divHv}
\divergence_{H} \int_{-1}^{1}v_{0,\delta}=0,
\end{align} 
compare e.g. \cite{Hieber2016} for a detailed discussion.
\subsection{The  primitive equations with only horizontal viscosity}\label{subsec:PEH}
Multiplying in \eqref{eq:NS_eps} the equation for $w_{\varepsilon,\delta}$ by $\varepsilon^2$, one can take the formal limit of \eqref{eq:NS_eps} as both $\varepsilon\to 0$ and $\delta\to 0$. Or, one takes the limit $\delta\to 0$ in \eqref{eq_PE}. This results in the primitive equations with only horizontal viscosity, i.e., functions
 $u_{0,0}=(v_{0,0},w_{0,0})$ and $p_{0,0}$ satisfying
\begin{equation}\label{eq_PEH}
\left \{\begin{array}{rll}
\partial _tv_{0,0}+u_{0,0}\cdot\nabla v_{0,0}-\Delta_H v_{0,0}+\nabla _Hp_{0,0}=& \ 0&\text{ in }(0,T)\times\Omega ,\\
\partial _zp_{0,0}=&\ 0&\text{ in }(0,T)\times\Omega ,\\
\divergence u_{0,0}=& \ 0&\text{ in }(0,T)\times\Omega ,\\
v_{0,0}(0)=&\ (v_{0,0})_0&\text{ in }\Omega, \\
p_{0,0}, v_{0,0} ,w_{0,0} \text{ periodic in }x,y,z,& &
\\
v_{0,0},\text{ even in } z,& & \\
w_{0,0}\text{ odd in } z.& &
\end{array}\right .\tag{PE$_H$}
\end{equation}
Here, the only formal difference to \eqref{eq_PE} is that $\Delta_\delta$ is substituted by $\Delta_H$. 
This has some severe implication for its analysis, because while \eqref{eq_PE} is a parabolic equation \eqref{eq_PEH} has parabolic diffusion features with respect to the horizontal variables while it has hyperbolic transport-like behaviour with respect to the vertical variable. 
The relations \eqref{eq:w} and \eqref{eq:divHv} carry over to the case $\delta=0$.

\subsection{Difference equations for $\varepsilon\to 0$ and $\delta\to 0$}
Let $u_{\varepsilon,\delta}=(v_{\varepsilon,\delta}, w_{\varepsilon,\delta})$ and $p_{\varepsilon,\delta}$ be the solution to ~\eqref{eq:NS_eps} and $v=v_{0,0}$ and $p=p_{0,0}$ be the solution to \eqref{eq_PEH} with $w=w(v)=w_{0,0}$, where we omit the subscripts to simplify the notation. Then multiplying the equations in \eqref{eq:NS_eps} and \eqref{eq_PEH} for the vertical components $w_{\varepsilon,\delta}$ and $w$ of the velocity by $\varepsilon$, respectively, one considers the differences
\begin{align}\label{eq:VW}
V_{\varepsilon,\delta} :=v_{\varepsilon,\delta} -v, \quad W_{\varepsilon,\delta}=\varepsilon(w_{\varepsilon,\delta} -w), \quad \hbox{and} \quad P_{\varepsilon,\delta} :=p_{\varepsilon,\delta} -p
\end{align}
setting $U_{\varepsilon,\delta}=(V_{\varepsilon,\delta},W_{\varepsilon,\delta})$, and one introduces the scaled gradient and divergence
\begin{align*}
\nabla_{\varepsilon} = \begin{bmatrix}
\partial_x \\ \partial_y \\ \tfrac{1}{\varepsilon}\partial_z
\end{bmatrix} \quad \hbox{and} \quad \divergence_{\varepsilon} = \begin{bmatrix}
\partial_x & \partial_y & \tfrac{1}{\varepsilon}\partial_z
\end{bmatrix}.
\end{align*} 
The functions $U_{\varepsilon,\delta}=(V_{\varepsilon,\delta},W_{\varepsilon,\delta})$ and $P_{\varepsilon,\delta}$ satisfy  the following anisotropic Navier-Stokes-type equations
\begin{equation}\label{eq_Diff}
\left \{\begin{array}{rll}
\partial _tV_{\varepsilon,\delta}- \Delta_{\delta}V_{\varepsilon,\delta}+ \nabla_H P_{\varepsilon,\delta}=&  \ F^H_{\varepsilon,\delta}(V_{\varepsilon,\delta}, W_{\varepsilon,\delta}) &\text{ in }(0,T)\times\Omega ,\\
\partial _tW_{\varepsilon,\delta}- \Delta_{\delta} W_{\varepsilon,\delta}+\frac{1}{\varepsilon}\partial_z P_{\varepsilon,\delta}=&  \ F^z_{\varepsilon,\delta}(V_{\varepsilon,\delta}, W_{\varepsilon,\delta}) &\text{ in }(0,T)\times\Omega ,\\
\divergence_\varepsilon U_{\varepsilon,\delta} =& \ 0&\text{ in }(0,T)\times\Omega ,\\
U_{\varepsilon,\delta}(0)=& \ 0&\text{ in }\Omega, \\
P_{\varepsilon,\delta}, V_{\varepsilon,\delta} ,W_{\varepsilon,\delta} \text{ periodic in }x,y,z,& &
\\
V_{\varepsilon,\delta}, P_{\varepsilon,\delta} \text{ even in }z,& &\\
W_{\varepsilon,\delta} \text{ odd in }z,& &
\end{array}\right .\tag{Diff$_{\varepsilon,\delta}$}
\end{equation}
where similarly as in \cite[Equation (3.1)]{convergence}
\begin{align}\label{eq:F}
\begin{split}
F^H_{\varepsilon,\delta}(V_{\varepsilon,\delta}, W_{\varepsilon,\delta}):=&- ( V_{\varepsilon,\delta}, \tfrac{1}{\varepsilon}W_{\varepsilon,\delta}) \cdot \nabla v 
-u \cdot \nabla V_{\varepsilon,\delta}
-( V_{\varepsilon,\delta}, \tfrac{1}{\varepsilon}W_{\varepsilon,\delta}) \cdot \nabla V_{\varepsilon,\delta}
+ \delta\partial_z^2 v,\\
F_{\varepsilon,\delta}^z(V_{\varepsilon,\delta}, W_{\varepsilon,\delta}):=&- (  V_{\varepsilon,\delta}, \tfrac{1}{\varepsilon} W_{\varepsilon,\delta}) \cdot \nabla \varepsilon w 
-u \cdot \nabla W_{\varepsilon,\delta}
-(V_{\varepsilon,\delta}, \tfrac{1}{\varepsilon} W_{\varepsilon,\delta}) \cdot \nabla W_{\varepsilon,\delta} \\
&- \varepsilon(\partial_t w + u \cdot \nabla w - \Delta_\delta w).
\end{split}
\end{align}
Notice that by the divergence free condition and the parity conditions on $W_{\varepsilon,\delta}$ one can rewrite
\begin{align}\label{eq:Weps} 
 \tfrac{1}{\varepsilon}\partial_z W_{\varepsilon,\delta} =  -\divergence_H V_{\varepsilon,\delta} \quad \hbox{and}\quad 
  \tfrac{1}{\varepsilon} W_{\varepsilon,\delta}=-\int_{-1}^z \divergence_H V_{\varepsilon,\delta}
\end{align}
provided that $U_{\varepsilon,\delta}$ is regular enough.
One can  rewrite  \eqref{eq:F} -- using $\divergence (V_{\varepsilon,\delta}, \tfrac{1}{\varepsilon}W_{\varepsilon,\delta})=0$ and $\divergence (v,w)=0$ -- to become
\begin{align}\label{eq:Fdivform}
\begin{split}
\begin{bmatrix}
F^H_{\varepsilon,\delta}(V_{\varepsilon,\delta}, W_{\varepsilon,\delta}) \\
F_{\varepsilon,\delta}^z(V_{\varepsilon,\delta}, W_{\varepsilon,\delta})
\end{bmatrix}
=& -\divergence \left((v, \varepsilon w)\otimes ( V_{\varepsilon,\delta}, \tfrac{1}{\varepsilon}W_{\varepsilon,\delta})\right)
-\divergence \left(( V_{\varepsilon,\delta}, W_{\varepsilon,\delta})\otimes (v,  w) \right)
\\
&-\divergence \left(( V_{\varepsilon,\delta}, W_{\varepsilon,\delta}) \otimes ( V_{\varepsilon,\delta}, \tfrac{1}{\varepsilon}W_{\varepsilon,\delta})\right)
+
\begin{bmatrix}
+\delta\partial_z^2 v
\\
- \varepsilon(\partial_t w + u \cdot \nabla w - \Delta_\delta w)
\end{bmatrix}.
\end{split}
\end{align}
The terms $\delta\partial_z^2 v$ and $- \varepsilon(\partial_t w + u \cdot \nabla w - \Delta_\delta w)$ on the right hand side of \eqref{eq_Diff} have been added on both sides of the equation, and they act as forcing terms which vanish as $\delta\to 0$ and $\varepsilon\to 0$. This will provide us with the ``smallness'' needed to show convergence via quadratic inequalities in Section~\ref{sec:quadratic_inequality}.

\subsection{Rescaled $3D$ Navier-Stokes equations with $2D$ Navier-Stokes equations as subsystem}
Consider now the limit $\delta\to \infty$ and $\varepsilon\to 0$.
Equation~\eqref{eq:NS_eps} can equivalently be rewritten to contain the $2D$ Navier-Stokes equations as a subsystem.
To this end, 
 split solutions to \eqref{eq:NS_eps} into the vertical average and the vertically average free part 
\begin{align*}
\overline{v}_{\varepsilon,\delta}&:= \frac{1}{2}\int_{-1}^{+1}v_{\varepsilon,\delta}(.\cdot,\cdot,\xi) d\xi \quad \hbox{and} \quad 
\tilde{v}_{\varepsilon,\delta}:= {v}_{\varepsilon,\delta}-\overline{v}_{\varepsilon,\delta}, \\
\overline{p}_{\varepsilon,\delta}&:= \frac{1}{2}\int_{-1}^{+1} p_{\varepsilon,\delta}(\cdot,\cdot,\xi) d\xi \quad \hbox{and} \quad 
\tilde{p}_{\varepsilon,\delta}:= {p}_{\varepsilon,\delta}-\overline{p}_{\varepsilon,\delta},
\end{align*}
respectively. This induces a splitting into barotropic and baroclinic modes, respectively, i.e., 
\begin{align*}
{u}_{\varepsilon,\delta}=\overline{v}_{\varepsilon,\delta}+\tilde{u}_{\varepsilon,\delta}
\quad \hbox{and}\quad
{p}_{\varepsilon,\delta}=\overline{p}_{\varepsilon,\delta}+\tilde{p}_{\varepsilon,\delta}, 
\quad \hbox{where}\quad 
\tilde{u}_{\varepsilon,\delta}:= (\tilde{v}_{\varepsilon,\delta}, w_{\varepsilon,\delta}). 
\end{align*}
Here, due to the parity condition $\int_{-1}^{+1} w_{\varepsilon,\delta}(\cdot,\cdot,\xi) d\xi=0$, the vertical velocity  $w_{\varepsilon,\delta}$ contributes to the baroclinic modes.
Then \eqref{eq:NS_eps} can be reformulated to become
\begin{equation}\label{eq:NS_eps2}
\left \{\begin{array}{rll}
\partial _t\overline{v}_{\varepsilon,\delta}+\overline{v}_{\varepsilon,\delta}\cdot\nabla \overline{v}_{\varepsilon,\delta}-\Delta_{H} \overline{v}_{\varepsilon,\delta}+\nabla _H\overline{p}_{\varepsilon,\delta}=& 
\ \overline{F}(\tilde{u}_{\varepsilon,\delta})
&\text{ in }(0,T)\times G,\\
\partial _t\tilde{v}_{\varepsilon,\delta}
-\Delta_{\delta} \tilde{v}_{\varepsilon,\delta}+\nabla _H\tilde{p}_{\varepsilon,\delta}=&
\ \tilde{F}_1(\tilde{u}_{\varepsilon,\delta}, \overline{v}_{\varepsilon,\delta}) 
&\text{ in }(0,T)\times\Omega ,\\
\partial _tw_{\varepsilon,\delta}-\Delta_{\delta} w_{\varepsilon,\delta}+\tfrac{1}{\varepsilon ^2}\partial _z\tilde{p}_{\varepsilon,\delta}=&
\ \tilde{F}_2(\overline{v}_{\varepsilon,\delta}, \tilde{u}_{\varepsilon,\delta})
&\text{ in }(0,T)\times\Omega ,\\
\divergence_H \overline{v}_{\varepsilon,\delta} =& \ 0&\text{ in }(0,T)\times G ,\\
\divergence \tilde{u}_{\varepsilon,\delta} =& \ 0&\text{ in }(0,T)\times\Omega, \\
\tilde{u}_{\varepsilon,\delta} (0)=&\ (\tilde{u}_0)_{\varepsilon,\delta}&\text{ in }\Omega,\\
\overline{v}_{\varepsilon,\delta} (0)=&\ (\overline{v}_0)_{\varepsilon,\delta}&\text{ in }G, \\
\int_{-1}^1\tilde{v}_{\varepsilon,\delta}=0, \quad \int_{-1}^1\tilde{p}_{\varepsilon,\delta}= & \ 0 &\text{ in }G, \\
\overline{p}_{\varepsilon,\delta}, \overline{v}_{\varepsilon,\delta}, \text{ periodic in }x,y,& &\\
\tilde{p}_{\varepsilon,\delta}, \tilde{v}_{\varepsilon,\delta}, w_{\varepsilon,\delta}\text{ periodic in }x,y,z,& &\\
\tilde{p}_{\varepsilon,\delta}, \tilde{v}_{\varepsilon,\delta}\text{ even in }z,&  & \\
w_{\varepsilon,\delta}\text{ odd in }z,& &
\end{array}\right .\tag{NS$_{\varepsilon,\delta}^{(2)}$}
\end{equation}
where
 \begin{align*}
 	\overline{F}(\tilde{u}_{\varepsilon,\delta})&:=
 - \tfrac{1}{2}\int_{-1}^{+1}\tilde{u}_{\varepsilon,\delta}\cdot\nabla \tilde{v}_{\varepsilon,\delta}, \\
 \tilde{F}_1(\overline{v}_{\varepsilon,\delta}, \tilde{u}_{\varepsilon,\delta}) &:=
  -\tilde{v}_{\varepsilon,\delta}\cdot\nabla_H \overline{v}_{\varepsilon,\delta}
 -\overline{v}_{\varepsilon,\delta}\cdot\nabla_H \tilde{v}_{\varepsilon,\delta}
 -\tilde{u}_{\varepsilon,\delta}\cdot\nabla \tilde{v}_{\varepsilon,\delta}
 + \tfrac{1}{2}\int_{-1}^{+1}\tilde{u}_{\varepsilon,\delta}\cdot\nabla \tilde{v}_{\varepsilon,\delta}, \\
 \tilde{F}_2(\overline{v}_{\varepsilon,\delta}, \tilde{u}_{\varepsilon,\delta})&:=
 -\overline{v}_{\varepsilon,\delta}\cdot \nabla_H w_{\varepsilon,\delta}
 -\tilde{u}_{\varepsilon,\delta}\cdot \nabla w_{\varepsilon,\delta}.
 \end{align*}
A similar splitting holds for the primitive equations, compare e.g. \cite{CaoTiti2007} where it plays an important role in the derivation of global \textit{a priori} bounds.

\subsection{Anisotropic $3D$ Stokes and anisotropic $2D$ Navier-Stokes equations}
The system \eqref{eq:NS_eps2} can be compared to the following decoupled $2D$-Navier-Stokes  and average free $3D$-Stokes equations, i.e., for functions $\overline{v}_{0,\infty}, \overline{p}_{0,\infty} \tilde{v}_{0,\infty}^{\varepsilon,\delta}, w_{0,\infty}^{\varepsilon,\delta}$ and $\tilde{p}_{0,\infty}^{\varepsilon,\delta}$ with $\tilde{u}_{0,\infty}^{\varepsilon,\delta}=(\tilde{v}_{0,\infty}^{\varepsilon,\delta},w_{0,\infty}^{\varepsilon,\delta})$
\begin{equation}\label{eq:NS_eps3}
\left \{\begin{array}{rll}
\partial _t\overline{v}_{0,\infty}+\overline{v}_{0,\infty}\cdot\nabla \overline{v}_{0,\infty}-\Delta_{H} \overline{v}_{0,\infty}+\nabla _H\overline{p}_{0,\infty}=& \ 0 &\text{ in }(0,T)\times G ,\\
\partial _t\tilde{v}_{0,\infty}^{\varepsilon,\delta}-\Delta_{\delta} \tilde{v}_{0,\infty}^{\varepsilon,\delta}+\nabla _H\tilde{p}^{\varepsilon,\delta}_{0,\infty}=& \ 0
&\text{ in }(0,T)\times \Omega ,\\
\partial_tw_{0,\infty}^{\varepsilon,\delta}-\Delta_{\delta} w_{0,\infty}^{\varepsilon,\delta}+\tfrac{1}{\varepsilon ^2}\partial _z\tilde{p}^{\varepsilon,\delta}_{0,\infty}=& \ 
0
&\text{ in }(0,T)\times \Omega ,\\
\divergence \overline{v}_{0,\infty} =& \ 0&\text{ in }(0,T)\times  G ,\\
\divergence \tilde{u}^{\varepsilon,\delta}_{0,\infty} =& \ 0&\text{ in }(0,T)\times\Omega, \\
\tilde{u}_{0,\infty}^{\varepsilon,\delta} (0)=&\ (\tilde{u}_0)_{\varepsilon,\delta}&\text{ in }\Omega,\\
\overline{v}_{0,\infty} (0)=&\ (\overline{v}_0)_{\varepsilon,\delta}&\text{ in }G, \\
\int_{-1}^1\tilde{v}_{0,\infty}^{\varepsilon,\delta}=0, \quad \int_{-1}^1\tilde{p}_{0,\infty}^{\varepsilon,\delta}= & \ 0 &\text{ in }G, \\
\overline{p}_{0,\infty}, \overline{v}_{0,\infty}, \text{ periodic in }x,y,& &\\
\tilde{p}_{0,\infty}^{\varepsilon,\delta}, \tilde{v}_{0,\infty}^{\varepsilon,\delta}, w_{0,\infty}^{\varepsilon,\delta}\text{ periodic in }x,y,z,& &\\
\tilde{p}_{0,\infty}^{\varepsilon,\delta}, \tilde{v}_{0,\infty}^{\varepsilon,\delta}\text{ even in }z,& &\\
w_{0,\infty}^{\varepsilon,\delta}\text{ odd in }z.& &
\end{array}\right .\tag{NS$_{\varepsilon,\delta}^{(2D)}$}
\end{equation}

\subsection{Difference equation for $\varepsilon\to 0$ and $\delta\to \infty$}
Consider now the difference of solutions to \eqref{eq:NS_eps2} and \eqref{eq:NS_eps3} after multiplying the $w$-equations by $\varepsilon$. Let 
\begin{align}\label{eq:VW_2}
	\begin{split}
\bar{V}_{\varepsilon,\delta} &:=\bar{v}_{\varepsilon,\delta} -\bar{v}_{0,\infty}, \quad
\tilde{V}_{\varepsilon,\delta} :=\tilde{v}_{\varepsilon,\delta} -\tilde{v}_{0,\infty}^{\varepsilon,\delta}, \quad W_{\varepsilon,\delta}:=\varepsilon(w_{\varepsilon,\delta} -w_{0,\infty}^{\varepsilon,\delta}), \quad \hbox{and} \\
\tilde{U}_{\varepsilon,\delta} &:=(\tilde{V}_{\varepsilon,\delta},W_{\varepsilon,\delta}), \quad \overline{P}_{\varepsilon,\delta} :=\overline{p}_{\varepsilon,\delta} -\overline{p}_{0,\infty}, \quad\tilde{P}_{\varepsilon,\delta} :=\tilde{p}_{\varepsilon,\delta} -\tilde{p}^{\varepsilon,\delta}_{0,\infty},
\end{split}
\end{align}
and set
\begin{align}\label{eq:U_2}
U_{\varepsilon,\delta}:= (\overline{V}_{\varepsilon,\delta}+\tilde{V}_{\varepsilon,\delta}, W_{\varepsilon,\delta}),\quad
\tilde{u}_{0,\infty}^{\varepsilon,\delta}=(\tilde{v}_{0,\infty}^{\varepsilon,\delta}, w_{0,\infty}^{\varepsilon,\delta}),\quad \hbox{and}\quad
u_{0,\infty}^{\varepsilon,\delta}=(\overline{v}_{0,\infty}+\tilde{v}^{\varepsilon,\delta}_{0,\infty},  w^{\varepsilon,\delta}_{0,\infty}).
\end{align}
This solves
\begin{equation}\label{eq:NS_eps4}
\left \{\begin{array}{rll}
\partial _t\overline{V}_{\varepsilon,\delta}-\Delta_{H} \overline{V}_{\varepsilon,\delta}+\nabla _H\overline{P}_{\varepsilon,\delta}=& \overline{F}_{\varepsilon,\delta}^H&\text{ in }(0,T)\times G ,\\
\partial _t\tilde{V}_{\varepsilon,\delta}
-\Delta_{\delta} \tilde{V}_{\varepsilon,\delta}+\nabla _H\tilde{P}_{\varepsilon,\delta}=&\tilde{F}_{\varepsilon,\delta}^H &\text{ in }(0,T)\times\Omega,\\
\partial _tW_{\varepsilon,\delta}-\Delta_{\delta} W_{\varepsilon,\delta}+\tfrac{1}{\varepsilon }\partial _z\tilde{p}_{\varepsilon,\delta}=&\tilde{F}_{\varepsilon,\delta}^w
&\text{ in }(0,T)\times\Omega ,\\
\divergence_H \overline{V}_{\varepsilon,\delta} =& \ 0&\text{ in }(0,T)\times G ,\\
\divergence_{\varepsilon} \tilde{U}_{\varepsilon,\delta} =& \ 0&\text{ in }(0,T)\times\Omega, \\
\tilde{U}_{\varepsilon,\delta} (0)=&\ 0&\text{ in }\Omega,\\
\overline{V}_{\varepsilon,\delta} (0)=&\ 0&\text{ in }G, \\
\int_{-1}^1\tilde{V}_{\varepsilon,\delta}=0, \quad \int_{-1}^1\tilde{P}_{\varepsilon,\delta}= & \ 0 &\text{ in }G, \\
\overline{P}_{\varepsilon,\delta}, \overline{V}_{\varepsilon,\delta}, \text{ periodic in }x,y,& &\\
\tilde{P}_{\varepsilon,\delta}, \tilde{V}_{\varepsilon,\delta}, W_{\varepsilon,\delta}\text{ periodic in }x,y,z,& &\\
\tilde{P}_{\varepsilon,\delta}, \tilde{V}_{\varepsilon,\delta}\text{ even in }z,& &\\
W_{\varepsilon,\delta}\text{ odd in }z,& &
\end{array}\right .\tag{Diff$_{\varepsilon,\delta}^{(2)}$}
\end{equation}
where
\begin{align*}
\overline{F}_{\varepsilon,\delta}^H:=& \overline{F}(\tilde{u}_{\varepsilon,\delta}) - \overline{v}_{\varepsilon,\delta}\nabla_H\overline{v}_{\varepsilon,\delta}
+\overline{v}_{0,\infty}\nabla_H\overline{v}_{0,\infty}, \\
\tilde{F}_{\varepsilon,\delta}^H:=& \tilde{F}_1(\tilde{u}_{\varepsilon,\delta}, \overline{v}_{\varepsilon,\delta}) , \\
\tilde{F}_{\varepsilon,\delta}^w:=&\varepsilon\tilde{F}_2(\overline{v}_{\varepsilon,\delta}, \tilde{u}_{\varepsilon,\delta}).
\end{align*}
Inserting the relations from \eqref{eq:VW_2} this can be expressed as
\begin{align}\label{eq:F_Diff2}
	\begin{split}
\overline{F}_{\varepsilon,\delta}^H=&
-\overline{V}_{\varepsilon,\delta}\cdot\nabla_H \overline{V}_{\varepsilon,\delta}
-\overline{v}_{0,\infty}\cdot\nabla_H \overline{V}_{\varepsilon,\delta}
-\overline{V}_{\varepsilon,\delta}\cdot\nabla_H \overline{v}_{0,\infty}
\\ 
&-\tfrac{1}{2}\int_{-1}^{+1}(\tilde{V}_{\varepsilon,\delta}+\tilde{v}^{\varepsilon,\delta}_{0,\infty})\cdot\nabla_H (\tilde{V}_{\varepsilon,\delta}+\tilde{v}^{\varepsilon,\delta}_{0,\infty}) +
(\tfrac{1}{\varepsilon}W_{\varepsilon,\delta}+w^{\varepsilon,\delta}_{0,\infty})\cdot\partial_z (\tilde{V}_{\varepsilon,\delta}+\tilde{v}^{\varepsilon,\delta}_{0,\infty})
, \\
\tilde{F}_{\varepsilon,\delta}^H=&-(\tilde{V}_{\varepsilon,\delta}+\tilde{v}^{\varepsilon,\delta}_{0,\infty})\cdot\nabla_H (\overline{V}_{\varepsilon,\delta}+\overline{v}_{0,\infty})
-(\overline{V}_{\varepsilon,\delta}+\overline{v}_{0,\infty})\cdot\nabla_H (\tilde{V}_{\varepsilon,\delta}+\tilde{v}^{\varepsilon,\delta}_{0,\infty})
\\
&-(\tilde{V}_{\varepsilon,\delta}+\tilde{v}^{\varepsilon,\delta}_{0,\infty})\cdot\nabla_H (\tilde{V}_{\varepsilon,\delta}+\tilde{v}^{\varepsilon,\delta}_{0,\infty})
-(\tfrac{1}{\varepsilon}W_{\varepsilon,\delta}+w^{\varepsilon,\delta}_{0,\infty})\cdot\partial_z (\tilde{V}_{\varepsilon,\delta}+\tilde{v}^{\varepsilon,\delta}_{0,\infty})
\\
&+ \tfrac{1}{2}\int_{-1}^{+1}(\tilde{V}_{\varepsilon,\delta}+\tilde{v}^{\varepsilon,\delta}_{0,\infty})\cdot\nabla_H (\tilde{V}_{\varepsilon,\delta}+\tilde{v}^{\varepsilon,\delta}_{0,\infty})
+(\tfrac{1}{\varepsilon}W_{\varepsilon,\delta}+w^{\varepsilon,\delta}_{0,\infty})\cdot\partial_z (\tilde{V}_{\varepsilon,\delta}+\tilde{v}^{\varepsilon,\delta}_{0,\infty}),
\\
\tilde{F}_{\varepsilon,\delta}^w=&-(\overline{V}_{\varepsilon,\delta}+\overline{v}_{0,\infty})\cdot \nabla_H( W_{\varepsilon,\delta}+\varepsilon w^{\varepsilon,\delta}_{0,\infty})
\\
&-(\tilde{V}_{\varepsilon,\delta}+\tilde{v}^{\varepsilon,\delta}_{0,\infty})\cdot \nabla_H (W_{\varepsilon,\delta}+\varepsilon w^{\varepsilon,\delta}_{0,\infty}))
-(\tfrac{1}{\varepsilon}W_{\varepsilon,\delta}+ w^{\varepsilon,\delta}_{0,\infty})\cdot \partial_z (W_{\varepsilon,\delta}+\varepsilon w^{\varepsilon,\delta}_{0,\infty}).
\end{split}
\end{align}
Here again by \eqref{eq:Weps} the terms with $\tfrac{1}{\varepsilon}$ can be rewritten.
We will see that $\tilde{u}^{\varepsilon,\delta}_{0,\infty}\to 0$ in suitable norms as $\delta\to \infty$ uniformly for $\varepsilon\in(0,1]$ which will give a diminishing right hand side in \eqref{eq:NS_eps4}. 

\section{Function spaces}\label{sec:functionspaces}

\subsection{Isotropic function spaces on the torus}
For $\Omega=\Omega_1$, cf. \eqref{eq:Omega}, the periodic Bessel potential and Besov spaces will be needed. For $p,q\in (1,\infty )$ and $s\in [0,\infty )$ these are defined by
\begin{align*}
H^{s,p}_{per}(\Omega)&=\overline{C_{per}^\infty (\overline{\Omega })}^{\Vert \cdot\Vert _{H^{s,p}}} \quad \hbox{and} \quad B^{s}_{p,q,per}(\Omega )=\overline{C_{per}^\infty (\overline{\Omega })}^{\Vert \cdot\Vert _{B^{s}_{p,q}}}, \\
H^{s,p}_{per}(G)&=\overline{C_{per}^\infty (\overline{G})}^{\Vert \cdot\Vert _{H^{s,p}}} \quad \hbox{and} \quad B^{s}_{p,q,per}(G )=\overline{C_{per}^\infty (\overline{G})}^{\Vert \cdot\Vert _{B^{s}_{p,q}}},
\end{align*}
respectively. 
Here,
$C_{per}^\infty (\overline{\Omega })$ and $C_{per}^\infty (\overline{G })$ denote the space of smooth functions that are periodic of any order (cf. \cite[Section 2]{Hieber2016}) in all directions. The spaces $H^{s,p}$ denote the Bessel potential spaces of order $s$, with norm $\Vert \cdot \Vert _{H^{s,p}}$ defined via the restriction of the corresponding space defined on the whole space (cf. \cite[Definition 3.2.2.]{Triebel}). Analogously, $B^{s}_{p,q}$ denote the Besov spaces  which are defined by restrictions of functions on the whole space, see e.g. \cite[Definition 3.2.2.]{Triebel}. Note that $L^p=H^{0,p}_{per}$, 
and one sets $H^s:=H^{s,2}$. Periodic Bessel potential and Besov spaces can equivalently be defined via Fourier series, cf. \cite[Chapter 9]{Triebel}.

The divergence free conditions in the above sets of equations can be encoded into  spaces of solenoidal functions for $p\in (1,\infty)$ where we include the parity conditions
\begin{align*}
L^p_{\sigma, \varepsilon} (\Omega )&=\overline{\{u=(v,w)\in C_{per}^\infty (\overline{\Omega })^3\colon\divergence_H v + \tfrac{1}{\varepsilon}\partial_zw =0, \hbox{$v$ even in $z$ and $w$ odd in $z$}\}}^{\Vert \cdot\Vert _{L^p}}, \quad \varepsilon>0,\\
L^p_{\overline{\sigma }} (\Omega )&=\overline{\{v\in C_{per}^\infty (\overline{\Omega })^2:\divergence _H\overline{v} =0, \hbox{$v$ even in $z$}\}}^{\Vert \cdot\Vert _{L^p}}, \\
L^p_{\sigma} (G )&=\overline{\{\overline{v}\in C_{per}^\infty (\overline{G })^2:\divergence _H\overline{v} =0\}}^{\Vert \cdot\Vert _{L^p}}, 
\end{align*}
setting for brevity  $L^p_{\sigma} (\Omega ):=L^p_{\sigma, 1} (\Omega )$. Consider also the scaled Helmholtz projection 
\begin{align*}
\IP_{\varepsilon}\colon \{u=(v,w)\in L^p(\Omega)^3\colon\hbox{$v$ even in $z$ and $w$ odd in $z$}\} \rightarrow L^p_{\sigma, \varepsilon} (\Omega ), \quad \varepsilon>0,
\end{align*}
where we set $\IP:=\IP_{1}$, the hydrostatic and the $2D$-Helmholtz projections
\begin{align*}
\IP_{\overline{\sigma}}&\colon \{v\in L^p(\Omega)^2\colon\hbox{$v$ even in $z$ and $w$ odd in $z$}\} \rightarrow L^p_{\overline{\sigma}} (\Omega ), \\
\IP_{G}&\colon  L^p(G)^2 \rightarrow L^p_{\sigma} (G ).
\end{align*}


\subsection{Maximal $L^2_t$-$L^2_x$-regularity spaces}
We set
\begin{eqnarray*}
X_0 :=L^2(\Omega ), & &X_1:=H^{2}_{per}(\Omega ),\\
X^v_0:=L^2_{\overline{\sigma}}(\Omega ), & &X^v_1:=H^{2}_{per}(\Omega )^2 \cap L^2_{\overline{\sigma}}(\Omega ), \\
X_{0,\varepsilon}^u:= L^2_{\sigma,\varepsilon} (\Omega ),
& &X_{1,\varepsilon}^u:= H^{2}_{per}(\Omega)^3\cap L^2_{\sigma,\varepsilon} (\Omega ), \quad \varepsilon>0
\end{eqnarray*}
where we use in the case $\varepsilon=1$ the short hand notations $X_{0}^u$ and $X_{1}^u$.
For 
$t, T\in [0, \infty]$ we also define the maximal regularity spaces 
\[
\IE _0(t, T) := L^2(t,T;X_0),\quad \IE _1(t, T) := L^2(t,T;X_1)\cap H^{1}(t,T; X_0),
\]
and analogously $\IE _i^v(t, T)$ and $\IE _i^u(t, T)$ with respect to $X_i^v$ and $X_i^u$, respectively ($i = 0, 1$).
To describe the regularity of the pressure, we introduce 
\begin{align*}
\IE_1^p(t, T) := \{ p \in L^2(t, T; H_{per}^{1}(\Omega)) \colon \int_\Omega p(s) \, dx = 0 \text{ for a.e.\ } s \in (t, T), \quad p \hbox{ even in }z\}.
\end{align*}
To simplify our notation, we sometimes write only $\IE_i(t, T)$ without superscripts and $\IE_i(T)$ when $t = 0$. 
To consider $\delta$-dependent norms, we set 
\begin{align*}
\IE_{H,\delta}(t,T):=\IE_1(t,T) \quad \hbox{with} \quad \norm{u}_{\IE_{H,\delta}}:=  \norm{u}_{\IE_{0}}+\norm{\partial_t u}_{\IE_{0}}+\norm{\Delta_\delta u}_{\IE_{0}}, \quad \delta>0.
\end{align*}


Then, consider the traces spaces given by  the real interpolation functor $(\cdot,\cdot)_{\theta,q}$  as
\begin{align*}
X_\gamma :=(X_0,X_1)_{1/2,2},\quad
X^u_\gamma :=(X_{0,\varepsilon}^u,X_{1,\varepsilon}^u)_{1/2,2},\quad \hbox{and} \quad X^v_\gamma :=(X_0^v,X_1^v)_{1/2,2},
\end{align*}
where when there is no ambiguity we use the short hand notation $X_\gamma$ for each of the three.  
Following the lines of  \cite[Section 4]{HieberKashiwabaraHussein2016} and \cite{GigaGriesHieberHusseinKashiwabara2017} the trace or initial value spaces
can be characterized as follows, where one uses that $B^s_{2,2}=H^s$.
\begin{lemma}[Characterization of the initial value spaces]
	Let $p,q\in (1,\infty )$ and $\varepsilon>0$. Then 
	\begin{align*}
		X_\gamma =
 H^{1}_{per}(\Omega ), \quad
		X^u_\gamma =
H^{1}_{per}(\Omega )^3\cap L^2_{\sigma,\varepsilon} (\Omega ), \quad \hbox{and} \quad 
		X^v_\gamma =
 H^{1}_{per}(\Omega )^2\cap L^2_{\overline{\sigma}} (\Omega ).
	\end{align*}
\end{lemma}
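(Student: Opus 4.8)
The plan is to reduce everything to the single scalar interpolation identity $(L^2(\Omega),H^2_{per}(\Omega))_{1/2,2}=H^1_{per}(\Omega)$ and then to transport it to the three solenoidal settings by a retraction/coretraction argument based on the relevant Helmholtz projections, exactly along the lines of \cite[Section 4]{HieberKashiwabaraHussein2016} and \cite{GigaGriesHieberHusseinKashiwabara2017}.

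First I would settle the scalar case $X_\gamma=H^1_{per}(\Omega)$. Since the periodic Bessel potential and Besov spaces are defined via Fourier series, the classical real-interpolation identity $(H^{s_0,p}_{per},H^{s_1,p}_{per})_{\theta,q}=B^{s}_{p,q,per}$ with $s=(1-\theta)s_0+\theta s_1$ carries over verbatim to the torus. Taking $s_0=0$, $s_1=2$, $\theta=1/2$, $p=q=2$ and using the already-noted identity $B^1_{2,2,per}=H^1_{per}$ gives $X_\gamma=(X_0,X_1)_{1/2,2}=H^1_{per}(\Omega)$.

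Second, for $X^u_\gamma$ and $X^v_\gamma$ I would use that in the periodic $L^2$-framework the scaled Helmholtz projection $\IP_\varepsilon$ and the hydrostatic projection $\IP_{\overline{\sigma}}$ are matrix-valued Fourier multipliers: they are orthogonal projections on $L^2$ that commute with all derivatives and preserve the even/odd parity in $z$. Hence $\IP_\varepsilon$ is a bounded projection on both $L^2(\Omega)^3$ and $H^2_{per}(\Omega)^3$, with ranges exactly $X^u_{0,\varepsilon}=L^2_{\sigma,\varepsilon}(\Omega)$ and $X^u_{1,\varepsilon}=H^2_{per}(\Omega)^3\cap L^2_{\sigma,\varepsilon}(\Omega)$, and similarly for $\IP_{\overline{\sigma}}$. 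A projection bounded on both endpoints is a retraction, so the real interpolation functor commutes with it and
\begin{align*}
X^u_\gamma=(\IP_\varepsilon X_0^3,\IP_\varepsilon X_1^3)_{1/2,2}=\IP_\varepsilon\,(X_0,X_1)_{1/2,2}^3=\IP_\varepsilon\,H^1_{per}(\Omega)^3=H^1_{per}(\Omega)^3\cap L^2_{\sigma,\varepsilon}(\Omega),
\end{align*}
the last step using that $\IP_\varepsilon$ leaves $H^1_{per}$ invariant and projects onto its solenoidal part. The identical argument with $\IP_{\overline{\sigma}}$ in place of $\IP_\varepsilon$ (now acting on two components) yields $X^v_\gamma=H^1_{per}(\Omega)^2\cap L^2_{\overline{\sigma}}(\Omega)$.

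The main point to nail down is the boundedness of $\IP_\varepsilon$ and $\IP_{\overline{\sigma}}$ on the higher-order space $H^2_{per}$ together with the exact identification of their ranges, and their compatibility with the parity constraints in $z$. In the periodic setting this is the clean part, since the projections are explicit Fourier multipliers commuting with derivatives; the only genuine verification is that they respect the even/odd symmetries, after which the standard retraction theorem for real interpolation (see e.g. \cite{Triebel}) closes the argument.
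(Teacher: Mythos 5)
Your proposal is correct and follows essentially the route the paper intends: the paper gives no written proof but refers to \cite[Section 4]{HieberKashiwabaraHussein2016} and \cite{GigaGriesHieberHusseinKashiwabara2017} together with the identity $B^s_{2,2}=H^s$, which is exactly the combination of the scalar real-interpolation identity $(L^2,H^2_{per})_{1/2,2}=B^1_{2,2,per}=H^1_{per}$ with a retraction argument via the (scaled/hydrostatic) Helmholtz projections acting as parity-preserving Fourier multipliers bounded on both endpoint spaces. Your write-up supplies the standard details of precisely that argument, so there is nothing to add.
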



\subsection{Anisotropic function spaces}\label{subsec:aniiso}

The anisotropic structure of the
primitive equations motivates the definition of the anisotropic Bessel potential spaces for $r,s\geq 0$ and $q,p\in [1,\infty]$
\begin{align*}
H^{r,q}_{z}H^{s,p}_{xy}:=H^{r,q}(-1,1;H^{s,p}_{per}(G)), \hbox{ where }  H^{s,p}_{xy}:=H^{s,p}_{per}(G), \quad H^{r,q}_{z}:=H^{r,q}(-1,1),
\end{align*} 
with the norms
\begin{align*}
\norm{v}_{H^{r,q}_{z}H^{s,p}_{xy}} = \big\Vert\norm{v(\cdot,\cdot,z)}_{H_{xy}^{s,p}} \big\Vert_{H^{r,q}_{z}}. 
\end{align*}
For these spaces  
there is an
anisotropic Hölder's inequality, that is,  for $\tfrac{1}{p}=\tfrac{1}{p_1}+\tfrac{1}{p_2}$ and
$\tfrac{1}{q}=\tfrac{1}{q_1}+\tfrac{1}{q_2}$ 
\begin{align}\label{eq:Holder}
\norm{v u}_{L^{q}_{z}L^{p}_{xy}} 
\leq  
\big\Vert\norm{v(\cdot,\cdot,z)}_{L_{xy}^{p_1}}\norm{u(\cdot,\cdot,z)}_{L_{xy}^{p_2}} \big\Vert_{L^{r,q}_{z}} = 
\norm{v}_{L^{q_1}_{z}L^{p_1}_{xy}}\norm{u}_{L^{q_2}_{z}L^{p_2}_{xy}}.
\end{align}
Also the two-dimensional  Ladzhenskaya inequality carries over to 
\begin{align}\label{eq:Ladyzhenskaya}
\norm{v}_{L^{2}_{z}L^{4}_{xy}}^2 
\leq  
\int_{-1}^1\norm{v(\cdot,\cdot,z)}_{L_{xy}^{2}}\norm{v(\cdot,\cdot,z)}_{H^1_{xy}} dz 
\leq 
\norm{v}_{L^{2}_{z}L^{2}_{xy}}\norm{v}_{L^{2}_{z}H^{1}_{xy}}.
\end{align}
To account for the anisotropic time-space setting, consider 
\begin{align*}
\IE_H(t,T)&:=L^2(t,T;L_z^2H_{xy}^2) \cap  H^1(0,T;L^2(\Omega)), \\
\IE_z(t,T)&:= L^2(t,T;H_z^1H_{xy}^1)\cap L^\infty(0,T;H_z^1L_{xy}^2), \\
\IE_{z,H}(t,T)&:= \IE_{z}(t,T)\cap \IE_{H}(t,T)
\end{align*}
with $\delta$-independent norms. 
For $\delta>0$ set $\IE_{H,\delta}(t,T):=\IE_{1}(t,T)$ with $\delta$-dependent norm 
\begin{align*}
\norm{u}_{\IE_{H,\delta}(t,T)}:= \norm{u}_{L^2(t,T;L^2(\Omega))} + 
\norm{\partial_t u}_{L^2(t,T;L^2(\Omega))}
+\norm{\Delta_\delta u}_{L^2(t,T;L^2(\Omega))}.
\end{align*}
Note that 
\begin{align}\label{eq:HdembeddingH}
\norm{u}_{\IE_{H}(t,T)} \leq \norm{u}_{\IE_{H,\delta}(t,T)}\quad\hbox{for all } \delta>0 \hbox{ and } u\in \IE_{H,\delta}(t,T).
\end{align}
Motivated by the anisotropic diffusion and transport behaviour of the primitive equations with horizontal viscosity, cf.  \cite{CaoLiTiti_PHE_2017, HusseinSaalWrona},  and one sets for $\eta>0$ 
\begin{align}\label{eq:H1eta}
\begin{split}
H^{1}_{\bar\sigma, \eta}(\Omega)&:= \{ v\in H^1(\Omega)^2 \cap L^2_{\bar\sigma}(\Omega)\cap L^\infty(\Omega)\colon  \norm{v}_{H^1} + \norm{v}_{L^\infty}+ \norm{\partial_z v}_{L^{2+\eta}}<\infty \} \quad \hbox{with} \\
 \norm{v}_{H^1_{\bar\sigma, \eta}}&:=  \norm{v}_{H^1} + \norm{v}_{L^\infty}+ \norm{\partial_z v}_{L^{2+\eta}}.
 \end{split}
\end{align}


\section{Main results}\label{sec:main}
In the following we state our main results on  strong solutions and their convergence under the hydrostatic approximation. 
We say that 
$u_{\varepsilon,\delta}$ is a \emph{strong solution to the scaled anisotropic Navier-Stokes equations} \eqref{eq:NS_eps} for $\varepsilon,\delta>0$  on $(0,T)$ in the $L^2$-$L^2$-setting, if $u_{\varepsilon,\delta}\in \IE _1^u(T)$ and there exists $p_{\varepsilon,\delta}\in \IE_1^p(T)$ such that \eqref{eq:NS_eps} holds almost everywhere.
\begin{proposition}[Local existence for \eqref{eq:NS_eps}]\label{prop:NS_eps}
	Let $T\in (0,\infty)$,
	\begin{align*}
\varepsilon,\delta>0,\quad \hbox{and}\quad  (u_{\varepsilon,\delta})_0\in 
H^{1}_{per}(\Omega )^3\cap L^2_{\sigma} (\Omega ). 
	\end{align*}
	Then there exists a maximal existence time $T_{\varepsilon,\delta}=T_{\varepsilon,\delta}((u_{\varepsilon,\delta})_0)\in (0,T]$ such that there exits a unique  strong solution $u_{\varepsilon,\delta}\in \IE^u_{1}(T_{\varepsilon,\delta})$ to~\eqref{eq:NS_eps}.
\end{proposition}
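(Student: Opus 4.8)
The plan is to recast \eqref{eq:NS_eps} as an abstract semilinear evolution equation on the solenoidal space $X_0^u=L^2_\sigma(\Omega)$ and to solve it by maximal $L^2$-regularity together with a contraction argument; since $\varepsilon,\delta>0$ are fixed throughout, all constants are allowed to depend on them. First I would eliminate the pressure. The pressure enters \eqref{eq:NS_eps} through the \emph{anisotropic} gradient $(\nabla_H p,\tfrac{1}{\varepsilon^2}\partial_z p)$ rather than the full gradient, but an integration by parts shows that, with respect to the weighted inner product $\langle(v,w),(\phi,\psi)\rangle_\ast:=\int_\Omega v\cdot\phi+\varepsilon^2\int_\Omega w\,\psi$, this term is orthogonal to $L^2_\sigma(\Omega)$: the weight $\varepsilon^2$ cancels the factor $\tfrac{1}{\varepsilon^2}$, leaving $\int_\Omega\nabla_H p\cdot\phi+\partial_z p\,\psi=-\int_\Omega p\,(\divergence_H\phi+\partial_z\psi)=0$ for solenoidal $(\phi,\psi)$. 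For fixed $\varepsilon$ the form $\langle\cdot,\cdot\rangle_\ast$ is equivalent to the standard $L^2$ inner product, so the associated Leray projection $\IP$ onto $L^2_\sigma(\Omega)$ is bounded on the periodic Bessel potential scale, and applying it removes the pressure, turning \eqref{eq:NS_eps} into
\[
\partial_t u_{\varepsilon,\delta}+A_{\varepsilon,\delta}\,u_{\varepsilon,\delta}=F(u_{\varepsilon,\delta}),\qquad u_{\varepsilon,\delta}(0)=(u_0)_{\varepsilon,\delta},
\]
with $A_{\varepsilon,\delta}:=-\IP\Delta_\delta$ on the domain $X_1^u=H^2_{per}(\Omega)^3\cap L^2_\sigma(\Omega)$ and $F(u):=-\IP(u\cdot\nabla u)$. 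For fixed $\delta>0$ the operator $-\Delta_\delta$ is uniformly elliptic and comparable to $-\Delta$, so, following \cite{HieberKashiwabaraHussein2016,GigaGriesHieberHusseinKashiwabara2017,Hieber2016}, $A_{\varepsilon,\delta}$ is sectorial and generates a bounded analytic semigroup on the Hilbert space $X_0^u$; by de Simon's theorem it therefore enjoys maximal $L^2$-regularity, and the solution operator of the associated linear problem is bounded on $\IE_1^u(T)$ uniformly for $T\le T_0$.

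Next I would run the fixed-point scheme. By the characterization of the trace spaces given above, $X_\gamma^u=(X_0^u,X_1^u)_{1/2,2}=H^1_{per}(\Omega)^3\cap L^2_\sigma(\Omega)$, so the hypothesis $(u_0)_{\varepsilon,\delta}\in H^1_{per}(\Omega)^3\cap L^2_\sigma(\Omega)$ places the initial value in the correct trace space. Let $u^\ast\in\IE_1^u(T)$ solve the linear problem with right-hand side $0$ and this initial value, and define $\Phi(u)$ as the maximal-regularity solution of $\partial_tU+A_{\varepsilon,\delta}U=F(u)$, $U(0)=(u_0)_{\varepsilon,\delta}$; a strong solution of \eqref{eq:NS_eps} is precisely a fixed point of $\Phi$. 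The decisive ingredient is the bilinear estimate
\[
\|u\cdot\nabla\tilde u\|_{L^2(0,T;L^2(\Omega))}\le C\,T^{1/4}\,\|u\|_{\IE_1^u(T)}\,\|\tilde u\|_{\IE_1^u(T)},
\]
which I would derive from the embeddings $\IE_1^u(T)\hookrightarrow C([0,T];X_\gamma^u)\hookrightarrow C([0,T];L^6(\Omega))$ and $\IE_1^u(T)\hookrightarrow L^2(0,T;H^2_{per}(\Omega))$: writing $\|u\cdot\nabla\tilde u\|_{L^2(\Omega)}\le\|u\|_{L^6}\|\nabla\tilde u\|_{L^3}$, interpolating $\|\nabla\tilde u\|_{L^3}\le C\|\tilde u\|_{H^1}^{1/2}\|\tilde u\|_{H^2}^{1/2}$, and applying Hölder in time produces the gain $T^{1/4}$. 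Combined with maximal regularity and the analogous estimate for $F(u)-F(\tilde u)$, this shows that on a suitable ball in $\IE_1^u(T)$ centred at $u^\ast$ the map $\Phi$ is a contractive self-map once $T$ is small, giving a unique local strong solution. Uniqueness on the whole interval and the existence of a maximal time $T_{\varepsilon,\delta}\in(0,T]$ then follow from the usual continuation argument.

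The embeddings, the interpolation step and the continuation argument are routine. The main obstacle I expect is the first step: verifying that the anisotropic, $\varepsilon$-scaled pressure gradient is annihilated by a bounded projection onto $L^2_\sigma(\Omega)$ and that the resulting Stokes operator $A_{\varepsilon,\delta}$ is sectorial with the analytic semigroup needed for maximal $L^2$-regularity. Once this is in place, the $3D$-critical nonlinear estimate is classical Fujita--Kato theory transcribed to the maximal-$L^2$-regularity framework, with the factor $T^{1/4}$ supplying the smallness required for the contraction.
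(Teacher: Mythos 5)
Your proposal is correct and takes essentially the same route as the paper, which proves Proposition~\ref{prop:NS_eps} in one line by combining the uniform maximal $L^2$-regularity estimate for the scaled anisotropic Stokes equations (Proposition~\ref{prop:maxreg}, quoted from \cite{convergence}) with the bilinear estimate of Lemma~\ref{lemma:NavierStokes_nonlinear} and the contraction mapping principle. Your weighted-inner-product elimination of the anisotropic pressure gradient and the explicit $T^{1/4}$ gain in the Fujita--Kato-type nonlinear estimate are simply more detailed implementations of these same two ingredients.
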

 The proof of this local well-posedness result follows e.g. with Lemma~\ref{lemma:NavierStokes_nonlinear}  below. Of course, there are much more refined results on the Navier-Stokes equations in $L^q_t$-$L^p_x$-spaces, cf. e.g. \cite{MR3808592, MR3721420, MR3722064}, however, Proposition~\ref{prop:NS_eps} is sufficient  for our  comparison arguments.

We say that $u=(v,w)$ is a \emph{strong solution to the primitive equations with horizontal viscosity} \eqref{eq_PEH} on $(0,T)$ if there is $v\in \IE_z(T)\cap \IE_H(T)$ and there exists $p \in \IE_1^p(T)$ such that \eqref{eq_PEH} holds almost everywhere. 
The existence of such global strong solutions to \eqref{eq_PEH} has been proven by Li and Titi in \cite{CaoLiTiti_PHE_2017}, compare also \cite{HusseinSaalWrona} for a slightly different setting. The higher regularity of these solutions and the proofs of the following results are discussed in Section~\ref{sec:additional_reg}.


\begin{proposition}[Global existence for \eqref{eq_PEH} and higher regularity]\label{prop:PE_H}
Let $T\in (0,\infty)$ and $\eta>0$.
\begin{enumerate}[(a)]
	\item If $v_0\in H_{\bar\sigma, \eta}^1(\Omega)$, then there exists a unique strong solution to \eqref{eq_PEH} with
	\begin{align*}
	v\in \IE_H(T) \cap \IE_z(T).
	\end{align*}
	\item If in addition to the conditions in (a) one has $\partial_z^2 v_0\in L^2(\Omega)^2$ and $w_0=w(v_0)\in H^1(\Omega)$, then 
	\begin{align*}
	v\in \IE_1^v(T) \cap L^\infty(0,T;H^2_zL^2_{xy}) \quad \hbox{and} \quad w(v)\in \IE_1(T).
	\end{align*}
	\item If in addition to the conditions in (a) one has $\partial_z v_0\in H^1(\Omega)^2$,  then 
	\begin{align*}
v\in L^4(0,T; H_z^1H_{xy}^{1,4}).
	\end{align*}  
		\item If in addition to the conditions in (a) one has  $\partial_z^3 v_0\in L^2(\Omega)^2$, then 
	\begin{align*}
	v\in L^\infty(0,T;H^3_zL^2_{xy}) \cap L^2(0,T;H^3_zH^1_{xy}) .
	\end{align*}  
\end{enumerate}
\end{proposition}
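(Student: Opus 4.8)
The plan is to treat part~(a) as known and to derive (b)--(d) as propagation-of-regularity statements for the global solution furnished by (a). The governing principle is that \eqref{eq_PEH} is parabolic only in the horizontal variables, so I would differentiate the equation in $z$, exploit that the pressure gradient drops out (since $\partial_z p=0$ forces $\nabla_H\partial_z p=0$), and estimate the resulting horizontally-parabolic transport equations by anisotropic energy methods. For part~(a) itself I would invoke the global strong well-posedness of \eqref{eq_PEH} due to Cao, Li and Titi \cite{CaoLiTiti_PHE_2017} together with its adaptation in \cite{HusseinSaalWrona}; the space $H^1_{\bar\sigma,\eta}(\Omega)$ of \eqref{eq:H1eta} is designed to match that framework, the extra integrability $\partial_z v_0\in L^{2+\eta}$ being precisely what closes the borderline estimates in the absence of vertical viscosity.

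\textbf{Part (b).} Set $\theta:=\partial_z v$. Differentiating the $v$-equation of \eqref{eq_PEH} in $z$ and using $\partial_z w=-\divergence_H v$ (from \eqref{eq:w}) yields the transport-diffusion equation
\[
\partial_t\theta+u\cdot\nabla\theta-\Delta_H\theta=-(\theta\cdot\nabla_H)v+(\divergence_H v)\,\theta,
\]
which carries no pressure term. Testing with $\theta$, the transport term vanishes by $\divergence u=0$ and the parity conditions, the horizontal diffusion produces $\norm{\nabla_H\theta}_{L^2}^2$, and the right-hand side is controlled by the anisotropic Hölder and Ladyzhenskaya inequalities \eqref{eq:Holder}, \eqref{eq:Ladyzhenskaya} together with the regularity of $v$ from part~(a); this gives $\theta\in L^\infty(0,T;L^2)\cap L^2(0,T;H^1_{xy})$. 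Differentiating once more in $z$ and repeating the energy estimate (now using the control just obtained on $\theta$ and the compatibility $\partial_z^2 v_0\in L^2$) gives $\partial_z^2 v\in L^\infty(0,T;L^2)$, i.e. $v\in L^\infty(0,T;H^2_zL^2_{xy})$. Combining the bounds on $\partial_z^2 v$ and $\partial_z\nabla_H v$ with the horizontal $H^2$-smoothing in $(x,y)$ yields $v\in L^2(0,T;H^2(\Omega)^2)$; reading $\partial_t v$ off the equation (the hydrostatic pressure being recovered from $v$ by horizontal elliptic regularity) then gives $v\in\IE_1^v(T)$. Finally $w(v)=-\int_{-1}^z\divergence_H v$ inherits this regularity fibrewise, since horizontal derivatives pass through the integral and $\partial_z w=-\divergence_H v$, so $w(v)\in\IE_1(T)$.

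\textbf{Parts (c) and (d).} These follow the same scheme at higher order. For (c), the stronger hypothesis $\partial_z v_0\in H^1$ allows one to estimate $\theta=\partial_z v$ in a finer norm: applying the anisotropic Ladyzhenskaya inequality \eqref{eq:Ladyzhenskaya} fibrewise to the quantities governed by the $\theta$-equation upgrades the $L^\infty(0,T;L^2)\cap L^2(0,T;H^1_{xy})$ control to the mixed-norm bound $v\in L^4(0,T;H^1_zH^{1,4}_{xy})$, the $L^4$-in-time exponent being the natural Ladyzhenskaya scaling. For (d), differentiating \eqref{eq_PEH} three times in $z$ and running the energy estimate with the lower-order controls from (b)--(c) in hand gives $\partial_z^3 v\in L^\infty(0,T;L^2)\cap L^2(0,T;H^1_{xy})$, which together with the lower-order bounds is the asserted $H^3_z$-regularity. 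In each case the formal manipulations are justified on the genuine solution by carrying them out on a regularised problem (mollified or Galerkin-truncated data) and passing to the limit via the uniform estimates and the uniqueness from part~(a).

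\textbf{Main obstacle.} The decisive difficulty, common to (b)--(d), is closing the nonlinear estimates for the vertical-derivative equations without any vertical diffusion to absorb them. The structural facts that make this possible are that the vertical velocity is controlled by horizontal derivatives of $v$ through $\partial_z w=-\divergence_H v$, and that the $z$-direction can be handled separately from $(x,y)$ via fibrewise two-dimensional Ladyzhenskaya estimates; the extra integrability $\partial_z v\in L^{2+\eta}$ built into $H^1_{\bar\sigma,\eta}$ is exactly what renders the critical terms summable.
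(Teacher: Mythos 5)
Your treatment of (a), (c), (d) and of the $v$-part of (b) is essentially the paper's: part (a) is quoted from Cao--Li--Titi, and the higher vertical regularity is obtained exactly as in Propositions~\ref{prop:HNL2} and~\ref{prop:L2L4} by applying $\partial_z^r$ to the velocity equation (the pressure term dropping out), testing, using the anisotropic H\"older/Ladyzhenskaya inequalities and Gr\"onwall, with the $L^4_tH^1_zH^{1,4}_{xy}$ bound in (c) coming from fibrewise Ladyzhenskaya interpolation of $L^\infty_tH^1_zH^1_{xy}\cap L^2_t H^1_zH^2_{xy}$.

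There is, however, a genuine gap in your last step of part (b), the claim that $w(v)\in\IE_1(T)$ because $w(v)=-\int_{-1}^z\divergence_H v$ ``inherits the regularity of $v$ fibrewise.'' It does not: $w$ costs one horizontal derivative of $v$. From $v\in\IE_1^v(T)=L^2(0,T;H^2)\cap H^1(0,T;L^2)$ you get $\partial_z w=-\divergence_H v\in L^2(0,T;H^1)$, but $\nabla_H^2 w=-\int_{-1}^z\nabla_H^2\divergence_H v\,d\xi$ requires three horizontal derivatives of $v$, and $\partial_t w=-\int_{-1}^z\divergence_H\partial_t v\,d\xi$ requires $\nabla_H\partial_t v\in L^2$; neither is controlled by $v\in\IE_1^v(T)$. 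So membership of $w(v)$ in $L^2(0,T;H^2)\cap H^1(0,T;L^2)$ cannot be read off from the formula. The paper closes this by a different device: applying $\int_{-1}^z-\divergence_H$ to the $v$-equation and adding $-\partial_z^2w=\partial_z\divergence_H v$ to both sides produces a genuine heat equation for $w$, namely \eqref{eq:omega}, whose right-hand side lies in $\IE_0(T)$ thanks to the already-established regularity of $v$; maximal regularity (Lemma~\ref{lemma:regw}) then yields a solution $\omega\in\IE_1(T)$, and a separate energy/Gr\"onwall uniqueness argument identifies $\omega$ with $w(v)$. Some substitute for this step (an evolution equation for $w$ plus an identification argument, or a direct elliptic argument giving three horizontal derivatives of $v$ and $\nabla_H\partial_t v$ in $L^2$) is needed; without it the assertion $w(v)\in\IE_1(T)$, which is used later to control the forcing terms in the difference equation, is unproved.
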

Note that $v_0, \partial_z v_0\in H^1(\Omega)^2\cap L^\infty(\Omega)$ implies already $v_0\in H_{\bar\sigma, \eta}^1(\Omega)$ for some $\eta>0$, cf. \eqref{eq:H1eta} for the definition of this space.   
Considering the difference \eqref{eq_Diff} between \eqref{eq:NS_eps} and \eqref{eq_PEH}, we can state our first main result the proof of which is given in Section~\ref{sec:quadratic_inequality}.
\begin{theorem}[Convergence for $\varepsilon\to 0$ and $\delta \to 0$]\label{thm:main1}
Let $T\in (0,\infty)$ and
\begin{align*}
u_0=(v_0,w_0)\in H^1(\Omega)\cap L^2_\sigma(\Omega) \quad \hbox{with}\quad v_0\in L^\infty(\Omega),\quad \partial_z v_0\in H^1(\Omega)^2, \quad \partial_z^2v_0, \partial_z^3v_0\in L^2(\Omega)^2. 
\end{align*}
\begin{enumerate}[(1)]
	\item Let $u=(v,w)$ be the solution to \eqref{eq_PEH}  referred to in Proposition~\ref{prop:PE_H}, and 
	\item for $\varepsilon,\delta>0$ let
	$u_{\varepsilon,\delta}$  be the solution to \eqref{eq:NS_eps} referred to in Proposition~\ref{prop:NS_eps}. 
\end{enumerate}
Then,
\begin{enumerate}[(a)]
	\item there exists a constant $c>0$ such that  one has for the existence time $T_{\varepsilon,\delta}$ of $u_{\varepsilon,\delta}$
	\begin{align*}
	T_{\varepsilon,\delta}=T \quad \hbox{for}\quad \varepsilon, \delta <c;
	\end{align*}
	\item there exist constants $C,c>0$ independent of $\varepsilon, \delta$ such that for $\varepsilon, \delta<c$
	\begin{align*}
\norm{(v_{\varepsilon,\delta} -v,\varepsilon(w_{\varepsilon,\delta} -w))}_{\IE_{H,\delta}(T)}
+\norm{(v_{\varepsilon,\delta} -v,\varepsilon(w_{\varepsilon,\delta} -w))}_{\IE_{z}(T)} \leq C(\varepsilon+\delta).
\end{align*}
\end{enumerate} 
\end{theorem}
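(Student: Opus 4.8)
The plan is to work with the difference system \eqref{eq_Diff} and to set up a quadratic inequality for the quantity
\[
N(T') := \norm{U_{\varepsilon,\delta}}_{\IE_{H,\delta}(T')} + \norm{U_{\varepsilon,\delta}}_{\IE_z(T')}, \qquad U_{\varepsilon,\delta}=(V_{\varepsilon,\delta},W_{\varepsilon,\delta}),
\]
on subintervals $(0,T')$ with $T'\le T_{\varepsilon,\delta}$, and then to run a continuity argument in $T'$ as in Section~\ref{sec:quadratic_inequality}. First I would apply the scaled Helmholtz projection $\IP_\varepsilon$ to \eqref{eq_Diff} to eliminate $P_{\varepsilon,\delta}$ and rewrite it as an abstract Cauchy problem $\partial_t U_{\varepsilon,\delta} + A_{\varepsilon,\delta}U_{\varepsilon,\delta} = \IP_\varepsilon F_{\varepsilon,\delta}(U_{\varepsilon,\delta})$ with zero initial datum, where $A_{\varepsilon,\delta}=-\IP_\varepsilon\Delta_\delta$ is the associated Stokes operator and $F_{\varepsilon,\delta}$ is given by \eqref{eq:F}, \eqref{eq:Fdivform}. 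Using the divergence constraint in the form \eqref{eq:Weps}, the vertical component $\tfrac1\varepsilon W_{\varepsilon,\delta}$ is expressed through horizontal derivatives of $V_{\varepsilon,\delta}$, so that all $\tfrac1\varepsilon$-factors are controlled and no genuine loss in $\varepsilon$ occurs.

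The linear estimate is the heart of the argument. I would show that the linearised diffusion-transport operator $\partial_t - \Delta_\delta + (u\cdot\nabla)$, together with the zero-order coupling through $\nabla v$, enjoys maximal $L^2$-regularity on $(0,T')$ with a constant $C=C(u,T)$ that is uniform in $\varepsilon,\delta$ for $\varepsilon,\delta<c$. In the Hilbert-space setting the self-adjoint Stokes part $\partial_t-\Delta_\delta$ has maximal $L^2$-regularity with a constant independent of $\delta$, and the advection and $\nabla v$-terms are absorbed by a Gronwall argument exploiting the regularity of $u$ from Proposition~\ref{prop:PE_H}; this controls the $\IE_{H,\delta}(T')$-norm, and by \eqref{eq:HdembeddingH} also the $\delta$-independent $\IE_H$-norm. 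For the $\IE_z(T')$-norm I would differentiate the difference equation in $z$ and estimate $\partial_z U_{\varepsilon,\delta}$: here the vertical diffusion is only of strength $\delta$, so parabolic smoothing is unavailable and one must instead use the transport structure in $z$ to propagate $H^1_z$-regularity, testing the $\partial_z$-differentiated equation against $\partial_z V_{\varepsilon,\delta}$ and invoking the anisotropic Hölder and Ladyzhenskaya inequalities \eqref{eq:Holder}, \eqref{eq:Ladyzhenskaya} together with the bounds on $\partial_z v,\partial_z^2 v,\partial_z^3 v$ from Proposition~\ref{prop:PE_H}(b)--(d).

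Next I would estimate the right-hand side $F_{\varepsilon,\delta}$, split according to \eqref{eq:Fdivform} into source terms, linear terms, and genuinely quadratic terms. The source terms $\delta\partial_z^2 v$ and $-\varepsilon(\partial_t w - u\cdot\nabla w + \Delta_\delta w)$ are independent of $U_{\varepsilon,\delta}$ and are bounded by $C(\varepsilon+\delta)$ in the relevant $L^2$-norms precisely because of the additional regularity of $v$ and $w=w(v)$ provided by Proposition~\ref{prop:PE_H}; this is where the hypotheses $\partial_z^2 v_0,\partial_z^3 v_0\in L^2$ enter. The linear-in-$U$ terms carry coefficients built from $u$ and are absorbed into the constant $C(u,T)$ of the linear estimate, while the quadratic terms $(V_{\varepsilon,\delta},\tfrac1\varepsilon W_{\varepsilon,\delta})\cdot\nabla V_{\varepsilon,\delta}$ are estimated by $C\,N(T')^2$ using the same anisotropic inequalities. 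Combining the linear estimate with these bounds yields the quadratic inequality
\[
N(T') \le K(\varepsilon+\delta) + K\,N(T')^2, \qquad K=K(u,T)\ \text{independent of }\varepsilon,\delta.
\]

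Finally I would close the argument by a continuity/bootstrap method: for $\varepsilon,\delta<c$ small enough that $4K^2(\varepsilon+\delta)<1$, the set of $T'\le T_{\varepsilon,\delta}$ for which $N(T')\le 2K(\varepsilon+\delta)$ is nonempty, relatively closed and open, hence exhausts $[0,T_{\varepsilon,\delta})$, giving the bound in (b) on every such interval. Since this a priori bound keeps $\norm{u_{\varepsilon,\delta}}\le\norm{u}+N(T')$ finite, the local solution of Proposition~\ref{prop:NS_eps} cannot blow up before $T$, so $T_{\varepsilon,\delta}=T$, which is (a); the bound then persists up to $T$, giving (b). I expect the main obstacle to be the uniform $\IE_z$-estimate: controlling the vertical regularity of the difference when the vertical viscosity $\delta$ degenerates forces one to rely entirely on the transport mechanism in $z$ and on the cancellations in the nonlinearity expressed through \eqref{eq:Weps}, rather than on any smoothing, and it is precisely this step that dictates the regularity demanded of the initial data.
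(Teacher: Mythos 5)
Your outline follows the paper's own strategy almost verbatim: difference system \eqref{eq_Diff}, a combined $\IE_{H,\delta}\cap\IE_z$ quantity, maximal $L^2$-regularity for the scaled Stokes part (Proposition~\ref{prop:maxreg}) for the horizontal norm, a $z$-differentiated transport-type energy estimate for the vertical norm (Proposition~\ref{prop:transport_lin}), the source terms $\delta\partial_z^2v$ and $\varepsilon(\partial_tw-u\cdot\nabla w+\Delta_\delta w)$ supplying the $\mathcal{O}(\varepsilon+\delta)$ smallness, and a continuity argument to close. You also correctly locate where the hypotheses $\partial_z^2v_0,\partial_z^3v_0\in L^2$ enter.

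There is, however, one step that would fail as literally written: the claimed inequality $N(T')\le K(\varepsilon+\delta)+K\,N(T')^2$ with $K=K(u,T)$ and \emph{no} linear term is not what the estimates produce on the whole interval $(0,T)$. The terms $u\cdot\nabla V_{\varepsilon,\delta}$ and $(V_{\varepsilon,\delta},\tfrac1\varepsilon W_{\varepsilon,\delta})\cdot\nabla v$ contribute $C\norm{u}_{\IE_1(T)}\,N(T')$ to the right-hand side (Proposition~\ref{prop:estimate_F}), and $C\norm{u}_{\IE_1(T)}$ is in no way small, so these terms cannot simply be ``absorbed into the constant of the linear estimate'': for the maximal-regularity norm $\norm{\partial_tU}_{L^2}+\norm{\Delta_\delta U}_{L^2}$ there is no Gronwall mechanism, and a one-shot continuity argument with a linear coefficient $\ge 1$ does not close. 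The paper resolves this in Proposition~\ref{prop:QI} by subdividing $[0,T]$ into finitely many subintervals on which the increment $g_2(t,t_1)=C(\norm{u}^2_{\IE_1(t)}-\norm{u}^2_{\IE_1(t_1)})$ is smaller than $\tfrac18$, restarting the estimate on each piece and iterating; this is where the decreasing function $f(t-t_1)$ multiplying the inherited value $X_\eta(t_1)$ comes from. A second, related point you gloss over: in the transport estimate the coefficient $\omega=w+\tfrac1\varepsilon W_{\varepsilon,\delta}$ contains the unknown itself, so the Gronwall factor is $e^{KX}$ with $X$ the quantity being estimated; the quadratic inequality is therefore of the self-referential form $X\le(kX^2+g_2X+\eta g_3)e^{KX+g_1}$, and an extra bootstrap (Lemma~\ref{lemma:qudineq_2}) is needed before the elementary quadratic dichotomy (Lemma~\ref{lemma:qudineq_1}) applies. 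Both issues are repairable by standard means, and indeed Section~\ref{sec:quadratic_inequality} is precisely that repair, but without it the argument you describe does not yield (a) or (b).
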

In particular from (b) and \eqref{eq:HdembeddingH} convergence in $\IE_{H}(T)\cap \IE_z(T)$ follows as $\varepsilon\to 0$ and $\delta\to 0$. Recall that these norms have been introduced in Subsection~\ref{subsec:aniiso}.  Part (a) implies global strong well-posedness of \eqref{eq:NS_eps} for $\varepsilon,\delta$ sufficiently small.  

\begin{remark}[Comparison to the results in \cite{LiTitiGuozhi_2022} by Li, Titi and Yuan]\label{rem:comp}
	The result obtained in \cite{LiTitiGuozhi_2022} deals with the case $\nu_z = \varepsilon^\gamma$ for $\gamma>2$ and convergence in the weak and strong sense in $\varepsilon$-dependent energy norms. For $\delta=\delta_{\varepsilon}:=\varepsilon^{\gamma-2}$, they estimate in \cite[Theorem 1.1]{LiTitiGuozhi_2022} and \cite[Theorem 1.2]{LiTitiGuozhi_2022}
	\begin{align*}
		\norm{(V_{\varepsilon,\delta}, W_{\varepsilon,\delta})}_{L^\infty(0,T;L^2(\Omega))}
		+
		\norm{\nabla_H(V_{\varepsilon,\delta}, W_{\varepsilon,\delta})}_{L^2(0,T;L^2(\Omega))}
		+
		\delta_{\varepsilon}^{1/2}\norm{\partial_z(V_{\varepsilon,\delta}, W_{\varepsilon,\delta})}_{L^2(0,T;L^2(\Omega))} &= \mathcal{O}(\varepsilon + \delta_{\varepsilon}^{1/2}), \\
		\norm{(V_{\varepsilon,\delta}, W_{\varepsilon,\delta}}_{L^\infty(0,T;H^1(\Omega)))}
		+
		\norm{\nabla_H(V_{\varepsilon,\delta}, W_{\varepsilon,\delta})}_{L^2(0,T;H^1(\Omega))}
		+
		\delta_{\varepsilon}^{1/2}\norm{\partial_z(V_{\varepsilon,\delta}, W_{\varepsilon,\delta})}_{L^2(0,T;H^1(\Omega))} &= \mathcal{O}(\varepsilon + \delta_{\varepsilon}^{1/2}),
	\end{align*}	
	respectively,
	where we have translated their result to our convention \eqref{eq:VW}.
The convergence rate is in both cases of order
	\begin{align*}
	\mathcal{O}(\varepsilon + \delta_{\varepsilon}^{1/2})=	\mathcal{O}(\varepsilon^{\beta_1})\quad \hbox{with}\quad \beta_1 = \min \{\gamma/2-1, 1\}.
	\end{align*}
	Here, we estimate slightly differently scaled norms  where we have orders $\delta$ instead of $\delta^{1/2}$ on both the left and the right hand side. More concretely, Theorem~\ref{thm:main1} says that 
	\begin{multline*}
	\norm{(V_{\varepsilon,\delta}, W_{\varepsilon,\delta})}_{H^1(0,T;L^2(\Omega))}
	+
	\norm{\Delta_H(V_{\varepsilon,\delta}, W_{\varepsilon,\delta})}_{L^2(0,T;L^2(\Omega))}
	+
	\delta\norm{\partial_z^2(V_{\varepsilon,\delta},W_{\varepsilon,\delta})}_{L^2(0,T;L^2(\Omega))} \\
	+
		\norm{\partial_z(V_{\varepsilon,\delta}, W_{\varepsilon,\delta})}_{L^\infty(0,T;L^2(\Omega))}
		+
		\norm{\partial_z\nabla_H(V_{\varepsilon,\delta}, W_{\varepsilon,\delta})}_{L^\infty(0,T;L^2(\Omega))}
		= \mathcal{O}(\varepsilon + \delta),
\end{multline*}	
where the  convergence rate in Theorem~\ref{thm:main1} with $\delta =\delta_{\varepsilon}= \varepsilon^{\gamma-2}$ becomes
	\begin{align*}
		\mathcal{O}(\varepsilon+\delta_{\varepsilon})=\mathcal{O}(\varepsilon^{\beta_2}) \quad \hbox{with}\quad \beta_2 = \min \{\gamma-2, 1\}\geq \beta_1.
	\end{align*}
	On the one hand, this means that the convergence rate improves here slightly in particular for  the terms $\norm{(V_{\varepsilon,\delta}, W_{\varepsilon,\delta})}_{H^1(0,T;L^2(\Omega))}$
and
	$\norm{\Delta_H(V_{\varepsilon,\delta}, W_{\varepsilon,\delta})}_{L^2(0,T;L^2(\Omega))}$. Here, more  generally   we include   the limit $\delta\to 0$ independent of $\varepsilon$. On the other hand, we 	 prove only convergence in the strong sense, and we have to impose stronger regularity assumptions on the initial data as compared to $v_0\in H^1$  and $v_0\in H^2$ with $\partial_z v_0\in L^p(\Omega)$ for $p>2$ in Theorem 1.1 and Theorem 1.2 in \cite{LiTitiGuozhi_2022}, respectively.
	
		The proof 
	in \cite{LiTitiGuozhi_2022} relies on sophisticated energy estimates and the factor $\delta_{\varepsilon}^{1/2}$ in the scaled norms seems to originate from testing with unscaled functions. In contrast  
 the proof  given here is based on quadratic norm inequalities discussed  in  Section~\ref{sec:quadratic_inequality}, and therefore the factor $\delta$ in the scaled norms appears naturally considering the difference equation~\eqref{eq_Diff}.  
	The results in \cite{LiTitiGuozhi_2022}
	and the ones presented here have been obtained independently, and parts of the results given here are included in \cite[Chapter 5]{Wrona_PhD}.  	
\end{remark}

We say that
$\overline{v}_{0,\infty}$ is a \emph{strong solution to the $2D$-Navier-Stokes equations} and 
$\tilde{u}_{0,\infty}^{\varepsilon,\delta}$ a solution of the scaled Stokes equations
on $(0,T)$ in the $L^2$-$L^2$-setting, if $\overline{v}_{0,\infty}\in \IE _1(T)$, $\tilde{u}^{\varepsilon,\delta}_{0,\infty}\in \IE _1(T)$, and there exists $\overline{p}_{0,\infty}\in \IE_1^p(T)$ 
and $\tilde{p}_{0,\infty}^{\varepsilon,\delta}\in \IE_1^p(T)$
such that \eqref{eq:NS_eps3} holds almost everywhere. 

\begin{proposition}[Global existence for the $2D$ Navier-Stokes equations]\label{prop:NS_H}
	Let $T\in (0,\infty)$, $\varepsilon,\delta>0$
	\begin{align*}
\overline{v}_0 \in H^1(G)^2\cap L_{\sigma}^2(G)   
	 \quad \hbox{and}\quad 
	 \tilde{u}_0 \in H^1(\Omega) \cap L^2_{\sigma}(\Omega) \quad \hbox{with}\quad \int_{-1}^1 \tilde{u}_0(\cdot,\cdot, \xi) d\xi=0.
	\end{align*} 
	Then  there exists a unique strong solution to \eqref{eq:NS_eps3}
	\begin{align*}
\overline{v}_{0,\infty}\in \IE_1(T)  \quad \hbox{and}\quad \tilde{u}^{\varepsilon,\delta}_{0,\infty}\in \IE_1(T). 
	\end{align*} 
\end{proposition}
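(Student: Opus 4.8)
The plan is to exploit the fact that \eqref{eq:NS_eps3} is \emph{decoupled}: the equation for the barotropic mode $\overline{v}_{0,\infty}$ is the $2D$ Navier-Stokes system on $G$ with no forcing from the baroclinic part, whereas the equations for $\tilde{u}^{\varepsilon,\delta}_{0,\infty}=(\tilde{v}^{\varepsilon,\delta}_{0,\infty},w^{\varepsilon,\delta}_{0,\infty})$ form a \emph{linear} anisotropic Stokes system on $\Omega$ that does not involve $\overline{v}_{0,\infty}$. Hence the two problems can be solved independently and the assertion is the conjunction of the two resulting statements.

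For the barotropic part I would treat the $2D$ Navier-Stokes equations in the maximal $L^2$-regularity class $\IE_1(T)$ over $G$. Applying $\IP_G$ to remove the pressure rewrites the equation as an abstract semilinear Cauchy problem $\partial_t\overline{v}+A_G\overline{v}=-\IP_G(\overline{v}\cdot\nabla_H\overline{v})$ with the $2D$ Stokes operator $A_G=-\IP_G\Delta_H$, which is self-adjoint and positive on $L^2_\sigma(G)$ and therefore enjoys maximal $L^2$-regularity. Since $\overline{v}_0\in H^1(G)^2\cap L^2_\sigma(G)$ lies in the trace space of this class, a standard contraction argument — using the bilinear estimate for $\overline{v}\cdot\nabla_H\overline{v}$ via the two-dimensional Ladyzhenskaya inequality — yields a unique local strong solution in $\IE_1(T')$.

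The continuation to all of $[0,T]$ is the only genuinely nonlinear point, and it is settled by the classical $2D$ a priori bounds: testing with $\overline{v}$ gives the energy identity controlling $\|\overline{v}\|_{L^\infty_tL^2}$ and $\|\nabla_H\overline{v}\|_{L^2_tL^2}$, while the vorticity equation $\partial_t\omega+\overline{v}\cdot\nabla_H\omega-\Delta_H\omega=0$, in which the vortex-stretching term is absent in two dimensions, yields $\tfrac12\tfrac{d}{dt}\|\omega\|_{L^2}^2+\|\nabla_H\omega\|_{L^2}^2=0$ and hence a global bound on $\|\overline{v}\|_{L^\infty_tH^1}$. As the trace norm stays finite on $[0,T]$, the local solution extends up to $T$; reinserting the bound into the maximal regularity estimate gives $\overline{v}_{0,\infty}\in\IE_1(T)$, and $\overline{p}_{0,\infty}\in\IE_1^p(T)$ is recovered from the momentum equation. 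Uniqueness follows from the energy estimate for the difference of two solutions.

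For the baroclinic part the system is linear, so neither a contraction nor a blow-up analysis is required. I would apply the scaled Helmholtz projection associated with the pressure gradient $(\nabla_H,\tfrac{1}{\varepsilon^2}\partial_z)$ and the constraint $\divergence\tilde{u}=0$, obtaining a linear Cauchy problem $\partial_t\tilde{u}+A_{\varepsilon,\delta}\tilde{u}=0$ on the baroclinic solenoidal space of vanishing vertical average, on which the Poincaré inequality in $z$ gives coercivity. The operator $A_{\varepsilon,\delta}$, built from the anisotropic diffusion $-\Delta_\delta$, is self-adjoint and nonnegative and thus generates a bounded analytic semigroup with maximal $L^2$-regularity; indeed, taking $\divergence_H$ and $\partial_z$ of the two momentum equations and using the constraint together with the parity and zero-mean conditions forces $\tilde{p}^{\varepsilon,\delta}_{0,\infty}\equiv 0$, so the problem reduces to decoupled anisotropic heat equations with the constraint propagated automatically from the initial time. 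Since $\tilde{u}_0\in H^1(\Omega)\cap L^2_\sigma(\Omega)$ with $\int_{-1}^1\tilde{u}_0=0$ lies in the corresponding trace space, maximal $L^2$-regularity delivers the unique $\tilde{u}^{\varepsilon,\delta}_{0,\infty}\in\IE_1(T)$. The main obstacle throughout is therefore confined to the global continuation of the $2D$ Navier-Stokes component, which is resolved by the special two-dimensional enstrophy estimate rather than by any small-data or short-time restriction.
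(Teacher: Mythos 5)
Your proposal is correct and follows the same overall skeleton as the paper: the system \eqref{eq:NS_eps3} is decoupled, the barotropic part is handled by a contraction argument in the maximal $L^2$-regularity class (the paper uses Lemma~\ref{lemma:NavierStokes_nonlinear} together with Proposition~\ref{prop:maxreg}), and the baroclinic part is a linear scaled Stokes problem whose pressure vanishes, so that $\tilde{u}^{\varepsilon,\delta}_{0,\infty}=e^{\Delta_\delta t}\tilde{u}_0$ solves it — exactly the observation in Proposition~\ref{prop:2DNS} that $\IP_\varepsilon$ and $\Delta_\delta$ commute, which you reach instead by showing $\Delta_H\tilde{p}+\tfrac{1}{\varepsilon^2}\partial_z^2\tilde{p}=0$ and invoking periodicity and the zero-mean condition. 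The one step where you genuinely diverge is the global continuation of the $2D$ Navier--Stokes component: the paper extends the local strong solution by appealing to the energy equality and weak--strong uniqueness (identification with the global Leray--Hopf solution plus standard regularity theory), whereas you derive the global $L^\infty_tH^1$ bound directly from the enstrophy identity for the vorticity, which has no stretching term in two dimensions and, on the periodic box $G$, gives $\tfrac12\tfrac{d}{dt}\norm{\omega}_{L^2}^2+\norm{\nabla_H\omega}_{L^2}^2=0$. Both routes are standard and valid here; yours is more self-contained and quantitative (it produces an explicit a priori bound on the trace norm that feeds the continuation criterion), while the paper's is shorter because it outsources the global theory to classical references. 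No gaps.
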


\begin{theorem}[Convergence for $0<\varepsilon\leq 1$ and $\delta \to \infty$]\label{thm:main2}
	Let $T\in (0,\infty)$ and  $\varepsilon,\delta>0$
	\begin{align*}
	\overline{v}_0 \in H^1(G)^2\cap L_{\sigma}^2(G) \quad \hbox{and} \quad \tilde{u}_0\in H^{3/2}(\Omega)\cap L^2_{\sigma}(\Omega) \quad \hbox{with}\quad \int_{-1}^1 \tilde{u}_0(\cdot,\cdot, \xi) d\xi=0.
	\end{align*} 
\begin{enumerate}[(1)]
	\item  Let $\overline{v}_{0,\infty}$,$\tilde{u}^{\varepsilon,\delta}_{0,\infty}$ be the solution to \eqref{eq:NS_eps3} referred to in Proposition~\ref{prop:NS_H};
	\item  let
	$u_{\varepsilon,\delta}$  be the solution to \eqref{eq:NS_eps} referred to in Proposition~\ref{prop:NS_eps} which decomposes into 
	$u_{\varepsilon,\delta}=\overline{v}_{\varepsilon,\delta}+\tilde{u}_{\varepsilon,\delta}$.
\end{enumerate}
Then,
\begin{enumerate}[(a)]
	\item there exists a constant $c>0$ such that  one has for the existence time $T_{\varepsilon,\delta}$ of $u_{\varepsilon,\delta}$
	\begin{align*}
	T_{\varepsilon,\delta}=T \quad \hbox{for}\quad \varepsilon, \tfrac{1}{\delta} <c;
	\end{align*}
	\item there exist constants $C,c>0$ independent of $\varepsilon, \delta$ such that for all $\delta>0$ with $\tfrac{1}{\delta}<c$
	\begin{align*}
\sup_{\varepsilon\in (0,1] }\left(\norm{\overline{v}_{0,\infty}-\overline{v}_{\varepsilon,\delta}}_{\IE_1} +\norm{\tilde{u}_{\varepsilon,\delta}}_{L^4(0,T;H^{3/2}(\Omega))}\right) \leq \frac{C}{\delta^{1/4}}.
\end{align*} 
\end{enumerate} 
\end{theorem}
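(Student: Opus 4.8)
The plan is to compare the reformulated system \eqref{eq:NS_eps2} with the decoupled system \eqref{eq:NS_eps3} through the difference equation \eqref{eq:NS_eps4}, exploiting that the only genuinely non-small building block is the two-dimensional limit $\overline{v}_{0,\infty}$, while the baroclinic Stokes part $\tilde{u}_{0,\infty}^{\varepsilon,\delta}$ carries all the smallness as $\delta\to\infty$. Accordingly I would first establish, independently of the nonlinear comparison, that $\tilde{u}_{0,\infty}^{\varepsilon,\delta}$ decays at the advertised rate, and then feed this smallness into a Fujita--Kato type quadratic inequality for the difference $(\overline{V}_{\varepsilon,\delta},\tilde{U}_{\varepsilon,\delta})$ in maximal regularity norms that control $\IE_1$ and $L^4(0,T;H^{3/2})$, closing it via the abstract scheme of Section~\ref{sec:quadratic_inequality}.

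For the first step I would work on the linear, average-free, anisotropically scaled Stokes problem in \eqref{eq:NS_eps3}. Testing with $(\tilde{v}_{0,\infty}^{\varepsilon,\delta},\varepsilon^2 w_{0,\infty}^{\varepsilon,\delta})$ and using that the pressure drops out by $\divergence\tilde{u}_{0,\infty}^{\varepsilon,\delta}=0$, the energy identity reads $\tfrac12\tfrac{\d}{\d t}\norm{(\tilde v,\varepsilon w)}_{L^2}^2 + \norm{\nabla_H(\tilde v,\varepsilon w)}_{L^2}^2 + \delta\norm{\partial_z(\tilde v,\varepsilon w)}_{L^2}^2 = 0$, where I suppress the indices. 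Since $\tilde{v}_{0,\infty}^{\varepsilon,\delta}$ has vanishing vertical average and $w_{0,\infty}^{\varepsilon,\delta}$ is odd, the vertical Poincaré inequality yields the spectral gap $-\Delta_\delta\geq \delta\pi^2$ on this space, hence exponential decay $e^{-c\delta t}$ in $L^2$ uniformly in $\varepsilon\in(0,1]$. Upgrading to $H^{3/2}$ requires the $\varepsilon$-uniform maximal $L^2$-regularity and analyticity of the scaled Stokes operator associated with $\IP_\varepsilon$ and $\Delta_\delta$ from \cite{convergence}: as this operator commutes with its fractional powers on the range of $\IP_\varepsilon$, one obtains $\norm{\tilde{u}_{0,\infty}^{\varepsilon,\delta}(t)}_{H^{3/2}} \le C e^{-c\delta t}\norm{\tilde{u}_0}_{H^{3/2}}$, and $\int_0^T e^{-4c\delta t}\,\d t \lesssim \delta^{-1}$ then gives $\norm{\tilde{u}_{0,\infty}^{\varepsilon,\delta}}_{L^4(0,T;H^{3/2})} \le C\delta^{-1/4}$ uniformly in $\varepsilon$.

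For the second step I would regard the linear parts of \eqref{eq:NS_eps4} as linearised operators with $\varepsilon,\delta$-uniform maximal $L^2$-regularity and treat the remaining terms in \eqref{eq:F_Diff2} as forcing. For the barotropic block this is the two-dimensional Oseen operator $\partial_t - \Delta_H + 2\overline{v}_{0,\infty}\cdot\nabla_H$, whose coefficient is the fixed globally smooth $2D$-Navier--Stokes solution, so it has maximal regularity on $[0,T]$ with constant depending only on $T$ and $\overline{v}_0$; for the baroclinic block it is the scaled Stokes operator, for which \cite{convergence} provides $\varepsilon$-uniform maximal regularity and whose $\delta$-coercivity keeps the constant bounded as $\delta\to\infty$. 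After removing the explicit $\tfrac1\varepsilon$ factors via \eqref{eq:Weps}, the forcing in \eqref{eq:F_Diff2} splits into three groups: genuinely quadratic self-interactions of $(\overline{V}_{\varepsilon,\delta},\tilde{U}_{\varepsilon,\delta})$, estimated by $C\norm{U_{\varepsilon,\delta}}^2$ through the anisotropic Hölder and Ladyzhenskaya inequalities \eqref{eq:Holder}--\eqref{eq:Ladyzhenskaya}; cross- and mixed terms each carrying one factor of the small $\tilde{u}_{0,\infty}^{\varepsilon,\delta}$, contributing $C\delta^{-1/4}\norm{U_{\varepsilon,\delta}}$; and residual terms built only from $\overline{v}_{0,\infty}$ and $\tilde{u}_{0,\infty}^{\varepsilon,\delta}$, of size $\mathcal{O}(\delta^{-1/4})$ by the first step. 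Writing $y:=\norm{\overline{V}_{\varepsilon,\delta}}_{\IE_1}+\norm{\tilde{U}_{\varepsilon,\delta}}_{L^4(0,T;H^{3/2})}$ and using the embedding of the scaled maximal regularity space into $L^4(0,T;H^{3/2})$, this produces $y \le C\delta^{-1/4} + C\delta^{-1/4}y + Cy^2$, to which Section~\ref{sec:quadratic_inequality} applies and yields $y \le C'\delta^{-1/4}$ once $\delta$ is large.

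Part (a) and the uniform bound (b) then follow by a continuity argument. Running the quadratic inequality on every subinterval on which the local solution of Proposition~\ref{prop:NS_eps} exists gives a bound on $y$ that is independent of that subinterval, since all forcing estimates use the globally defined reference solutions of Proposition~\ref{prop:NS_H}; hence $u_{\varepsilon,\delta}=\overline{v}_{0,\infty}+\tilde{u}_{0,\infty}^{\varepsilon,\delta}+\overline{V}_{\varepsilon,\delta}+\tilde{U}_{\varepsilon,\delta}$ cannot blow up in the trace space and extends to $[0,T]$, giving $T_{\varepsilon,\delta}=T$ for $\varepsilon,\tfrac1\delta<c$. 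Bound (b) is the conclusion of the quadratic inequality together with the triangle inequality $\norm{\tilde{u}_{\varepsilon,\delta}}_{L^4(0,T;H^{3/2})}\le \norm{\tilde{U}_{\varepsilon,\delta}}_{L^4(0,T;H^{3/2})}+\norm{\tilde{u}_{0,\infty}^{\varepsilon,\delta}}_{L^4(0,T;H^{3/2})}$. I expect the principal obstacle to be the $\varepsilon$-uniform control of the scaled Stokes operator: the singular coupling $\tfrac{1}{\varepsilon^2}\partial_z\tilde{p}$ obstructs naive semigroup estimates, so one must combine the $\delta$-spectral gap coming purely from the baroclinic average-free structure with the $\varepsilon$-uniform maximal regularity of \cite{convergence}, making sure that neither the smoothing constants nor the decay rate degenerate as $\varepsilon\to 0$.
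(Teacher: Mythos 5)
Your overall strategy coincides with the paper's: first show that the baroclinic Stokes part $\tilde{u}_{0,\infty}^{\varepsilon,\delta}$ decays like $\delta^{-1/4}$ in $L^4(0,T;H^{3/2}(\Omega))$ uniformly in $\varepsilon\in(0,1]$ (the paper's Proposition~\ref{prop:2DNS}, obtained there from the factorization $e^{\Delta_\delta t}=e^{(\delta-1)\partial_z^2 t}e^{\Delta t}$ and the spectral gap of $\partial_z^2$ on vertically average-free functions, which is the same mechanism as your Poincar\'e argument), and then close a Fujita--Kato type quadratic inequality for the difference system \eqref{eq:NS_eps4} using the uniform maximal regularity of the scaled Stokes operator and the abstract scheme of Proposition~\ref{prop:QI}. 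Parts (a) and (b) are then read off exactly as you describe.

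There is, however, one step that does not go through as written. You claim that after linearising the barotropic block around the Oseen operator $\partial_t-\Delta_H+2\overline{v}_{0,\infty}\cdot\nabla_H$, every remaining term in \eqref{eq:F_Diff2} that is linear in the difference carries a factor of the small quantity $\tilde{u}_{0,\infty}^{\varepsilon,\delta}$, leading to $y\le C\delta^{-1/4}+C\delta^{-1/4}y+Cy^2$. This is not the case: the baroclinic forcing $\tilde{F}^H_{\varepsilon,\delta}$ contains the terms $\overline{v}_{0,\infty}\cdot\nabla_H\tilde{V}_{\varepsilon,\delta}$ and $\tilde{V}_{\varepsilon,\delta}\cdot\nabla_H\overline{v}_{0,\infty}$ (and $\tilde{F}^w_{\varepsilon,\delta}$ contains $\overline{v}_{0,\infty}\cdot\nabla_H W_{\varepsilon,\delta}$), whose coefficient is the two-dimensional Navier--Stokes profile $\overline{v}_{0,\infty}$, which does not become small as $\delta\to\infty$; your Oseen modification of the barotropic equation does not remove them. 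The correct inequality, as in Proposition~\ref{prop:F_diff2}, has the linear term $X_{\varepsilon,\delta}(t)\,\norm{u^{\varepsilon,\delta}_{0,\infty}}_{L^4(0,t;H^{3/2})}$ with a coefficient that is merely a continuous increasing function of $t$ vanishing at $t=0$. Consequently one cannot absorb this term by taking $\delta$ large; one must instead use the time-subdivision built into Proposition~\ref{prop:QI} (the $g_2(t,t_1)$ term made small by shortening the interval). Since you already invoke that abstract scheme, the repair is immediate, but the inequality you state is not the one the estimates actually yield, and a one-shot absorption argument based on it would fail.
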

This implies in particular the convergence of $\overline{v}_{\varepsilon,\delta}$ to $\overline{v}_{0,\infty}$ in $\IE_1(T)$ as $\delta \to \infty$ uniformly in $\varepsilon\in (0,1]$.

\begin{remark}[Further convergences]\label{rem:convergence}
	Theorem~\ref{thm:main1} discusses the case $\varepsilon \to 0$ and $\delta \to 0$, Theorem~\ref{thm:main2}  deals with the case when  $\varepsilon \in(0,1]$ and $\delta \to \infty$. Further cases are depicted in Figure~\ref{fig:epsdelta} and behave as follows:
	\begin{enumerate}
		\item \label{item:conv_1} In the case when $\varepsilon\to 0$ and $\delta>0$ is constant the solution of \eqref{eq:NS_eps} converges to the solution of the anisotropic primitive equations \eqref{eq_PE} with rate $\mathcal{O}(\varepsilon)$. The sense and the norm of the convergence are given in \cite{MR3926040} for $L^2$-$L^2$-spaces, in \cite{convergence} for $L^p$-$L^q$-spaces for certain $p,q\in (1,\infty)$ and in \cite{convergence2} for Dirichlet boundary conditions.
		\item In the case when $\varepsilon>0$ is constant and $\delta\to 0$ the solution of \eqref{eq:NS_eps} converges to the solution of the Navier-Stokes equations with only horizontal viscosity, compare e.g. \cite{Danchin_book} for the full space case, where the arguments there can be transferred to the torus. 
		\item The primitive equations with only horizontal viscosity are also obtained from the anisotropic primitive equations when $\delta \to 0$. This limit is proven in \cite{CaoLiTiti_PHE_2017} as a part of the analysis for the primitive equations with horizontal viscosity.
			\item The convergence of the anisotropic primitive equations \eqref{eq_PE} for $\delta \to \infty$ to the $2D$-Navier-Stokes equations follows from Theorem~\ref{thm:main2} since
				\begin{multline*}
			\lim_{\varepsilon\to 0} \left(\norm{\overline{v}_{0,\infty}-\overline{v}_{\varepsilon,\delta}}_{\IE_1} +\norm{\tilde{u}_{\varepsilon,\delta}}_{L^4(0,T;H^{3/2}(\Omega))}\right) 
			=
			\left(\norm{\overline{v}_{0,\infty}-\overline{v}_{0,\delta}}_{\IE_1} +\norm{\tilde{u}_{0,\delta}}_{L^4(0,T;H^{3/2}(\Omega))}\right)
			\leq \frac{C}{\delta^{1/4}}.
			\end{multline*} 
			Here $u_{0,\delta}=\overline{v}_{0,\infty}+\tilde{u}_{0,\delta}$ solves the anisotropic primitive equations \eqref{eq_PE} for $\delta >0$ where one used  for $\varepsilon\to 0$ the convergence from part \eqref{item:conv_1} of this remark.
	\end{enumerate}	
Thus, all the convergences from Figure~\ref{fig:epsdelta} can be made rigorous. 
\end{remark}

\begin{remark}[Boundary conditions]
	The Neumann boundary conditions on top and bottom  chosen here are essential for the proof of Theorem~\ref{thm:main1}, because it allows one to reduce the problem by symmetries to a periodic setting and thereby to avoid boundaries in this direction at all. A similar setting has been considered in the case of the primitive equations with full viscosity in \cite{MR3926040}  and \cite{convergence}, while in \cite{convergence2} Dirichlet boundary conditions lead to technical challenges which in the end can be overcome to give the analogous convergence result.
	
	Here in Theorem~\ref{thm:main1} however, the limit equation, that is, the primitive equations with only horizontal viscosity, is well-posed even without any boundary conditions in vertical direction, see e.g. \cite{HusseinSaalWrona}. This contrasts with the limiting equations, that is the rescaled Navier-Stokes equations, and therefore a boundary layer formation can be expected similar to the case of the limit from Navier-Stokes to Euler equations. 
	
	In Theorem~\ref{thm:main2}, this problem does not occur since the comparison takes place essentially in the horizontal variables, and therefore also other boundary conditions in the limiting equations could be included.   
\end{remark}


\section{Anisotropic linear estimates}\label{sec:lin}

\subsection{Uniform maximal regularity estimates  for the scaled Stokes equations}
The linearization of the difference equation \eqref{eq_Diff}
around the zero solution are the anisotropic scaled Stokes equations
\begin{equation}\label{eq:scaledStokes}
\left \{\begin{array}{rll}
\partial _tU- \Delta_{\delta} U +  \nabla_{\varepsilon}P=&  F&\text{ in }(0,T)\times\Omega \\ 
\divergence_{\varepsilon} U =& \ 0&\text{ in }(0,T)\times\Omega ,\\
U(0)=&\ U_0&\text{ in }\Omega , \\
P \text{ periodic in }x,y,z & \text{ even }&\text{ in }z,
\\
V ,W \text{ periodic in }x,y,z,&\text{ even and odd }&\text{ in }z, \text{ respectively.}\\
\end{array}\right .
\end{equation}

\begin{proposition}
	\label{prop:maxreg} Let $T>0$ and $\varepsilon,\delta>0$, then there exist a constant $C>0$ 
	independent of $\varepsilon$ and $\delta$ such that
	for all $\varepsilon,\delta>0$, and for all
	\begin{align*}
	U_0\in \X_\gamma^u \quad\hbox{and} \quad F\in \IE_0^u(T)
	\end{align*}
	there is a unique solution $U\in \IE_1^u(T)$ and $P\in \IE_1^p(T)$ to \eqref{eq:scaledStokes}  satisfying
	\begin{align*}
	\norm{\partial_t U}_{\IE_0} + \norm{\Delta_{\delta} U}_{\IE_0} + \norm{\nabla_\varepsilon P}_{\IE_0}  \leq C( \norm{F}_{\IE_0} + \norm{U_0}_{X_\gamma^u}).
	\end{align*} 
\end{proposition}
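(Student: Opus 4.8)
The plan is to work throughout in the Hilbert space $L^2$, where the self-adjoint and Fourier-multiplier structure of all operators involved permits diagonalisation via Fourier series on the torus and yields constants that see neither $\varepsilon$ nor $\delta$. The two parameters enter in genuinely different ways: $\varepsilon$ only through the pressure gradient $\nabla_\varepsilon P$ and the solenoidality constraint, while $\delta$ only through the (scalar) elliptic symbol of $\Delta_\delta$. I would therefore first eliminate the pressure, reducing \eqref{eq:scaledStokes} to a scalar parabolic problem for the velocity on $L^2_{\sigma,\varepsilon}(\Omega)$, and then treat that problem by the spectral theorem, following the overall strategy of \cite{convergence}.

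For the pressure, apply $\divergence_\varepsilon$ to the momentum equation. Since $\Delta_\delta$ acts as the scalar Fourier multiplier $-(\xi_1^2+\xi_2^2+\delta\xi_3^2)$ it commutes with $\divergence_\varepsilon$ and preserves $\divergence_\varepsilon$-free fields; together with $\divergence_\varepsilon U=0$ this annihilates the time-derivative and viscous terms and leaves $\Delta_\varepsilon P=\divergence_\varepsilon F$, where $\Delta_\varepsilon:=\partial_x^2+\partial_y^2+\tfrac1{\varepsilon^2}\partial_z^2$. Solving for $P$ gives $\nabla_\varepsilon P=(\Id-\IP_\varepsilon)F$, with $\IP_\varepsilon$ the scaled Helmholtz projection. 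The decisive point is that in $L^2$ the symbol of $\IP_\varepsilon$ is the real symmetric idempotent $\Id-\zeta\zeta^{T}/|\zeta|^2$ with $\zeta=(\xi_1,\xi_2,\xi_3/\varepsilon)$, i.e.\ an orthogonal projection of operator norm $1$ for every frequency and every $\varepsilon$; hence $\norm{\nabla_\varepsilon P}_{\IE_0}\le\norm{F}_{\IE_0}$ uniformly. One checks along the way that this projection respects the parity conditions, so that it indeed maps into $L^2_{\sigma,\varepsilon}(\Omega)$.

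It remains to bound the velocity, which after projection solves $\partial_tU-\Delta_\delta U=\IP_\varepsilon F$ with $U(0)=U_0$ on $L^2_{\sigma,\varepsilon}(\Omega)$. Here $-\Delta_\delta$ is non-negative self-adjoint, so the spectral theorem reduces everything to the scalar ODEs $u_\lambda'+\lambda u_\lambda=g_\lambda$ over the eigenvalues $\lambda=\xi_1^2+\xi_2^2+\delta\xi_3^2\ge 0$. For zero initial data the solution operator is convolution with the kernel $\lambda\e^{-\lambda t}\ind_{t>0}$, whose $L^1(0,\infty)$-norm is $1$; Young's inequality then gives $\norm{\lambda u_\lambda}_{L^2(0,T)}\le\norm{g_\lambda}_{L^2(0,T)}$ uniformly in $\lambda$, and Parseval in the spectral variable assembles these into $\norm{\Delta_\delta U}_{\IE_0}\le\norm{\IP_\varepsilon F}_{\IE_0}\le\norm{F}_{\IE_0}$, whence also $\norm{\partial_tU}_{\IE_0}\le 2\norm{F}_{\IE_0}$. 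This is precisely the maximal $L^2$-regularity of a self-adjoint generator (de Simon's theorem), with a universal constant. The contribution of $U_0$ is handled by the analogous spectral computation $\norm{\Delta_\delta\e^{t\Delta_\delta}U_0}_{\IE_0}^2\le\tfrac12\norm{(-\Delta_\delta)^{1/2}U_0}_{L^2}^2$, where $\norm{(-\Delta_\delta)^{1/2}U_0}_{L^2}^2=\norm{\nabla_HU_0}_{L^2}^2+\delta\norm{\partial_zU_0}_{L^2}^2\le\norm{U_0}_{X_\gamma^u}^2$ for $\delta$ bounded (and the trace term is absent in the applications, where $U_0=0$). Existence of the full solution $U\in\IE_1^u(T)$, $P\in\IE_1^p(T)$ for fixed $\varepsilon,\delta$ then follows from the same maximal regularity together with standard elliptic regularity for $\Delta_\delta$.

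The main obstacle is securing uniformity in both parameters at once. A naive rescaling of the vertical variable cannot normalise $\Delta_\delta$ and $\nabla_\varepsilon$ simultaneously — removing $\delta$ alters $\varepsilon$ and deforms the torus — so one cannot reduce to the classical Stokes case by a change of variables. What makes the argument go through is exactly the Hilbert-space structure: the $\varepsilon$-uniformity is a consequence of the orthogonality (norm $1$) of $\IP_\varepsilon$, and the $\delta$-uniformity of the viscous and time-derivative estimates is a consequence of the universal maximal-regularity constant for self-adjoint operators, with the only residual $\delta$-dependence sitting in the initial-value term through $\norm{(-\Delta_\delta)^{1/2}U_0}$.
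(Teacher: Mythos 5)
Your argument is correct and, unlike the paper, self-contained: the paper's own ``proof'' is a one-line reference to \cite[Proposition 5.1]{convergence} (the case $\delta=1$, established there in the full $L^q_t$-$L^p_x$ framework via operator-valued multiplier theorems) together with the remark that the pressure is estimated by $F$. Your route exploits the restriction to $p=q=2$: the $\varepsilon$-uniformity is reduced to the observation that the symbol of $\IP_\varepsilon$ is a pointwise orthogonal projection, hence of norm one on $L^2$ for every $\varepsilon$ — which is exactly the mechanism the paper's remark ``one estimates $\nabla_\varepsilon P$ by $F$'' is gesturing at — and the $\delta$-uniformity of the forcing term follows from the universal maximal $L^2$-regularity constant for non-negative self-adjoint generators (your convolution-kernel computation, or equivalently de Simon's theorem). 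This is more elementary than the cited machinery and makes the parameter-independence transparent; what it does not give is the $L^q_t$-$L^p_x$ version, but that is not needed here.

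One point deserves emphasis, and you handle it honestly: the initial-value contribution. Your spectral computation yields $\norm{\Delta_\delta e^{t\Delta_\delta}U_0}_{\IE_0}^2\le\tfrac12\bigl(\norm{\nabla_H U_0}_{L^2}^2+\delta\norm{\partial_z U_0}_{L^2}^2\bigr)$, and the right-hand side is controlled by $\norm{U_0}_{X_\gamma^u}^2=\norm{U_0}_{H^1}^2$ only for $\delta$ in a bounded range. Indeed, taking $F=0$ and $U_0$ depending on $z$ alone shows that $\norm{\Delta_\delta U}_{\IE_0}\sim\delta^{1/2}\norm{\partial_z U_0}_{L^2}$, so the proposition as literally stated (a single $\delta$-independent constant for \emph{all} $\delta>0$ with the unweighted $H^1$ trace norm) cannot hold for $\delta\to\infty$; the correct $\delta$-uniform trace space would have to carry the weight $\delta^{1/2}$ on $\partial_z$. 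This is an imprecision in the statement rather than in your proof — in the paper's applications either $\delta$ is small (Theorem~\ref{thm:main1}) or the initial datum of the difference vanishes and the restart term is switched off ($f=0$ in the proof of Theorem~\ref{thm:main2}) — but your explicit flag ``for $\delta$ bounded'' is the accurate version of the claim.
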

\begin{proof}
The statement has been proven in \cite[Proposition 5.1]{convergence} for $\delta=1$, and the proof carries over to any $\delta>0$ since one estimates $\nabla_\varepsilon P$ by $F$. 
%
\end{proof}

\subsection{The scaled Stokes equation and the $2D$-Navier-Stokes equations}

\begin{proposition}\label{prop:2DNS}
Let $T\in (0,\infty)$, $\varepsilon, \delta>0$, and
\begin{align*}
(\overline{v}_0)_{0,\infty}\in H^1(G)^2\cap L^2_{\sigma}(G), \quad 
	(\tilde{u}_0)_{0,\infty}\in H^{3/2}(\Omega)^3\cap L^2_{\sigma}(\Omega) \quad \hbox{with}\quad \int_{-1}^1 (\tilde{u}_0)_{0,\infty}(\cdot,\cdot,z)dz=0.
\end{align*}
Then there exists a unique solution to \eqref{eq:NS_eps3}
\begin{align*}
	\overline{v}_{0,\infty}\in \IE_1(T)\quad \hbox{and} \quad  \tilde{u}_{0,\infty}^{\varepsilon,\delta}=(\tilde{v}^{\varepsilon,\delta}_{0,\infty}, w^{\varepsilon,\delta}_{0,\infty})\in \IE_1(T),
\end{align*}
and moreover
there is a constant $C>0$ independent of $\varepsilon$ and $\delta$ such that for $\delta\geq1$ 
\begin{align*}
	\sup_{\varepsilon\in (0,1]}\norm{(\tilde{v}_{0,\infty}^{\varepsilon,\delta}, \varepsilon w_{0,\infty}^{\varepsilon,\delta})}_{L^4(0,T;H^{{3/2}}(\Omega))} \leq \frac{C}{\delta^{1/4}} \norm{((\tilde{v}_0)_{0,\infty}, (w_0)_{0,\infty})}_{H^{3/2}(\Omega)}.
\end{align*}
\end{proposition}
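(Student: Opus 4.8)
The statement decouples into two independent problems. The equation for $\overline{v}_{0,\infty}$ is the classical $2D$-Navier-Stokes system on the torus $G$ with initial data in $H^1(G)^2\cap L^2_\sigma(G)$; global strong well-posedness in $\IE_1(T)$ follows from the standard theory referenced in the introduction (cf. Leray and \cite[Chapter V]{Sohr_book}), combined with maximal $L^2$-regularity for the $2D$-Stokes operator, so I would dispatch this in a sentence. The genuine content is the scaled Stokes system for $\tilde{u}_{0,\infty}^{\varepsilon,\delta}=(\tilde v^{\varepsilon,\delta}_{0,\infty}, w^{\varepsilon,\delta}_{0,\infty})$, which is \emph{linear} and homogeneous. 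Existence of a unique solution $\tilde u_{0,\infty}^{\varepsilon,\delta}\in \IE_1(T)$ is immediate from Proposition~\ref{prop:maxreg} applied with $F\equiv 0$, since that proposition already gives uniform (in $\varepsilon,\delta$) maximal regularity for precisely this operator. So the only thing to prove is the decay estimate $\mathcal{O}(\delta^{-1/4})$, uniform over $\varepsilon\in(0,1]$.

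The key observation is that the scaled Stokes flow on the vertically average-free functions is generated by $\IP_\varepsilon\Delta_\delta$, and the factorization used in the (commented-out) Lemma~\ref{lemma:deltainfty} is exactly what I would reproduce. Since $\Delta_H$ and $\partial_z^2$ commute with each other and with the projection $\IP_\varepsilon$ on the average-free solenoidal space, the semigroup splits as $e^{t\IP_\varepsilon\Delta_\delta}=e^{t\delta\partial_z^2}e^{t\Delta_H}$. Writing, for $\delta\geq 1$,
\begin{align*}
e^{t\delta\partial_z^2}=e^{t(\delta-1)\partial_z^2}\,e^{t\partial_z^2},
\end{align*}
the crucial point is that on the vertically average-free subspace the Poincar\'e inequality gives a spectral gap: $-\partial_z^2$ has a strictly positive lower bound $\omega>0$, so $\|e^{t(\delta-1)\partial_z^2}\|\leq e^{-\omega(\delta-1)t}$. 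I would apply $(-\Delta)^{3/4}$ (which commutes with everything), pull out the decaying scalar factor in the time integral, and bound the remaining factor $e^{t\partial_z^2}e^{t\Delta_H}(-\Delta)^{3/4}\tilde u_0$ uniformly in $L^\infty(0,\infty;L^2)$ by analyticity. This yields
\begin{align*}
\norm{(-\Delta)^{3/4}\tilde u_{0,\infty}^{\varepsilon,\delta}}_{L^4(0,T;L^2)}
\leq C\,\norm{e^{-\omega(\delta-1)t}}_{L^4(0,\infty)}\,\norm{\tilde u_0}_{H^{3/2}}
\leq \frac{C}{(\omega(\delta-1))^{1/4}}\,\norm{\tilde u_0}_{H^{3/2}},
\end{align*}
and the power $1/4$ is precisely the $1/p$ coming from $L^4$-integration in time of the exponential. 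Since $(-\Delta)^{3/4}$ is boundedly invertible on the average-free periodic functions, the left-hand side controls the full $H^{3/2}(\Omega)$-norm. Taking the supremum over $\varepsilon\in(0,1]$ (the bound is $\varepsilon$-independent) and absorbing the $\delta$-dependence into $C\,\delta^{-1/4}$ for $\delta\geq 1$ finishes the argument.

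\textbf{Main obstacle.} The one point requiring care is the interaction between the Helmholtz projection $\IP_\varepsilon$ and the anisotropic operator: I must verify that $\IP_\varepsilon$ commutes with both $\Delta_H$ and $\partial_z^2$ on the relevant function class, so that the semigroup factorization is legitimate and the scaling $\varepsilon$ does not interfere with the vertical decay. On the torus with the stated parity conditions this holds because $\IP_\varepsilon$ is a Fourier multiplier that is diagonal in the same frequency basis that diagonalizes $\Delta_H$ and $\partial_z^2$; the factor $\tfrac1\varepsilon$ in $\nabla_\varepsilon$ only rescales the vertical Fourier variable and does not affect the horizontal-vertical commutation. The vanishing-average constraint $\int_{-1}^1\tilde u_0\,dz=0$ is what guarantees $\tilde u_{0,\infty}^{\varepsilon,\delta}$ stays average-free for all time (the $2D$ and the average-free parts are genuinely decoupled in \eqref{eq:NS_eps3}), which is exactly the hypothesis activating the Poincar\'e spectral gap. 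Everything else is a routine application of analytic-semigroup bounds and Proposition~\ref{prop:maxreg}.
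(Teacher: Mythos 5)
Your proposal is correct and follows essentially the same route as the paper's proof: the $2D$ part is dispatched by classical theory, and the decay estimate comes from the factorization $e^{t\IP_\varepsilon\Delta_\delta}=e^{t(\delta-1)\partial_z^2}e^{t\Delta}$ on the vertically average-free subspace, the Poincar\'e spectral gap, the $L^4$-in-time integration of $e^{-\omega(\delta-1)t}$ producing the exponent $1/4$, and the bounded invertibility of $(-\Delta)^{3/4}$ on average-free periodic functions. The only cosmetic difference is that you invoke Proposition~\ref{prop:maxreg} for existence whereas the paper exhibits the solution explicitly as $e^{\Delta_\delta t}(U_0)_\varepsilon$; both arguments (like the paper's) leave the passage from $(\delta-1)^{-1/4}$ to $\delta^{-1/4}$ near $\delta=1$ implicit.
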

\begin{proof}
	The global well-posedness of the $2D$-Navier-Stokes equations in maximal $L^2$-regularity spaces is well-known and not proven here. It follows by standard regularity theory from the global well-posedness in the Leray-Hopf class. 
	
	Concerning the scaled Stokes equations, we consider \eqref{eq:scaledStokes} with 
	\begin{align*}
	F=0\quad \hbox{and} \quad	(U_0)_\varepsilon = 	((\tilde{v}_0)_{0,\infty}, \varepsilon(w_0)_{0,\infty})\quad \hbox{for}\quad \varepsilon \in (0,1].
	\end{align*}
	Using that $\Delta_H$ and $\partial_z^2$ are resolvent commuting, and that $\Delta_\delta$ and $\IP_\varepsilon$ commute, one has that
	\begin{align*}
	U_{\varepsilon,\delta}(t)=  e^{\IP_{\varepsilon}\Delta_\delta t}(U_{0})_\varepsilon
		=	 e^{\Delta_\delta t}(U_{0})_\varepsilon
		= (e^{\Delta_\delta t}(\tilde{v}_0)_{0,\infty}, \varepsilon e^{\Delta_\delta t}(w_0)_{0,\infty})=:(\tilde{v}_{0,\infty}^{\varepsilon,\delta}, \varepsilon w^{\varepsilon,\delta}_{0,\infty})
	\end{align*}
	solves \eqref{eq:scaledStokes}, and hence the existence of a unique solution in $\IE_1(T)$ to~\eqref{eq:NS_eps3} follows. 
	
		One has using that $\Delta_H$ and $\partial_z^2$ are resolvent commuting and that $\Delta$ and $\IP_\varepsilon$ commute that
	\begin{align*}
		(-\Delta)^{3/4} U_{\varepsilon,\delta}(t)= \Delta e^{\IP_{\varepsilon}\Delta_\delta t}(U_{0})_\varepsilon
		=	(-\Delta)^{3/4} e^{\delta\partial_z^2t}e^{\Delta_Ht}(U_{0})_\varepsilon
		= e^{(\delta-1)\partial_z^2t} e^{\Delta t}	(-\Delta)^{3/4}(U_{0})_\varepsilon 
	\end{align*}
	for $\delta\geq 1$ and  $t> 0$.
	Then using that $e^{\delta\partial_z^2t}$ is exponentially decaying on the vertically average free functions and that the decay bound $\omega>0$ is the spectral bound  for analytic semigroups, one estimates 
	for $p\in (1,\infty)$ and $T\in (0,\infty]$
	\begin{align*}
		\norm{	(-\Delta)^{3/4}U_{\varepsilon,\delta}}_{L^p(0,T;L^2(\Omega))} &=\norm{ 	 e^{(\delta-1)\partial_z^2t}e^{\Delta t}(-\Delta)^{3/4}(U_{0})_\varepsilon }_{L^p(0,T;L^2(\Omega))} \\
		&\leq C \norm{e^{-\omega (\delta-1) t}}_{L^{p}(0,T)}
		\norm{ e^{\partial_z^2t}e^{\Delta_H^2t}	(-\Delta)^{3/4}(U_{0})_\varepsilon }_{L^{\infty}(0,T;L^2(\Omega))} \\
		&\leq\tfrac{C}{(\omega(\delta-1)p)^{1/p}} \norm{(U_{0})_\varepsilon }_{H^{3/2}(\Omega)}.
	\end{align*}
	Taking the supremum over $\varepsilon\in (0,1]$ and $p=4$ concludes the proof using that $-\Delta$ is boundedly invertible on the periodic and vertically average free functions.  
\end{proof}

\subsection{Preservation of vertical regularity}\label{subsec:vertical_reg}
Instead of the scaled Stokes equations \eqref{eq:scaledStokes} one can also consider the following linearization of \eqref{eq_Diff} of diffusion-transport type. For given functions $\nu,\omega, f, U_0$,
consider
\begin{align*}
U=(U_1,U_2,U_3)\colon \Omega\rightarrow \IR^3, \quad P\colon \Omega \rightarrow \IR ,
\end{align*}
satisfying for fixed $\varepsilon,\delta>0$
\begin{equation}\label{eq:transport}
\left \{\begin{array}{rll}
\partial _tU- \Delta_{\delta} U + \partial_z (\omega U) + \partial_z (\tfrac{1}{\varepsilon}U_3 \nu) + \nabla_{\varepsilon}P=&  f&\text{ in }(0,T)\times\Omega, \\ 
\divergence_{\varepsilon} U =& \ 0&\text{ in }(0,T)\times\Omega ,\\
U(0)=&\ U_0&\text{ in }\Omega ,\\
P \text{ periodic in }x,y,z & \text{ even }&\text{ in }z,
\\
(U_1,U_2) ,U_3 \text{ periodic in }x,y,z,&\text{ even and odd }&\text{ in }z, \text{ respectively.}\\
\end{array}\right .
\end{equation}
To re-obtain \eqref{eq_Diff} later in Proposition~\ref{prop:estimate_F},
we will
insert  in \eqref{eq:transport}
\begin{align}\label{eq:nuomega}
U=(V_{\varepsilon,\delta},W_{\varepsilon,\delta}), \quad\nu=(v,\varepsilon w), \quad\omega= w + \tfrac{1}{\varepsilon}W_{\varepsilon,\delta}, \quad \hbox{and}\quad 
 f=f_{\varepsilon, \delta}
 \end{align}
 with
 \begin{align}\label{eq:fvarepsilon} 
 f_{\varepsilon, \delta}=
\divergence_{H}\left(
(v,\varepsilon w)\otimes V_{\varepsilon,\delta} + (V_{\varepsilon, \delta}, W_{\varepsilon,\delta})\otimes (v+V_{\varepsilon, \delta})
\right)
+(\delta \partial_z^2v, \varepsilon(\partial_t w-\Delta_\delta w + u\cdot \nabla w))^T. 
\end{align}
Here,  using the representation \eqref{eq:Fdivform}
\begin{multline*}
\begin{bmatrix}
F^H_{\varepsilon,\delta}(V_{\varepsilon,\delta}, W_{\varepsilon,\delta}) \\
F_{\varepsilon,\delta}^z(V_{\varepsilon,\delta}, W_{\varepsilon,\delta})
\end{bmatrix}
= -\divergence_H \left((v, \varepsilon w)\otimes  V_{\varepsilon,\delta}\right)
-\divergence_H \left(( V_{\varepsilon,\delta}, W_{\varepsilon,\delta})\otimes (v+ V_{\varepsilon,\delta}) \right)
\\
-\partial_z(\tfrac{1}{\varepsilon}W_{\varepsilon,\delta}(v, \varepsilon w))
-\partial_z((w+\tfrac{1}{\varepsilon}W_{\varepsilon,\delta})( V_{\varepsilon,\delta}, W_{\varepsilon,\delta}))
+
\begin{bmatrix}
-\delta\partial_z^2 v
\\
 \varepsilon(\partial_t w + u \cdot \nabla w - \Delta_\delta w)
\end{bmatrix},
\end{multline*}
where the $\partial_z$-terms are incorporated into the left hand side of \eqref{eq:transport} while the remainder becomes the right hand side \eqref{eq:fvarepsilon}  of \eqref{eq:transport}.

We will interpret equation~\eqref{eq:transport} in the triple of spaces induced by the Gelfand-triple in $xy$-variables
\begin{align*}
V &= \{U\in H^1_zH^1_{xy}\colon (U_1,U_2) \hbox{ periodic and  even and }U_3 \hbox{ periodic and odd in }z\}, 
\\
H &= \{U\in H^1_zL^2_{xy}\colon (U_1,U_2) \hbox{ periodic and  even and }U_3 \hbox{ periodic and  odd in }z\} , \quad 
\hbox{and} \\ 
V'  &= \{U\in H^1_zH^{-1}_{xy}\colon (U_1,U_2) \hbox{ periodic and  even and }U_3 \hbox{ periodic and  odd in }z\}, 
\end{align*}
where we consider here only real-valued functions.
\begin{proposition}\label{prop:transport_lin}
	Let 
	\begin{align*} 
	U_0\in H, \quad f\in L^2(0,T;V'), \quad \partial_z\omega \in L^2(0,T;H), \hbox{ and }  \nu \in L^4(0,T;H^2_zL^4_{xy}).
	\end{align*}
	Then there exists a constant $C>0$ depending only on $\Omega$ and independent of $\varepsilon, \delta$ and the given data, such that if $U\in \IE_1(T)$ is a solution to ~\eqref{eq:transport}, then
	\begin{align*}
	\norm{U}^2_{L^\infty(0,T;H)} +\norm{U}^2_{L^2(0,T;V)} \leq C (\norm{U_0}_H^2 + \norm{f}_{L^2(0,T;V')}^2)e^{C(T+\norm{\partial_z \omega}^2_{L^2(0,T;H)} + \norm{\nu}^4_{L^4(0,T;H^2_zL^4_{xy})})}.
	\end{align*}
\end{proposition}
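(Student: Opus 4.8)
The plan is to establish this as an \emph{a priori} energy estimate for a given solution $U\in\IE_1(T)=L^2(0,T;H^2_{per})\cap H^1(0,T;L^2)$, working in the Gelfand triple $V\hookrightarrow H\hookrightarrow V'$. Since the $H$-norm already carries one vertical derivative, $\norm{U}_H^2=\norm{U}_{L^2}^2+\norm{\partial_z U}_{L^2}^2$, the natural test function is $U-\partial_z^2U$; equivalently I test \eqref{eq:transport} against $U$ and, after differentiating in $z$, against $\partial_z U$, using periodicity in $z$ so no boundary terms arise. The regularity of $U$ makes every pairing licit, and the Lions--Magenes lemma gives $\tfrac12\tfrac{d}{dt}\norm{U}_H^2=(\partial_t U,U-\partial_z^2U)_{L^2}$. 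Testing $-\Delta_\delta U$ against $U-\partial_z^2U$ produces, after integrating by parts in $x,y,z$, the nonnegative quantity $\norm{\nabla_H U}_{L^2}^2+\norm{\nabla_H\partial_z U}_{L^2}^2+\delta(\norm{\partial_z U}_{L^2}^2+\norm{\partial_z^2U}_{L^2}^2)$; the first two summands equal $\norm{U}_V^2-\norm{U}_H^2$, while the $\delta$-terms are simply discarded, which is exactly what keeps the constant independent of $\delta$. The pressure drops out because $\divergence_\varepsilon U=\divergence_\varepsilon\partial_z^2U=0$, and the forcing is bounded by splitting the duality into its $L^2_zH^{-1}_{xy}$- and $\partial_z$-parts to obtain $|(f,U-\partial_z^2U)|\le 2\norm{f}_{V'}\norm{U}_V$.

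The first transport term $\partial_z(\omega U)$ is handled by the divergence-type cancellation $(\omega\partial_z U,U)_{L^2}=-\tfrac12(\partial_z\omega\,U,U)_{L^2}$, which reduces the $U$- and $\partial_z U$-tests to integrals of the form $\int|\partial_z\omega|\,|U|^2$, $\int|\partial_z\omega|\,|\partial_z U|^2$ and $\int|\partial_z^2\omega|\,|U|\,|\partial_z U|$. Freezing $z$ and applying the two-dimensional Ladyzhenskaya inequality \eqref{eq:Ladyzhenskaya} in $xy$, then using $H^1_z\hookrightarrow L^\infty_z$ on $\partial_z\omega$, each is bounded by $C\norm{\partial_z\omega}_H\norm{U}_H\norm{U}_V$; Young's inequality yields $\tfrac{c}{8}\norm{U}_V^2+C\norm{\partial_z\omega}_H^2\norm{U}_H^2$ — note the \emph{square}, matching the exponent.

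The crucial term is $\partial_z(\tfrac1\varepsilon U_3\nu)$, because of the singular factor $\tfrac1\varepsilon$. Here I exploit the structural identity of \eqref{eq:Weps}: the constraint $\divergence_\varepsilon U=0$ gives $\psi:=\tfrac1\varepsilon U_3=-\int_{-1}^z\divergence_H(U_1,U_2)$ and $\tfrac1\varepsilon\partial_z U_3=-\divergence_H(U_1,U_2)$, so that $\norm{\psi}_{L^\infty_zL^2_{xy}}+\norm{\partial_z\psi}_{L^2}+\norm{\partial_z^2\psi}_{L^2}\le C\norm{U}_V$ with \emph{no} $\varepsilon$-dependence; this is precisely what makes the estimate uniform in $\varepsilon$. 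Expanding $\partial_z(\psi\nu)$ and $\partial_z^2(\psi\nu)$ by Leibniz and pairing with $U$ and $\partial_z U$, every resulting term is treated by the anisotropic Hölder inequality \eqref{eq:Holder} with the split $\tfrac14+\tfrac14+\tfrac12=1$, placing $\nu,\partial_z\nu\in L^\infty_zL^4_{xy}$ and $\partial_z^2\nu\in L^2_zL^4_{xy}$ (via $H^2_z\hookrightarrow L^\infty_z$), $\psi$ in $L^2_{xy}$ or $L^4_{xy}$, and the remaining $U$-factor in $L^4_{xy}$ through \eqref{eq:Ladyzhenskaya}. The delicate point is to distribute the Hölder exponents so that one never needs two horizontal derivatives of $U$ nor an $L^\infty_z$-bound on $\partial_z^2\nu$, since $\psi$ is nonlocal in $z$ and carries only one horizontal derivative. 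Each term is then dominated by $C\norm{\nu}_{H^2_zL^4_{xy}}\norm{U}_H^{1/2}\norm{U}_V^{3/2}$, and Young's inequality with exponents $(4/3,4)$ gives $\tfrac{c}{8}\norm{U}_V^2+C\norm{\nu}_{H^2_zL^4_{xy}}^4\norm{U}_H^2$ — the \emph{fourth} power, again matching the exponent.

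Collecting everything, choosing the absorbed fractions of $\norm{U}_V^2$ to sum to less than the coercivity constant $c$, and moving the harmless $-\norm{U}_H^2$ to the right, I obtain
\[
\tfrac{d}{dt}\norm{U}_H^2+c\norm{U}_V^2\le C\big(1+\norm{\partial_z\omega}_H^2+\norm{\nu}_{H^2_zL^4_{xy}}^4\big)\norm{U}_H^2+C\norm{f}_{V'}^2.
\]
Gronwall's inequality produces the $L^\infty(0,T;H)$ bound with exponential factor $e^{C(T+\norm{\partial_z\omega}_{L^2(0,T;H)}^2+\norm{\nu}_{L^4(0,T;H^2_zL^4_{xy})}^4)}$, and integrating the coercive term over $[0,T]$ gives the $L^2(0,T;V)$ bound with the same factor, which is the claim. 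I expect the main obstacle to be exactly the bookkeeping in the $\tfrac1\varepsilon U_3\nu$ term: keeping all constants uniform in both $\varepsilon$ and $\delta$ while respecting that $\psi$ is nonlocal in $z$ and enjoys only a single horizontal derivative, so that the highest-order contributions (those involving $\partial_z^2\psi$ and $\partial_z^2\nu$) must be matched against the available dissipation $\norm{\nabla_H\partial_z U}_{L^2}^2$ rather than against regularity $U$ does not possess.
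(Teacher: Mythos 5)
Your proposal is correct and follows essentially the same route as the paper: an $L^2$-energy estimate plus a $\partial_z$-differentiated estimate (i.e.\ testing with $U$ and $\partial_z U$), cancellation of the $\omega\,\partial_z U$ transport part via integration by parts, elimination of the singular $\tfrac{1}{\varepsilon}$ through the identities \eqref{eq:Weps} for $\tfrac{1}{\varepsilon}U_3$, the anisotropic H\"older/Ladyzhenskaya inequalities with $H^1_z\hookrightarrow L^\infty_z$, and Young plus Gr\"onwall producing exactly the exponents $\norm{\partial_z\omega}_H^2$ and $\norm{\nu}_{H^2_zL^4_{xy}}^4$. The only cosmetic difference is that you package the two tests as a single pairing with $U-\partial_z^2U$, which the paper carries out as two separate steps.
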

\begin{proof}
\textit{Step 1 ($L^2$-estimate):}
Multiplying the first equation in \eqref{eq:transport} by $U$ and integrating over $\Omega$ gives
\begin{align}\label{eq:L2energy}
\tfrac{1}{2} \tfrac{d}{dt} \norm{U}_{L^2}^2 + \norm{\nabla_{\delta} U}^2_{L^2} = \langle f,U \rangle_{L^2} 
- \langle \partial_z(\omega U),U \rangle_{L^2}
- \langle \partial_z (\tfrac{1}{\varepsilon} U_3  \nu),U\rangle_{L^2},
\end{align}
where we integrated by parts and by the duality pairing in $L^2_zH^1_{xy}$  
and Young's inequality
\begin{align*}
\langle f,U \rangle_{L^2} 
\leq \norm{f}_{L^2_zH^{-1}_{xy}}\norm{U}_{L^2_zH^{1}_{xy}} 
\leq
\tfrac{1}{12}(\norm{U}_{L^2}^2 + \norm{\nabla_H  U}_{L^2}^2) + 3\norm{f}_{L^2_zH^{-1}_{xy}}^2.
\end{align*}
Integration by parts, anisotropic Hölder's inequalities \eqref{eq:Holder}, the 2-dimensional Ladyzhenskaya's inequality \eqref{eq:Ladyzhenskaya}, and Young's inequality yield
\begin{align*}
\langle \partial_z(\omega U),U \rangle_{L^2} &=
\langle (\partial_z\omega) U,U \rangle_{L^2}
+\tfrac{1}{2}\langle \omega,\partial_z |U|^2 \rangle_{L^2}\\
 &=\tfrac{1}{2} \langle (\partial_z\omega),|U|^2 \rangle_{L^2} \\
&\leq \tfrac{1}{2}  \norm{\partial_z\omega}_{L^\infty_zL^2_{xy}}\norm{U}^2_{L^2_zL^4_{xy}} \\
&\leq C \norm{\partial_z\omega}_{L^\infty_zL^2_{xy}}\norm{U}_{L^2_zL^2_{xy}}\norm{U}_{L^2_zH^1_{xy}} \\
&\leq \tfrac{1}{12}\norm{\nabla_H U}^2_{L^2} + (\tfrac{1}{12}+3C^2 \norm{\partial_z\omega}^2_{L^\infty_zL^2_{xy}})\norm{U}^2_{L^2_zL^2_{xy}}.  
\end{align*}
	For the term $	\partial_z (\tfrac{1}{\varepsilon} U_3 \nu)$, one has by $\divergence_{\varepsilon} U=0$ and the parity conditions for $U$ that
	\begin{align*}
	\partial_z \tfrac{1}{\varepsilon}U_3 = -\divergence_H (U_1,U_2) \quad \hbox{and} \quad \tfrac{1}{\varepsilon}U_3(x,y,z) = -\int_{-1}^z\divergence_H (U_1,U_2)(x,y,\xi) d\xi, \quad  (x,y,z)\in \Omega,
	\end{align*}
which implies
\begin{align*}
\langle \partial_z (\tfrac{1}{\varepsilon}U_3 \nu),U \rangle_{L^2} 
&= \langle (\tfrac{1}{\varepsilon}\partial_z  U_3) \nu,U \rangle_{L^2}
+
\langle \tfrac{1}{\varepsilon}  U_3 \partial_z\nu,U \rangle_{L^2} \\
&= -\langle \nu\divergence_H (U_1,U_2),U \rangle_{L^2}
-
\langle \partial_z\nu \int_{-1}^z\divergence_H (U_1,U_2)(\cdot,\cdot,\xi) d\xi 
 ,U \rangle_{L^2}.
\end{align*}	
One estimates similar to the above using in addition the one-dimensional Sobolev embedding $H^1_z\hookrightarrow L^\infty_z$
\begin{align*}
|\langle \nu\divergence_H (U_1,U_2),U \rangle_{L^2}| &\leq \norm{\nabla_H U}_{L^2_zL^2_{xy}} \norm{\nu}_{L^\infty_zL^4_{xy}} \norm{U}_{L^2_zL^4_{xy}} \\
&\leq C\norm{\nabla_H U}_{L^2} \norm{\nu}_{H^1_zL^4_{xy}} \norm{U}_{L^2}^{1/2}
\norm{U}_{L^2_zH^1_{xy}}^{1/2}, \\
&= C \norm{\nu}_{H^1_zL^4_{xy}} 
(\norm{U}_{L^2}^4\norm{\nabla_H U}_{L^2}^4+\norm{U}_{L^2}^2\norm{\nabla_H U}_{L^2}^6)^{1/4}, \\
|\langle \partial_z\nu\int_{-1}^z\divergence_H (U_1,U_2)(\cdot,\cdot,\xi) d\xi 
,U \rangle_{L^2}
|&\leq \norm{\int_{-1}^z\divergence_H (U_1,U_2)(x,y,\xi) d\xi}_{L^\infty_{z}L^2_{xy}} \norm{\partial_z \nu}_{L^2_z L^4_{xy}}\norm{U}_{L^2_z L^4_{xy}} \\
&\leq C \norm{\nabla_H U}_{L^2_zL^2_{xy}} \norm{\partial_z \nu}_{L^2_z L^4_{xy}} \norm{U}_{L^2}^{1/2}
\norm{U}_{L^2_zH^1_{xy}}^{1/2}\\
&= C \norm{\nu}_{H^1_zL^4_{xy}} 
(\norm{U}_{L^2}^4\norm{\nabla_H U}_{L^2}^4+\norm{U}_{L^2}^2\norm{\nabla_H U}_{L^2}^6)^{1/4},
\end{align*}
and hence by Young's inequality for a constant $C>0$
\begin{align*}
|\langle \partial_z (\varepsilon^{-1}U_3 \nu),U \rangle_{L^2}| \leq
\tfrac{1}{12}\norm{\nabla_H U}_{L^2}^2 + C(1+ \norm{\nu}^4_{H^1_z L^4_{xy}})\norm{U}^2_{L^2}.
\end{align*}
Applying the above estimates to the right hand side in \eqref{eq:L2energy} and absorbing the terms with $\norm{\nabla_HU}_{L^2}^2$ into the left hand side gives for a  constant $C>0$
\begin{align*}
\frac{1}{2} \frac{d}{dt}\norm{U}_{L^2}^2 + \frac{3}{4}\norm{\nabla_\delta U}_{L^2}^2 \leq C \norm{f}_{L^2_zH^{-1}_{xy}}^2
+ C(1+ \norm{\nu}^4_{H^1_z L^4_{xy}}+ \norm{\omega_z}^2_{L^\infty_zL^4_{xy}})\norm{U}^2_{L^2}.
\end{align*}
\textit{Step 2 ($H$-estimate):}
Differentiating equation \eqref{eq:transport} with respect to $z$, multiplying the resulting equation by $\partial_z U$ and integrating over $\Omega$ gives
\begin{align}\label{eq:Henergy}
\frac{1}{2} \frac{d}{dt}\norm{U}_{L^2}^2 + \norm{\nabla_\delta \partial_z U}_{L^2}^2 = \langle \partial_z f, \partial_zU\rangle_{L^2}
-\langle \partial_z^2 (\omega U), \partial_zU\rangle_{L^2}   
-\langle \partial_z^2 (\tfrac{1}{\varepsilon}U_3\nu), \partial_zU\rangle_{L^2}.   
\end{align} 
The first term can be estimated by the duality pairing in $L_z^2H_{xy}^1$ and Young's inequality by \begin{align*}
\langle \partial_z f, \partial_zU\rangle_{L^2} &\leq \norm{\partial_z f}_{L^2_zH^{-1}_{xy}} \norm{\partial_z U}_{L^2_zH^{1}_{xy}} \\
&\leq \tfrac{1}{12} \norm{\nabla_H U}_{H_z^1L^2_{xy}}^2+ \tfrac{1}{12} \norm{U}_{H_z^1L^2_{xy}}^2    + C\norm{\partial_z f}_{L^{2}_zH_{xy}^{-1}}^2
\end{align*}
for some $C>0$.
For the second term note that $\partial_z^2 (\omega U)= (\partial_z^2\omega) U + 2  (\partial_z\omega) (\partial_z U) + \omega (\partial_z^2 U)$, then by the anisotropic Hölder's and Ladyzhenskaya inequalities \eqref{eq:Holder} and \eqref{eq:Ladyzhenskaya}, respectively,  and $H^1_z\hookrightarrow L^\infty_z$ 
\begin{align*}
\langle (\partial_z^2\omega) U, \partial_zU\rangle_{L^2} &\leq \norm{\partial_z^2\omega}_{L_z^2L^2_{xy}} \norm{U}_{L^\infty_zL^4_{xy}}  
\norm{\partial_z U}_{L^2_zL^4_{xy}}\\
&\leq \norm{\partial_z^2\omega}_{L_z^2L^2_{xy}} \norm{U}_{H^1_zL^2_{xy}}\norm{U}_{H^1_zH^1_{xy}}.  
\end{align*}
Then, using integration by parts
\begin{align*}
\langle 2  (\partial_z\omega) (\partial_z U) + \omega (\partial_z^2 U), \partial_zU\rangle_{L^2} &= 
2\langle   (\partial_z\omega) (\partial_z U),\partial_z U\rangle + \langle\omega, \tfrac{1}{2}\partial_z|\partial_z U|^2\rangle_{L^2} \\
&=\tfrac{3}{2} \langle  \partial_z\omega \partial_z U, \partial_zU\rangle_{L^2} \\
&\leq \tfrac{3}{2} \norm{\partial_z \omega}_{L^\infty_z L^2_{xy}}
\norm{\partial_z U}_{L^2_z L^4_{xy}}^2 \\
&\leq C \norm{\partial_z \omega}_{H^1_z L^2_{xy}}
\norm{U}_{H^1_zL^2_{xy}}\norm{U}_{H^1_zH^1_{xy}}.
\end{align*}
Hence,
\begin{align*}
\langle \partial_z^2 (\omega U), \partial_zU\rangle_{L^2} &\leq \tfrac{1}{12}\norm{\nabla_H U}_{H}^2+  C (1+\norm{\partial_z \omega}_{H^1_{z}L^2_{xy}}^2) \norm{U}_{V}\norm{U}_{H}^2.  
\end{align*}
Now, in the  third term using the product rule and the condition $\divergence_\varepsilon U=0$
\begin{align*}
\partial_z^2 (\tfrac{1}{\varepsilon}U_3\nu) &= 
 \tfrac{1}{\varepsilon}U_3\partial_z^2\nu
+ \tfrac{2}{\varepsilon}\partial_zU_3 \partial_z\nu
+ \tfrac{1}{\varepsilon}(\partial_z^2U_3) \nu \\
&= -\int_{-1}^{z}\divergence_H (U_1,U_2)(\cdot,\cdot,\xi)d\xi\partial_z^2\nu
- 2 \divergence_H (U_1,U_2) \partial_z\nu
- \partial_z \divergence_H (U_1,U_2) \nu.
\end{align*}
Then, the third term can be estimated similarly as above using the embedding $H^1_z\hookrightarrow L^\infty_z$ by
\begin{align*}
\langle \tfrac{1}{\varepsilon}U_3\partial_z^2\nu, \partial_zU\rangle_{L^2} 
&\leq C\norm{\nabla_H U}_{L_z^2L^2_{xy}}\norm{\nu}_{H^2_{z}L^4_{xy}} \norm{\partial_z U}_{L^2_{z}L^4_{xy}}, \\
 \langle  \tfrac{2}{\varepsilon}\partial_zU_3 \partial_z\nu, \partial_zU\rangle_{L^2}&\leq 2\norm{\nabla_H U}_{L^\infty_zL_{xy}^2} \norm{\partial_z \nu}_{L^2_zL^4_{xy}}\norm{\partial_z U}_{L^2_zL^4_{xy}} \\
 &\leq 
 C\norm{\nabla_H U}_{H^1_zL^2_{xy}} \norm{ \nu}_{H^1_zL^4_{xy}}\norm{U}_{H^1_zL^2_{xy}}^{1/2}\norm{U}_{H^1_zH^1_{xy}}^{1/2}, \\
 \langle  \tfrac{1}{\varepsilon}(\partial_z^2U_3) \nu, \partial_zU\rangle_{L^2}&\leq
 \norm{\partial_z \divergence_H U}_{L^2_zL^2_{xy}}\norm{\nu}_{L^\infty_{z}L^4_{xy}} \norm{\partial_z U}_{L^2_{z}L^4_{xy}} \\
 &\leq C \norm{\partial_z\nabla_H U}_{L^2} \norm{\nu}_{H^1_zL^4_{xy}} \norm{U}_{H^1_zL^2_{xy}}^{1/2}\norm{U}_{H_z^1H^1_{xy}}^{1/2}. 
\end{align*}
Together with Young's inequality 
\begin{align*}
\langle \partial_z^2 (\tfrac{1}{\varepsilon}U_3\nu), \partial_z U\rangle_{L^2} \leq \tfrac{1}{12}\norm{\nabla_H U}_{H^1_zL^2_{xy}}^2 + C(1+ \norm{\nu}_{H^1_zL^4_{xy}}^4)\norm{U}_{H^1_{z}L^2_{xy}}^2,
\end{align*}
and adding all three terms and compensating the terms with $\norm{\nabla_H U}_{H^1_zL^2_{xy}}^2$ into the left hand side of \eqref{eq:Henergy} gives
\begin{align*}
\frac{1}{2} \frac{d}{dt}\norm{\partial_z U}_{L^2}^2 + \frac{3}{4}\norm{\nabla_\delta \partial_z U}_{L^2}^2 \leq  
C(1+ \norm{\nu}_{H^2_zL^4_{xy}}^4 +\norm{\partial_z\omega}_{H_z^1L^2_{xy}}^2)\norm{U}_{H^1_{z}L^2_{xy}}^2 + C\norm{\partial_z f}_{L^2_zH^{-1}_{xy}}
\end{align*} 
which implies together with the $L^2$-estimate the claim via Gr\"onwall's inequality.
\end{proof}

\subsection{Embeddings for space-time-spaces}

\begin{lemma}\label{lemma:emdebbingIE1L4}
	Let $T>0$, then there exists a constant $C>0$ such that 
	\begin{align*}
	\norm{v}_{L^4(0,T;H^1_zL^4_{xy})} &\leq C\norm{v}_{\IE_1(T)} \quad \hbox{for all }v\in \IE_1(T),\\
	\norm{v}_{L^4(0,T;H^1_zL^4_{xy})} &\leq C\norm{v}_{\IE_z(T)} \quad \hbox{for all } v\in \IE_z(T).
	\end{align*}
\end{lemma}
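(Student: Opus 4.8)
The plan is to derive both embeddings from a single pointwise-in-time estimate coming from the anisotropic Ladyzhenskaya inequality \eqref{eq:Ladyzhenskaya}. First I would use that, by definition, $H^1_z L^4_{xy}=H^1(-1,1;L^4_{per}(G))$ is a Bochner space, so that $\norm{v}_{H^1_z L^4_{xy}}^2 = \norm{v}_{L^2_z L^4_{xy}}^2 + \norm{\partial_z v}_{L^2_z L^4_{xy}}^2$. Applying \eqref{eq:Ladyzhenskaya} to both $v$ and $\partial_z v$, and then the Cauchy--Schwarz inequality to the resulting two-term sum, yields the key inequality
\begin{align*}
\norm{v}_{H^1_z L^4_{xy}}^2 \leq \norm{v}_{H^1_z L^2_{xy}}\,\norm{v}_{H^1_z H^1_{xy}}
\end{align*}
valid for a.e.\ $t\in(0,T)$.

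For the second embedding, this inequality together with Hölder's inequality in time gives
\begin{align*}
\norm{v}_{L^4(0,T;H^1_z L^4_{xy})}^4
\leq \norm{v}_{L^\infty(0,T;H^1_z L^2_{xy})}^2\, \norm{v}_{L^2(0,T;H^1_z H^1_{xy})}^2 ,
\end{align*}
and both factors on the right are controlled by $\norm{v}_{\IE_z(T)}$ directly from the definition $\IE_z(T)= L^2(0,T;H_z^1H_{xy}^1)\cap L^\infty(0,T;H_z^1L_{xy}^2)$; taking fourth roots concludes this part.

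For the first embedding, I would compare the anisotropic norms with isotropic Sobolev norms on $\Omega$: one has $\norm{v}_{H^1_z L^2_{xy}} \leq \norm{v}_{H^1(\Omega)}$ and $\norm{v}_{H^1_z H^1_{xy}} \leq C\norm{v}_{H^2(\Omega)}$, the latter because the mixed derivative $\partial_z\nabla_H v$ appearing in the $H^1_zH^1_{xy}$-norm is a second-order derivative controlled by $\norm{v}_{H^2(\Omega)}$. Inserting these into the key inequality and integrating in time as above yields
\begin{align*}
\norm{v}_{L^4(0,T;H^1_z L^4_{xy})}^4 \leq C\,\norm{v}_{L^\infty(0,T;H^1(\Omega))}^2\, \norm{v}_{L^2(0,T;H^2(\Omega))}^2 .
\end{align*}
The second factor is bounded by $\norm{v}_{\IE_1(T)}^2$ by definition, and for the first factor I would invoke the maximal-regularity trace embedding $\IE_1(T)\hookrightarrow C([0,T];X_\gamma)$ with $X_\gamma = H^1_{per}(\Omega)$, as provided by the characterization of the initial value spaces, so that $\norm{v}_{L^\infty(0,T;H^1(\Omega))} \leq C\norm{v}_{\IE_1(T)}$. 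Taking fourth roots finishes the proof.

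The only nonroutine points are the bookkeeping verification that $\norm{v}_{H^1_zH^1_{xy}}\leq C\norm{v}_{H^2(\Omega)}$ and the appeal to the trace embedding into $C([0,T];H^1_{per}(\Omega))$; since the constant $C$ is allowed to depend on $T$, no time-uniformity is needed and this embedding is standard.
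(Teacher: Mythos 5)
Your proof is correct. The second embedding is handled exactly as in the paper: pointwise-in-time Ladyzhenskaya in the $xy$-variables followed by H\"older in time against the $L^\infty_t H^1_z L^2_{xy}$ and $L^2_t H^1_z H^1_{xy}$ components of $\IE_z(T)$. For the first embedding, however, you take a genuinely different route. The paper invokes the mixed derivative theorem twice to obtain the chain $\IE_1(T)\hookrightarrow H^{1/4}(0,T;H^{3/2}(\Omega))\hookrightarrow H^{1/4}(0,T;H^1_zH^{1/2}_{xy})\hookrightarrow H^{1/4}(0,T;H^1_zL^4_{xy})$ and then uses the one-dimensional Sobolev embedding $H^{1/4}(0,T)\hookrightarrow L^4(0,T)$; you instead reuse the same anisotropic Ladyzhenskaya estimate $\norm{v}_{H^1_zL^4_{xy}}^2\leq \norm{v}_{H^1_zL^2_{xy}}\norm{v}_{H^1_zH^1_{xy}}$ (your Cauchy--Schwarz step to assemble it from the $v$ and $\partial_z v$ pieces is sound, and the direction of the inequality relating the $H^1_z$-valued norm to the two-term sum is the right one), combine it with the elementary bounds $\norm{v}_{H^1_zL^2_{xy}}\leq\norm{v}_{H^1(\Omega)}$ and $\norm{v}_{H^1_zH^1_{xy}}\leq C\norm{v}_{H^2(\Omega)}$, and close with the trace embedding $\IE_1(T)\hookrightarrow C([0,T];H^1_{per}(\Omega))$, which is legitimate here since $X_\gamma=(X_0,X_1)_{1/2,2}=H^1_{per}(\Omega)$ and the paper itself uses this trace embedding elsewhere. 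Your argument is more elementary and unifies both statements under a single pointwise inequality; what it does not deliver is the stronger intermediate statement $\IE_1(T)\hookrightarrow H^{1/4}(0,T;H^1_zL^4_{xy})$, i.e.\ fractional regularity in time, which the paper's mixed-derivative-theorem chain provides and reuses in related estimates such as \eqref{eq:embedding2}. For the lemma as stated, both proofs are equally valid.
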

\begin{proof}	
	The claim for $v\in\IE_1(T)$ follows from the mixed derivative theorem, cf. e.g. \cite[Corollary 4.5.10]{pruss2016moving}, with $\theta=1/4$ in the maximal $L^2$-regularity space and with $\theta=2/3$ to show that $H^{3/2}(\Omega)\hookrightarrow H^{1}_zH^{1/2}_{xy}$,  and Sobolev embeddings which gives the continuous embeddings
	\begin{align*}
	\IE_1(T) \hookrightarrow H^{1/4}(0,T;H^{3/2}(\Omega))\hookrightarrow
	H^{1/4}(0,T;H^{1}_zH^{1/2}_{xy})
	\hookrightarrow
	H^{1/4}(0,T;H^{1}_zL^{4}_{xy}).  
	\end{align*}
	Applying Ladyzhenskaya's inequality \eqref{eq:Ladyzhenskaya} with respect to the $xy$-variables, implies
	\begin{align*}
	\int_0^T \norm{v(t)}_{H^1_zL^4_{xy}}^4 dt &\leq C\int_0^T \norm{v(t)}_{H^1_zH^1_{xy}}^2 \norm{v(t)}_{H^1_zL^2_{xy}}^2 dt \\
	&\leq C\norm{v(t)}_{L^2(0,T;H^1_zH^1_{xy})}^2 \norm{v(t)}_{L^\infty(0,T;H^1_zL^2_{xy})}^2,
	\end{align*}
	and then  taking the forth root and using Young's inequality gives the claim for $v\in\IE_z(T)$.
\end{proof}

\section{Non-linear estimates}\label{sec:non_lin}

\subsection{Bi-linear estimates for the Navier-Stokes and primitive equations}
The non-linearities of the Navier-Stokes, the primitive and  the difference equations involve bilinear terms of the types discussed in the  following lemmata. 
\begin{lemma}[cf. Lemma 4.2 in \cite{convergence}]\label{lemma:NavierStokes_nonlinear}
	Let $T\in (0,\infty)$, then there is a constant $C>0$ such that for all $v_1,v_2\in \IE_1(T)$
	\begin{align*}
	\norm{v_1\partial_i v_2}_{\IE_0} \leq C \norm{v_1}_{\IE_1(T)} \norm{v_2}_{\IE_1(T)},\quad\hbox{where } i\in \{x,y,z\}.   
	\end{align*}
\end{lemma}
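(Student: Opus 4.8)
The plan is to control $v_1\partial_i v_2$ in $\IE_0(T)=L^2(0,T;L^2(\Omega))$ by a pointwise-in-time Hölder split in the spatial variables followed by Hölder in time, exploiting that $\Omega$ is three-dimensional so that the Sobolev embedding $H^2_{per}(\Omega)\hookrightarrow L^\infty(\Omega)$ is available, together with the trace embedding of the maximal regularity space $\IE_1(T)$ into its initial-value space $X_\gamma=H^1_{per}(\Omega)$. Recall that by definition $\IE_1(T)=L^2(0,T;X_1)\cap H^1(0,T;X_0)$ with $X_1=H^2_{per}(\Omega)$ and $X_0=L^2(\Omega)$, and that $\IE_1(T)\hookrightarrow C([0,T];X_\gamma)$ continuously.

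First I would fix $t\in(0,T)$ and bound the integrand in space. Since $v_1(t)\in H^2_{per}(\Omega)$ and $\dim\Omega=3$, the embedding $H^2_{per}(\Omega)\hookrightarrow L^\infty(\Omega)$ gives
\[
\norm{v_1(t)\,\partial_i v_2(t)}_{L^2(\Omega)} \leq \norm{v_1(t)}_{L^\infty(\Omega)}\norm{\partial_i v_2(t)}_{L^2(\Omega)} \leq C\,\norm{v_1(t)}_{H^2_{per}(\Omega)}\norm{v_2(t)}_{H^1_{per}(\Omega)},
\]
uniformly in $i\in\{x,y,z\}$, with $C$ depending only on $\Omega$. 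Next I would square, integrate in $t$, and apply Hölder in time, assigning the $L^2_t$-norm to the $H^2$-factor and the $L^\infty_t$-norm to the $H^1$-factor, obtaining
\[
\norm{v_1\,\partial_i v_2}_{\IE_0(T)} \leq C\,\norm{v_1}_{L^2(0,T;H^2_{per}(\Omega))}\,\norm{v_2}_{L^\infty(0,T;H^1_{per}(\Omega))}.
\]
The first factor is dominated by $\norm{v_1}_{\IE_1(T)}$ directly from the definition of the maximal regularity norm, while for the second I would invoke the trace embedding $\IE_1(T)\hookrightarrow C([0,T];X_\gamma)$ with $X_\gamma=H^1_{per}(\Omega)$, so that $\norm{v_2}_{L^\infty(0,T;H^1_{per}(\Omega))}\le C\norm{v_2}_{\IE_1(T)}$. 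Combining these two bounds yields the claimed estimate.

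I expect no genuine obstacle in this argument; it is a routine maximal-regularity bilinear estimate. The only points meriting care are that the spatial embedding $H^2\hookrightarrow L^\infty$ relies on $\dim\Omega=3$, and that the $L^\infty_tH^1$-control stems from the trace embedding, whose constant may depend on $T$—which is harmless since $T$ is fixed. If one prefers to treat $v_1$ and $v_2$ symmetrically, a variant is the split $\norm{v_1\partial_i v_2}_{L^2(\Omega)}\le \norm{v_1}_{L^6(\Omega)}\norm{\partial_i v_2}_{L^3(\Omega)}$, using $H^1_{per}(\Omega)\hookrightarrow L^6(\Omega)$ and the critical embedding $H^{1/2}(\Omega)\hookrightarrow L^3(\Omega)$ (so that $\norm{\partial_i v_2}_{L^3}\le C\norm{v_2}_{H^{3/2}}\le C\norm{v_2}_{H^2}$) in three dimensions; this route leads to the same conclusion with $v_1$ measured in $L^\infty_tH^1$ and $v_2$ in $L^2_tH^2$.
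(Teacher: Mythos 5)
Your argument is correct: the pointwise bound $\norm{v_1(t)\partial_i v_2(t)}_{L^2}\le C\norm{v_1(t)}_{H^2}\norm{v_2(t)}_{H^1}$ via $H^2_{per}(\Omega)\hookrightarrow L^\infty(\Omega)$ in three dimensions, followed by the $L^2_t\times L^\infty_t$ H\"older split and the trace embedding $\IE_1(T)\hookrightarrow C([0,T];X_\gamma)$ with $X_\gamma=H^1_{per}(\Omega)$, is exactly the standard route for this bilinear estimate. The paper itself gives no proof here but defers to Lemma 4.2 of its predecessor (where the general $L^q_t$-$L^p_x$ case is handled with mixed-derivative/interpolation embeddings); in the present $L^2$-$L^2$ setting your simpler argument suffices and nothing is missing.
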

\begin{proof}[Sketch of the proof of Proposition~\ref{prop:NS_eps}] 
	Lemma~\ref{lemma:NavierStokes_nonlinear} implies together with Proposition~\ref{prop:maxreg} by the contraction mapping principle the local existence and uniqueness of solutions to \eqref{eq:NS_eps} in maximal $L^2$-$L^2$-regularity spaces. 
\end{proof}

\begin{proof}[Sketch of the proof of Proposition~\ref{prop:NS_H}] 
	Similarly to the above, the local existence and uniqueness of solutions to \eqref{eq:NS_eps3} in maximal $L^2$-$L^2$-regularity spaces follows. Using the energy equality and weak-strong-uniqueness results, this solution can be extended to a global one. 
	The statement for the scaled Stokes equations follows from Proposition~\ref{prop:2DNS}.
\end{proof}

\begin{lemma}\label{lemma:divhv1v2}
	Let $T>0$, then there exists a constant $C>0$ such that 
\begin{align*}
\norm{\divergence_H (v_1 v_2)}_{L^2(0,T;H^1_zH^{-1}_{xy})} \leq  \norm{v_1}_{L^4(0,T;H^1_zL^4_{xy})} \norm{v_2}_{L^4(0,T;H^1_zL^4_{xy})}, 
\end{align*}
for all $v_1 \in L^4(0,T;H^1_zL^4_{xy})$ and  $v_2\in L^4(0,T;H^1_zL^4_{xy})^2$
\end{lemma}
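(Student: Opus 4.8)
The plan is to exploit that the horizontal divergence maps $L^2_{xy}$ boundedly into $H^{-1}_{xy}$, so that the whole estimate reduces to controlling the product $v_1 v_2$ and its vertical derivative in $L^2(\Omega)$. Since each of $\partial_x,\partial_y$ is bounded from $L^2_{xy}$ into $H^{-1}_{xy}$ by duality with $H^1_{xy}$, and since these operators commute with $\partial_z$ and with the $z$-integration, I would first record the two inequalities
\[
\norm{\divergence_H g}_{L^2_z H^{-1}_{xy}} \leq C\norm{g}_{L^2} \quad\hbox{and}\quad \norm{\partial_z \divergence_H g}_{L^2_z H^{-1}_{xy}} = \norm{\divergence_H \partial_z g}_{L^2_z H^{-1}_{xy}} \leq C\norm{\partial_z g}_{L^2},
\]
valid for any vector field $g\in H^1_z L^2_{xy}$. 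Applied to $g = v_1 v_2$, together with the product rule $\partial_z(v_1 v_2) = (\partial_z v_1) v_2 + v_1 (\partial_z v_2)$, this bounds the full norm by
\[
\norm{\divergence_H(v_1 v_2)}_{H^1_z H^{-1}_{xy}} \leq C\big(\norm{v_1 v_2}_{L^2} + \norm{(\partial_z v_1) v_2}_{L^2} + \norm{v_1 (\partial_z v_2)}_{L^2}\big)
\]
pointwise in time.

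Next I would estimate each of these three $L^2(\Omega)$-products by the anisotropic H\"older inequality \eqref{eq:Holder} in the horizontal variables, splitting $L^2_{xy}=L^4_{xy}\cdot L^4_{xy}$, and by the one-dimensional Sobolev embedding $H^1_z\hookrightarrow L^\infty_z$ in the vertical variable (exactly as already used in the proof of Proposition~\ref{prop:transport_lin}). For instance, one has $\norm{(\partial_z v_1) v_2}_{L^2_z L^2_{xy}} \leq \norm{\partial_z v_1}_{L^2_z L^4_{xy}}\,\norm{v_2}_{L^\infty_z L^4_{xy}} \leq C\norm{v_1}_{H^1_z L^4_{xy}}\,\norm{v_2}_{H^1_z L^4_{xy}}$, and the terms $\norm{v_1 v_2}_{L^2}$ and $\norm{v_1 (\partial_z v_2)}_{L^2}$ are handled symmetrically, always placing in $L^\infty_z L^4_{xy}$ (via $H^1_z\hookrightarrow L^\infty_z$) whichever factor carries no $z$-derivative and the other in $L^2_z L^4_{xy}$. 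This yields the pointwise-in-time bound
\[
\norm{\divergence_H(v_1 v_2)(t)}_{H^1_z H^{-1}_{xy}} \leq C\norm{v_1(t)}_{H^1_z L^4_{xy}}\,\norm{v_2(t)}_{H^1_z L^4_{xy}}.
\]

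Finally I would take the $L^2$-norm in time and apply H\"older's inequality with the conjugate pair $\tfrac12=\tfrac14+\tfrac14$ to distribute the two factors into $L^4(0,T)$, which gives precisely the claimed bound $\norm{v_1}_{L^4(0,T;H^1_z L^4_{xy})}\,\norm{v_2}_{L^4(0,T;H^1_z L^4_{xy})}$. The only step requiring genuine care is the vertical derivative: because $\divergence_H(v_1 v_2)$ must be controlled in $H^1_z$ rather than merely $L^2_z$, one must differentiate the product in $z$ and ensure that the factor carrying $\partial_z$ lands in $L^2_z L^4_{xy}$ while its companion is placed in $L^\infty_z L^4_{xy}$ through $H^1_z\hookrightarrow L^\infty_z$. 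This is exactly where the full $H^1_z L^4_{xy}$-regularity of both inputs is consumed, and one checks that no $\varepsilon$- or $\delta$-dependence enters any of the constants.
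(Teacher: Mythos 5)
Your proof is correct and follows essentially the same route as the paper: bound $\divergence_H$ from $L^2_{xy}$ into $H^{-1}_{xy}$, control the product $v_1v_2$ in $H^1_zL^2_{xy}$, and finish with H\"older in time with $\tfrac12=\tfrac14+\tfrac14$. The only difference is cosmetic — the paper compresses your product-rule-plus-$H^1_z\hookrightarrow L^\infty_z$ argument into the single phrase ``$H^1_z$ is an algebra,'' which is exactly the estimate you spell out.
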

\begin{proof}
	One estimates using that $H^1_z$ is an algebra
	\begin{align*}
	\norm{\divergence_H (v_1 v_2)}_{L^2(0,T;H^1_zH^{-1}_{xy})} \leq \norm{v_1 v_2}_{L^2(0,T;H^1_zL^2_{xy})} \leq  \norm{v_1}_{L^4(0,T;H^1_zL^4_{xy})}\norm{v_2}_{L^4(0,T;H^1_zL^4_{xy})}. \qquad \qedhere
	\end{align*}
\end{proof}
Recall that $\tfrac{1}{\varepsilon}w(v)$ is given by \eqref{eq:Weps} for $v\in \IE_H(T), \IE_z(T), \IE_1(T)$ since with this regularity $\divergence_{H}v$ is integrable with respect to the $z$-variable for almost all $t\in 0,T$ and $(x,y)\in G$. 
\begin{lemma}\label{lemma:vnablaw}
		Let $T>0$ and $\varepsilon>0$, then there exists a constant $C>0$ such that 
		\begin{align*}
	\norm{v_1 \cdot \nabla_H v_2}_{\IE_0(T)} &\leq C \norm{v_1}_{\IE_H(T)} \norm{v_2}_{\IE_z(T)} \quad \hbox{for all }v_1\in \IE_H, v_2\in \IE_z(T), \\
	\norm{\tfrac{1}{\varepsilon}w(v_1) \cdot \partial_z v_2}_{\IE_0(T)} &\leq C \norm{v_1}_{\IE_H(T)} \norm{v_2}_{\IE_z(T)} \quad \hbox{for all } v_1\in \IE_H, v_2\in \IE_z(T), \\
	\norm{v_1 \cdot \nabla_H v_2}_{\IE_0(T)} &\leq C \norm{v_1}_{\IE_H(T)} \norm{v_2}_{\IE_1(T)}\quad \hbox{for all } v_1\in \IE_H, v_2\in \IE_1(T), \\ 
	\norm{\tfrac{1}{\varepsilon}w(v_1) \cdot \partial_z v_2}_{\IE_0(T)} &\leq C \norm{v_1}_{\IE_H(T)} \norm{v_2}_{\IE_1(T)}\quad \hbox{for all } v_1\in \IE_H, v_2\in \IE_1(T).
	\end{align*}
	\end{lemma}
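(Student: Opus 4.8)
The plan is to reduce all four estimates to pointwise-in-time anisotropic bounds in $L^2(\Omega)=L^2_zL^2_{xy}$ and then integrate in time, distributing the available space-time regularity so that the product of the resulting time norms lands in $L^2(0,T)$. The pointwise building blocks are the anisotropic Hölder inequality \eqref{eq:Holder}, the two-dimensional Ladyzhenskaya inequality \eqref{eq:Ladyzhenskaya}, the one-dimensional Sobolev embedding $H^1_z\hookrightarrow L^\infty_z$, and the two-dimensional Agmon inequality $\norm{f}_{L^\infty_{xy}}\leq C\norm{f}_{H^1_{xy}}^{1/2}\norm{f}_{H^2_{xy}}^{1/2}$. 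For the time integration I would use that $\IE_H(T)$ embeds, by the trace theorem, into $L^\infty(0,T;L^2_zH^1_{xy})$ and, by the mixed derivative theorem together with $H^{1/4}(0,T)\hookrightarrow L^4(0,T)$, into $L^4(0,T;L^2_zH^{3/2}_{xy})$; likewise $\IE_z(T)$ and $\IE_1(T)$ embed into $L^4(0,T;H^1_zL^4_{xy})$ by Lemma~\ref{lemma:emdebbingIE1L4}, while $\IE_1(T)\hookrightarrow L^\infty(0,T;H^1(\Omega))$.

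For the two terms containing $\tfrac{1}{\varepsilon}w(v_1)$ I would first invoke the representation \eqref{eq:Weps}, which gives $\tfrac{1}{\varepsilon}w(v_1)=-\int_{-1}^z\divergence_H v_1$ and hence, crucially, an $L^\infty_z$ bound $\norm{\tfrac{1}{\varepsilon}w(v_1)}_{L^\infty_zL^4_{xy}}\leq C\norm{v_1}_{L^2_zH^{3/2}_{xy}}$ that is uniform in $\varepsilon$ (using $H^{1/2}_{xy}\hookrightarrow L^4_{xy}$). Splitting $\norm{\tfrac{1}{\varepsilon}w(v_1)\,\partial_z v_2}_{L^2}\leq\norm{\tfrac{1}{\varepsilon}w(v_1)}_{L^\infty_zL^4_{xy}}\norm{\partial_z v_2}_{L^2_zL^4_{xy}}$ and applying \eqref{eq:Ladyzhenskaya} to $\partial_z v_2$ yields $\norm{\partial_z v_2}_{L^2_zL^4_{xy}}\leq C\norm{v_2}_{H^1_zL^2_{xy}}^{1/2}\norm{v_2}_{H^1_zH^1_{xy}}^{1/2}$. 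In time the factor from $v_1$ sits in $L^4$ (via $\IE_H(T)\hookrightarrow L^4(0,T;L^2_zH^{3/2}_{xy})$), the energy norm $\norm{v_2}_{H^1_zL^2_{xy}}^{1/2}$ in $L^\infty$ and $\norm{v_2}_{H^1_zH^1_{xy}}^{1/2}$ in $L^4$, so the product lands in $L^2(0,T)$; replacing $\IE_z(T)$ by $\IE_1(T)$ only makes this easier.

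For the transport terms $v_1\cdot\nabla_H v_2$ I would write $\norm{v_1\nabla_H v_2}_{L^2}\leq\norm{v_1}_{L^2_zL^\infty_{xy}}\norm{\nabla_H v_2}_{L^\infty_zL^2_{xy}}$, bounding the first factor by Agmon ($\leq C\norm{v_1}_{L^2_zH^1_{xy}}^{1/2}\norm{v_1}_{L^2_zH^2_{xy}}^{1/2}$) and the second by $H^1_z\hookrightarrow L^\infty_z$. When $v_2\in\IE_1(T)$ the decisive point is that $\IE_1(T)\hookrightarrow L^\infty(0,T;H^1(\Omega))$ controls $\nabla_H v_2$ in $L^\infty(0,T;L^2(\Omega))$, so after the Agmon split one factor of $v_1$ and one factor of $v_2$ sit in $L^\infty(0,T)$ and the two remaining ones in $L^4(0,T)$, again closing in $L^2(0,T)$.

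The hard part is precisely the first estimate, $v_1\cdot\nabla_H v_2$ with $v_2$ only in the energy class $\IE_z(T)$: here $\IE_z(T)$ is not a maximal-regularity space, so $\nabla_H v_2$ is controlled only in $L^2(0,T)$ and never in $L^\infty(0,T)$, which makes the time bookkeeping tight. I would handle it by interpolating $v_1$ between its trace bound in $L^\infty(0,T;L^2_zH^1_{xy})$ and its $L^2(0,T;L^2_zH^2_{xy})$ bound and balancing the Agmon exponents against the energy norms of $v_2$ so that exactly two factors fall into $L^4(0,T)$. This is the step where the sharp interplay of the anisotropic Sobolev embeddings with the parabolic time-integrability must be exploited, and where I expect the estimate to be most delicate, possibly drawing on the combined regularity $v_2\in\IE_z(T)\cap\IE_H(T)$ available for the solutions to which the lemma is ultimately applied.
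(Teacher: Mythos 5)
Your treatment of the two terms involving $\tfrac{1}{\varepsilon}w(v_1)$ is essentially the paper's argument: the representation \eqref{eq:Weps} turns $\tfrac{1}{\varepsilon}w(v_1)$ into $-\int_{-1}^z\divergence_H v_1$, which is bounded in $L^\infty_zL^4_{xy}$ by $\norm{\nabla_H v_1}_{L^2_zL^4_{xy}}$, and the time integration closes via $\IE_H(T)\hookrightarrow L^4(0,T;L^2_zH^{1,4}_{xy})$ (the embedding \eqref{eq:embedding2}) paired with $\partial_z v_2\in L^4(0,T;L^2_zL^4_{xy})$ from Lemma~\ref{lemma:emdebbingIE1L4}; your Ladyzhenskaya-plus-$L^\infty_t/L^4_t$ split of $\partial_z v_2$ is just the proof of that lemma written out. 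The genuine problem is your treatment of $v_1\cdot\nabla_H v_2$, and in particular the first inequality, which you correctly identify as the hard case but do not close. The obstruction is not on the $v_1$ side, so no balancing of Agmon exponents can repair it: $\IE_z(T)=L^2(0,T;H^1_zH^1_{xy})\cap L^\infty(0,T;H^1_zL^2_{xy})$ controls a full horizontal derivative of $v_2$ only in $L^2$ in time (interpolating its two defining norms gives $L^{2/\theta}(0,T;H^1_zH^\theta_{xy})$, which reaches $\nabla_H v_2$ only at $\theta=1$). Your split $\norm{v_1}_{L^2_zL^\infty_{xy}}\norm{\nabla_H v_2}_{L^\infty_zL^2_{xy}}$ therefore yields, after the Agmon interpolation of $v_1$, time exponents $\tfrac14+\tfrac12=\tfrac34$, i.e.\ a product in $L^{4/3}(0,T)$ rather than $L^2(0,T)$, and this cannot be improved by redistributing regularity within $v_1$.

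The paper's proof avoids this by never asking $\IE_z$ to control $\nabla_H v_2$: its H\"older split for $v_1\cdot\nabla_H v_2$ is $\norm{\cdot}_{L^4(0,T;L^\infty_zL^4_{xy})}\cdot\norm{\nabla_H(\cdot)}_{L^4(0,T;L^2_zL^4_{xy})}$, where the \emph{undifferentiated} factor is placed in $L^4(0,T;L^\infty_zL^4_{xy})$ — exactly what Lemma~\ref{lemma:emdebbingIE1L4} gives for $\IE_z(T)$ and $\IE_1(T)$ via $H^1_z\hookrightarrow L^\infty_z$ — while the factor carrying $\nabla_H$ is measured through $\IE_H(T)\hookrightarrow L^4(0,T;L^2_zH^{1,4}_{xy})$. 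In other words, the $\IE_H$-norm is attached to the horizontally differentiated factor and the $\IE_z$- (or $\IE_1$-) norm to the other one; read against the statement this transposes the roles of $v_1$ and $v_2$ in the first and third inequalities, but it is the only pairing for which the exponents close, and it is the pairing actually used in Proposition~\ref{prop:estimate_F}, where every function involved lies in $\IE_H\cap\IE_z$ or in $\IE_1$. Your closing remark that one may need ``the combined regularity $v_2\in\IE_z(T)\cap\IE_H(T)$'' points in the right direction, but as written your argument for the first estimate does not constitute a proof, and the proposed fix targets the wrong factor.
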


\begin{proof}
By anisotropic Hölder's inequality in time and space, the embedding $H^1_z\hookrightarrow L_z^\infty$,  the Poincar\'e inequality for $w(v_1)$, and \eqref{eq:Weps}
	\begin{align*}
	\norm{v_1 \cdot \nabla_H v_2}_{\IE_0(T)} & \leq \norm{v_1}_{L^4(0,T;L^\infty_zL^4_{xy})}
	\norm{\nabla_Hv_2}_{L^4(0,T;L^2_zL^4_{xy})}, \quad \hbox{and}\\
	\norm{\tfrac{1}{\varepsilon}w(v_1) \cdot \partial_z v_2}_{\IE_0(T)} & \leq \norm{\tfrac{1}{\varepsilon}w(v_1)}_{L^4(0,T;L^\infty_zL^4_{xy})}
	\norm{\partial_z v_2}_{L^4(0,T;L^2_zL^4_{xy})} \\
	& \leq \norm{\nabla_H v_1}_{L^4(0,T;L^2_zL^4_{xy})}
	\norm{\partial_z v_2}_{L^4(0,T;L^2_zL^4_{xy})}.
	\end{align*}
	Now the remaining estimates follow by the mixed derivative theorem and Sobolev embeddings
	\begin{align}\label{eq:embedding2}
	\IE_H(T) \hookrightarrow H^{1/4}(0,T;L^2_zH^{3/2}_{xy}) \hookrightarrow L^{4}(0,T;L^2_zH^{1,4}_{xy}),
	\end{align}
	 and Lemma~\ref{lemma:emdebbingIE1L4}.
\end{proof}

\begin{lemma}\label{lemma:unablaw}
	Let 
	$u=(v,w(v))\in \IE_1(T)$,  then there is a constant $C>0$ such that
	\begin{align*}
	\norm{u\cdot \nabla w(v)}_{L^2(0,T;H^1_zH^{-1}_{xy})} \leq C
		\norm{u}_{L^\infty(0,T;H^1(\Omega))}
			\norm{u}_{L^2(0,T;H^2(\Omega))}.
	\end{align*}
\end{lemma}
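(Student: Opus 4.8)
The plan is to exploit incompressibility, $\divergence u=\divergence_H v+\partial_z w=0$, i.e.\ $\partial_z w=-\divergence_H v$, in order to convert every vertical derivative acting on a product into a horizontal one; this is essential because the target space $H^1_z H^{-1}_{xy}$ carries full regularity in the vertical variable but only negative regularity in the horizontal ones. Writing $u=(v,w)$ and combining $v\cdot\nabla_H w=\divergence_H(wv)-w\,\divergence_H v$ with $w\,\partial_z w=-w\,\divergence_H v$, I obtain the identity
\begin{align*}
u\cdot\nabla w(v)=\divergence_H(wv)-2\,w\,\divergence_H v ,
\end{align*}
in which no $\partial_z$ of a product survives. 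This is the decomposition on which the estimate is built.

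For the genuine horizontal-divergence term $\divergence_H(wv)$ I apply Lemma~\ref{lemma:divhv1v2} directly, with the scalar factor $w$ and the vector factor $v$, which gives $\norm{\divergence_H(wv)}_{L^2(0,T;H^1_z H^{-1}_{xy})}\le C\norm{w}_{L^4(0,T;H^1_z L^4_{xy})}\norm{v}_{L^4(0,T;H^1_z L^4_{xy})}$. The remaining term $w\,\divergence_H v$ is \emph{not} of horizontal-divergence type, so it cannot be fed into Lemma~\ref{lemma:divhv1v2}; instead I extract the negative horizontal regularity by duality, integrating by parts in $(x,y)$ to move $\divergence_H$ onto the test function, and I differentiate once more in $z$ to reach the full $H^1_z$ norm, at which stage $\partial_z w=-\divergence_H v$ and $\partial_z^2 w=-\divergence_H\partial_z v$ are invoked again to trade the surviving vertical derivatives for horizontal ones. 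The anisotropic Hölder inequality \eqref{eq:Holder}, the two-dimensional Ladyzhenskaya inequality \eqref{eq:Ladyzhenskaya}, the embedding $H^1_z\hookrightarrow L^\infty_z$, and the vertical Poincaré/antiderivative bound for $w=w(v)$ (namely $\norm{w}_{L^\infty_z L^q_{xy}}\le C\norm{\nabla_H v}_{L^2_z L^q_{xy}}$, which follows from \eqref{eq:w}) together aim to reduce this term as well to the scale $\norm{u}_{L^4(0,T;H^1_z L^4_{xy})}^2$.

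It then remains to pass from the $L^4(0,T;H^1_z L^4_{xy})$ scale to the norms in the statement. Here I use the estimate underlying Lemma~\ref{lemma:emdebbingIE1L4}, namely the Ladyzhenskaya splitting $\norm{g}_{L^4(0,T;H^1_z L^4_{xy})}\le C\norm{g}_{L^2(0,T;H^1_z H^1_{xy})}^{1/2}\norm{g}_{L^\infty(0,T;H^1_z L^2_{xy})}^{1/2}$, together with the continuous embeddings $H^2(\Omega)\hookrightarrow H^1_z H^1_{xy}$ and $H^1(\Omega)\hookrightarrow H^1_z L^2_{xy}$. Each factor of $u$ thus contributes $\norm{u}_{L^2(0,T;H^2)}^{1/2}\norm{u}_{L^\infty(0,T;H^1)}^{1/2}$, and the product of the two factors yields exactly $\norm{u}_{L^\infty(0,T;H^1)}\norm{u}_{L^2(0,T;H^2)}$, as claimed.

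The main obstacle is precisely the vertical self-transport $w\,\partial_z w$ (equivalently $w\,\divergence_H v$, together with the vertical derivative $w\,\partial_z^2 w$ appearing in the $H^1_z$ part): it is not in horizontal-divergence form, so Lemma~\ref{lemma:divhv1v2} does not apply, and the negative horizontal norm has to be produced by hand while one full $z$-derivative, demanded by the $H^1_z$ target, is simultaneously retained. The delicate point is the anisotropic bookkeeping, which must be balanced so that the regularity lands on the asymmetric product $L^2_t H^2\times L^\infty_t H^1$ rather than on the stronger $\norm{u}_{\IE_1(T)}^2$; this is exactly why the vertical Poincaré bound for $w(v)$ and the refined Ladyzhenskaya form of Lemma~\ref{lemma:emdebbingIE1L4} — and not merely the crude embedding $\IE_1(T)\hookrightarrow L^4(0,T;H^1_z L^4_{xy})$ — are essential.
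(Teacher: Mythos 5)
Your decomposition is the same as the paper's up to rewriting: since $\partial_z w=-\divergence_H v$, your term $-2w\,\divergence_H v$ equals $\partial_z(w^2)$, which is exactly how the paper splits $u\cdot\nabla w=\divergence_H(wv)+\partial_z(w^2)$. Your treatment of the divergence-form piece via Lemma~\ref{lemma:divhv1v2} and the Ladyzhenskaya splitting of Lemma~\ref{lemma:emdebbingIE1L4} is a valid (slightly different) route to the asymmetric product $\norm{u}_{L^\infty_t H^1}\norm{u}_{L^2_t H^2}$; the paper instead bounds $\norm{\divergence_H(wv)}_{H^1_zH^{-1}_{xy}}$ by $\norm{\partial_z(wv)}_{L^2}$ via vertical Poincar\'e and estimates $(\partial_zw)v+w\partial_zv$ directly.

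The gap is in the second term. For the $H^1_z$ part you must control $\partial_z(w\,\divergence_H v)=-(\divergence_H v)^2+w\,\divergence_H\partial_z v$ in $L^2_tL^2_zH^{-1}_{xy}$, and your proposed device for the piece $w\,\divergence_H\partial_z v=-w\,\partial_z^2w$ -- integrating by parts in $(x,y)$ to move $\divergence_H$ onto the test function -- produces the commutator term $\partial_z v\cdot\nabla_H w$. Since $\nabla_H w=-\int_{-1}^z\nabla_H\divergence_H v$ costs two horizontal derivatives of $v$, it lies only in $L^2_{xy}$ and only in $L^2_t$ (it is an $H^2$-level quantity), while $\partial_z v$ is only in $L^\infty_tL^2$; the product then lands in $L^1_{xy}$, which does not embed into $H^{-1}_{xy}$ in two dimensions, and any interpolation to $L^{q}_{xy}$ with $q>1$ pushes the time integrability below $L^2_t$ (one gets $\norm{u}_{H^1}^{1-\theta}\norm{u}_{H^2}^{1+\theta}$ pointwise in time for some $\theta>0$). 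Relatedly, your claim that this term reduces to the scale $\norm{u}_{L^4(0,T;H^1_zL^4_{xy})}^2$ cannot hold: that norm contains no horizontal derivatives, whereas $w\,\divergence_H\partial_z v$ genuinely requires $\nabla_H\partial_z v\in L^2_tL^2$. The paper avoids all of this by never integrating by parts in $(x,y)$ here: it uses vertical Poincar\'e together with the Sobolev embedding $L^{6/5}_{xy}\hookrightarrow H^{-1}_{xy}$ to reduce the estimate to $\norm{2(\partial_zw)^2+2w\partial_z^2w}_{L^2_zL^{6/5}_{xy}}$, and then applies H\"older with $H^1(\Omega)\hookrightarrow L^\infty_zL^3_{xy}$, so that $w$ carries the full $L^\infty_tH^1$ weight and $\partial_z^2w=-\divergence_H\partial_z v$ the full $L^2_tH^2$ weight. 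You should replace the duality/integration-by-parts step for this term by that direct argument.
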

\begin{proof}
	Note that using that $\partial_z w=-\divergence_H v$ one obtains
	\begin{align*}
	u \cdot \nabla w= v_1\cdot \partial_x w + v_2\cdot \partial_y w + w\cdot \partial_z w 
	= \divergence_H(wv)- w\divergence_H v + \tfrac{1}{2}\partial_z w^2
	= \divergence_H(wv) +\partial_z w^2.
	\end{align*}
	Next, one estimates each term separately, first
	\begin{align*}
		\norm{\divergence_H(wv)}_{L^2(0,T;H^1_zH^{-1}_{xy})} 
		&\leq C \norm{\partial_z(wv)}_{L^2(0,T;L^2_zL^{2}_{xy})} \\
		&\leq C \norm{v\divergence_H v}_{L^2(0,T;L^2_zL^{2}_{xy})} + 
		C\norm{w\partial_z v}_{L^2(0,T;L^2_zL^{2}_{xy})}\\
		&\leq C \norm{v}_{L^2(0,T;L^\infty(\Omega))}\norm{\nabla_H v}_{L^\infty(0,T;L^2(\Omega))} + 
		C\norm{w}_{L^2(0,T;L^\infty(\Omega)}\norm{v}_{L^\infty(0,T;H^1(\Omega)}\\
		&\leq C
		\norm{u}_{L^\infty(0,T;H^1(\Omega))}
		\norm{v}_{L^2(0,T;H^2(\Omega))},
	\end{align*}
	where one uses the definition of the $H^{-1}_{xy}$-norm, Poincar\'{e}'s inequality for $\partial_z(wv)$,  Hölder's inequality and Sobolev embeddings.
	Second, using again Poincar\'{e}'s inequality now applied to $\partial_z w^2=2w\partial_z w$ and the embedding $H_{xy}^{-1}\hookrightarrow L^{6/5}_{xy}$ one obtains
	\begin{align*}
	\norm{\partial_z w^2}_{L^2(0,T;H^1_zH^{-1}_{xy})} &\leq C 	\norm{\partial_z^2 w^2}_{L^2(0,T;L^2_zL^{6/5}_{xy})}  \\
	&\leq C 	\norm{(\partial_z w)^2}_{L^2(0,T;L^2_zL^{6/5}_{xy})} 
	+ C 	\norm{w\partial_z^2 w}_{L^2(0,T;L^2_zL^{6/5}_{xy})} \\
	&\leq C 	\norm{\partial_z w}_{L^\infty(0,T;L^2_zL^{2}_{xy})} 	\norm{\partial_z w}_{L^2(0,T;L^\infty_zL^{3}_{xy})} 
	+ C 	\norm{w}_{L^\infty(0,T;L^\infty_zL^{3}_{xy})} \norm{\partial_z^2 w}_{L^2(0,T;L^2_zL^{2}_{xy})}  \\
	&\leq C 	\norm{v}_{L^\infty(0,T;H^1(\Omega))} 	\norm{u}_{L^2(0,T;H^2(\Omega))} 
	+ C 	\norm{u}_{L^\infty(0,T;H^1(\Omega))} \norm{v}_{L^2(0,T;H^2( \Omega))},
	\end{align*}
	where one uses Hölder's inequality, the emdeddings $H^1(\Omega)\hookrightarrow  H^{2/3}_zH^{1/3}_{xy}\hookrightarrow L^\infty_zL^{3}_{xy}$, and $\partial_z w=-\divergence_H v$. Combining the above inequalities  the claim follows.
\end{proof}
\subsection{Estimates for right hand side of the difference equations~\eqref{eq_Diff}}
For given
$U_{\varepsilon,\delta}=(V_{\varepsilon,\delta}, W_{\varepsilon,\delta})$ and $u=(v,w)$
consider the term $f_{\varepsilon,\delta}$. This term appeared in \eqref{eq:fvarepsilon} when linearizing the difference equation~\eqref{eq_Diff} as a diffusion-transport type equation in Subsection~\ref{subsec:vertical_reg}.
Recall that
\begin{align*}
f_{\varepsilon,\delta}= \divergence_H \left( (v,\varepsilon w)\otimes V_{\varepsilon,\delta} + U_{\varepsilon,\delta} \otimes (v+ V_{\varepsilon,\delta} ) \right) +
(\delta \partial_z^2 v, \varepsilon (\partial_t w - \Delta_{\delta} w + u  \cdot  \nabla w)). 
\end{align*}
\begin{proposition}\label{prop:estimate_f}
	Let $T>0$, $\varepsilon\in (0,1]$, $\delta>0$, then there exists a constant $C>0$ independent of $U_{\varepsilon,\delta}$, $u$, $\delta$ and $\varepsilon$ such that for $f_{\varepsilon,\delta}$ given in \eqref{eq:fvarepsilon}
	\begin{align*}
	\norm{f_{\varepsilon,\delta}}_{L^2(0,T;V')} \leq C \left(\norm {U_{\varepsilon,\delta}}_{\IE_z(T)}^2  
	+ \norm {u}_{\IE_1(T)}  \norm {U_{\varepsilon,\delta}}_{\IE_z(T)} 
	+ \delta \norm {v}_{L^2(0,T;H^3_zL^2_{xy})} + \varepsilon(\norm{u}_{\IE_1(T)}+\norm{u}^2_{\IE_1(T)})\right).  
	\end{align*}
\end{proposition}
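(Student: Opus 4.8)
The plan is to bound each of the four building blocks of $f_{\varepsilon,\delta}$ separately in the norm of $L^2(0,T;V')=L^2(0,T;H^1_zH^{-1}_{xy})$: the two horizontal-divergence terms $\divergence_H((v,\varepsilon w)\otimes V_{\varepsilon,\delta})$ and $\divergence_H(U_{\varepsilon,\delta}\otimes(v+V_{\varepsilon,\delta}))$, and the two additive forcing terms $\delta\partial_z^2 v$ and $\varepsilon(\partial_t w-\Delta_\delta w+u\cdot\nabla w)$. Since every factor appearing here is either a component of $u$ or of $U_{\varepsilon,\delta}$, the whole estimate can be assembled from the bilinear Lemma~\ref{lemma:divhv1v2}, the embedding Lemma~\ref{lemma:emdebbingIE1L4}, Lemma~\ref{lemma:unablaw}, and the elementary inclusion $L^2_{xy}\hookrightarrow H^{-1}_{xy}$, which gives $\|g\|_{H^1_zH^{-1}_{xy}}\le C\|g\|_{H^1_zL^2_{xy}}$.

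For the divergence terms I would work componentwise: the $i$-th component of $\divergence_H((v,\varepsilon w)\otimes V_{\varepsilon,\delta})$ is $\divergence_H((v,\varepsilon w)_i V_{\varepsilon,\delta})$, with scalar first factor and $2$-vector second factor, so Lemma~\ref{lemma:divhv1v2} applies directly and summing over $i$ yields a bound by $\|(v,\varepsilon w)\|_{L^4(0,T;H^1_zL^4_{xy})}\|V_{\varepsilon,\delta}\|_{L^4(0,T;H^1_zL^4_{xy})}$. Using $\varepsilon\le 1$ together with Lemma~\ref{lemma:emdebbingIE1L4} (in its $\IE_1$-form for $u$ and its $\IE_z$-form for $U_{\varepsilon,\delta}$) this is controlled by $C\|u\|_{\IE_1(T)}\|U_{\varepsilon,\delta}\|_{\IE_z(T)}$. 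The second divergence term is handled identically, splitting $v+V_{\varepsilon,\delta}$ by the triangle inequality, which produces the cross term $\|u\|_{\IE_1(T)}\|U_{\varepsilon,\delta}\|_{\IE_z(T)}$ and the quadratic term $\|U_{\varepsilon,\delta}\|_{\IE_z(T)}^2$.

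For the purely horizontal forcing $\delta\partial_z^2 v$ I would simply apply $L^2_{xy}\hookrightarrow H^{-1}_{xy}$ to obtain $\|\delta\partial_z^2 v\|_{L^2(0,T;V')}\le C\delta\|v\|_{L^2(0,T;H^3_zL^2_{xy})}$. The vertical forcing is then treated term by term after invoking the representation $w=w(v)$ from \eqref{eq:Weps}, so that $\partial_z w=-\divergence_H v$ and $\partial_z^2 w=-\divergence_H\partial_z v$. The nonlinear piece $u\cdot\nabla w$ is bounded by Lemma~\ref{lemma:unablaw} together with $\IE_1(T)\hookrightarrow L^\infty(0,T;H^1)$, giving $C\varepsilon\|u\|_{\IE_1(T)}^2$; the terms $\partial_t w$ and $\Delta_H w$ each yield $C\varepsilon\|u\|_{\IE_1(T)}$, where for $\partial_t w$ one writes $\partial_z\partial_t w=-\divergence_H\partial_t v$ and uses the Poincar\'e inequality (valid by the odd parity, $w(\cdot,\cdot,0)=0$) together with $L^2_{xy}\hookrightarrow H^{-1}_{xy}$.

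The main obstacle is the remaining contribution $\varepsilon\delta\partial_z^2 w$ arising from $\Delta_\delta=\Delta_H+\delta\partial_z^2$, because the asserted constant must be independent of $\delta$. Here I would exploit $\partial_z^2 w=-\divergence_H\partial_z v$ and $\partial_z^3 w=-\divergence_H\partial_z^2 v$ to gain one horizontal derivative, so that $\|\partial_z^2 w\|_{H^1_zH^{-1}_{xy}}\le C\|v\|_{H^2_zL^2_{xy}}$, and then absorb the prefactor $\varepsilon\delta$ into $\delta$ via $\varepsilon\le 1$, bounding this contribution by $C\delta\|v\|_{L^2(0,T;H^3_zL^2_{xy})}$ and merging it with the $\delta\partial_z^2 v$ estimate. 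Collecting the four estimates and relabelling the constant then yields the claim.
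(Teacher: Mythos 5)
Your proposal is correct and follows essentially the same route as the paper: Lemma~\ref{lemma:divhv1v2} together with Lemma~\ref{lemma:emdebbingIE1L4} for the two $\divergence_H$-terms, the embedding $L^2_{xy}\hookrightarrow H^{-1}_{xy}$ for $\delta\partial_z^2 v$, Lemma~\ref{lemma:unablaw} for $u\cdot\nabla w$, and the representation $\partial_z w=-\divergence_H v$ with Poincar\'e's inequality for the linear $w$-terms. The one place you are in fact more careful than the paper is the contribution $\varepsilon\delta\partial_z^2 w$: the paper simply absorbs it into $\norm{\partial_t w-\Delta_\delta w}_{L^2(0,T;V')}\le C\norm{v}_{\IE_1(T)}$, which hides a $\delta$-dependence for large $\delta$, whereas your device of gaining a horizontal derivative via $\partial_z^2 w=-\partial_z\divergence_H v$ and folding the prefactor into the $\delta\norm{v}_{L^2(0,T;H^3_zL^2_{xy})}$ term genuinely delivers the claimed $\delta$-independence of the constant.
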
	
\begin{proof}
	The  $\divergence_{H}$-terms can be estimated using Lemma~\ref{lemma:divhv1v2} and Lemma~\ref{lemma:emdebbingIE1L4} to obtain
	  \begin{align*}
	  \norm{\divergence_H ( (v,\varepsilon w)\otimes V_{\varepsilon,\delta} + U_{\varepsilon,\delta} \otimes (v+ V_{\varepsilon,\delta} ))}_{L^2(0,T;V')} \leq C( \norm {U_{\varepsilon,\delta}}_{\IE_z(T)}^2  
	  + \norm {u}_{\IE_1(T)}  \norm {U_{\varepsilon,\delta}}_{\IE_z(T)}).
	  \end{align*}
	For the remaining terms recall that it follows from the representation \eqref{eq:Weps} and the Poincar\'e inequality for $w(v)$ that
	\begin{align*}
	\norm{\partial_tw(v)}_{V'}
	&\leq C \norm{\partial_zw(\partial_t v)}_{L^2_zH^{-1}_{xy}}
	\leq C \norm{\partial_tv}_{L^2(\Omega)},\\
	\norm{\partial_z^2w(v)}_{V'}&\leq C \norm{\partial_z^2\divergence_{H} v}_{L^2_zH^{-1}_{xy}}\leq C
	\norm{\partial_z^2 v}_{L^2(\Omega)}, \quad \hbox{and}\\ 
	\norm{\Delta_H w(v)}_{V'}&
	\leq C \norm{\partial_z w(\Delta_Hv)}_{L^2_zH^{-1}_{xy}}
	\leq
	C\norm{\Delta_H v}_{L^2(\Omega)}. 
	\end{align*}
		Thereby, we estimate 
	\begin{align*}
	\norm{\partial_t w- \Delta_\delta w}_{L^2(0,T;V')} \leq C\norm{v}_{\IE_1(T)}.
	\end{align*}
 The non-linear term is estimated by Lemma~\ref{lemma:unablaw}, that is, there is some $C>0$ such that
	\begin{align*}
	\norm{u\cdot \nabla w}_{L^2(0,T;H^1_zH^{-1}_{xy})} \leq 
	C	\norm{u}_{L^\infty(0,T;H^1(\Omega))}
	\norm{u}_{L^2(0,T;H^2(\Omega))}\leq C \norm{u}^2_{\IE_1(T)},
	\end{align*}
	where in the last step we used the trace embedding $\IE_1(T)\hookrightarrow L^\infty(0,T;H^1(\Omega))$. Eventually, since $L^2_{xy}\hookrightarrow  H^{-1}_{xy}$ one has 
$\norm {v}_{L^2(0,T;V')} \leq C \norm {v}_{L^2(0,T;H^3_zL^2_{xy})}$ for some $C>0$.
\end{proof}


	Eventually, combining the previous estimates, we obtain estimates on $F_H$ and $F_z$ appearing as right hand sides \eqref{eq:F} in the difference equation \eqref{eq_Diff}. 
	Recall that
	\begin{align*}
	F_H(V_{\varepsilon,\delta}, W_{\varepsilon,\delta})=&- ( V_{\varepsilon,\delta}, \tfrac{1}{\varepsilon}W_{\varepsilon,\delta}) \cdot \nabla v 
	-u \cdot \nabla V_{\varepsilon,\delta}
	-( V_{\varepsilon,\delta}, \tfrac{1}{\varepsilon}W_{\varepsilon,\delta}) \cdot \nabla V_{\varepsilon,\delta}
	+ \delta\partial_z^2 v,\\
	F_z(V_{\varepsilon,\delta}, W_{\varepsilon,\delta})=&- ( V_{\varepsilon,\delta}, \tfrac{1}{\varepsilon}W_{\varepsilon,\delta}) \cdot \nabla \varepsilon w 
	-u \cdot \nabla W_{\varepsilon,\delta}
	-(V_{\varepsilon,\delta}, \tfrac{1}{\varepsilon} W_{\varepsilon,\delta}) \cdot \nabla W_{\varepsilon,\delta}
	- \varepsilon(\partial_t w + u \cdot \nabla w - \Delta_\delta w).
	\end{align*}
	
\begin{proposition}\label{prop:estimate_F}	Let $T>0$, $\varepsilon\in (0,1]$, $\delta>0$ and
	\begin{align*}
	U_{\varepsilon,\delta}=(V_{\varepsilon,\delta},W_{\varepsilon,\delta})\in\IE_{H,z}(T):= \IE_z(T)\cap\IE_H(T), \quad \hbox{and}\quad u=(v,w(v))\in \IE_1(T). 
	\end{align*}
	 Then there is a $C>0$ independent of $\varepsilon$, $\delta$, $u$, and $U_{\varepsilon,\delta}$ such that
	\begin{multline*}
	\norm{(F_H, F_z)}_{\IE_0(T)} \leq C \big( \norm{(V_{\varepsilon,\delta},  W_{\varepsilon,\delta})}_{\IE_{H,z}(T)}^2 
	+  \norm{(V_{\varepsilon,\delta},  W_{\varepsilon,\delta})}_{\IE_{H,z}(T)}\norm{(v,w(v))}_{\IE_1(T)} \\ 
	+\varepsilon  (\norm{w}_{\IE_1(T)}+\norm{w}_{\IE_1(T)}^2)\big)	+\delta \norm{v}_{\IE_1(T)}.
	\end{multline*}
\end{proposition}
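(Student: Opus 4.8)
The plan is to read off from the representation \eqref{eq:F} (equivalently \eqref{eq:Fdivform}) that $(F_H, F_z)$ splits into three types of convective bilinear terms — those pairing the background solution $u=(v,w(v))$ with the difference $U_{\varepsilon,\delta}=(V_{\varepsilon,\delta},W_{\varepsilon,\delta})$, those quadratic in $U_{\varepsilon,\delta}$, and those entirely in $u$ hidden in the forcing — together with the two linear forcing terms $\delta\partial_z^2 v$ and $-\varepsilon(\partial_t w - u\cdot\nabla w + \Delta_\delta w)$ that were added to both sides. The device that brings every convective term under Lemma~\ref{lemma:vnablaw} is the reconstruction \eqref{eq:Weps}: writing $(V_{\varepsilon,\delta},\tfrac1\varepsilon W_{\varepsilon,\delta})\cdot\nabla g = V_{\varepsilon,\delta}\cdot\nabla_H g + w(V_{\varepsilon,\delta})\,\partial_z g$ and $u\cdot\nabla g = v\cdot\nabla_H g + w(v)\,\partial_z g$, the dangerous $\tfrac1\varepsilon$-factor is absorbed into a reconstructed vertical velocity, so each term is exactly of the shape $v_1\cdot\nabla_H v_2$ or $w(v_1)\,\partial_z v_2$ estimated in Lemma~\ref{lemma:vnablaw}.

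First I would dispatch the bilinear terms. Using Lemma~\ref{lemma:vnablaw} together with the embedding $\IE_1(T)\hookrightarrow\IE_H(T)$ (so that the $\IE_H$-slot may be filled by either $v$ or $V_{\varepsilon,\delta}$), the terms $u\cdot\nabla V_{\varepsilon,\delta}$, $u\cdot\nabla W_{\varepsilon,\delta}$ and $(V_{\varepsilon,\delta},\tfrac1\varepsilon W_{\varepsilon,\delta})\cdot\nabla v$ are each bounded by $C\,\norm{(v,w(v))}_{\IE_1(T)}\norm{(V_{\varepsilon,\delta},W_{\varepsilon,\delta})}_{\IE_{H,z}(T)}$, which is the cross term; the companion term $(V_{\varepsilon,\delta},\tfrac1\varepsilon W_{\varepsilon,\delta})\cdot\nabla\varepsilon w$ carries an extra factor $\varepsilon\le1$ and folds into the same cross term. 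The genuinely quadratic terms $(V_{\varepsilon,\delta},\tfrac1\varepsilon W_{\varepsilon,\delta})\cdot\nabla V_{\varepsilon,\delta}$ and $(V_{\varepsilon,\delta},\tfrac1\varepsilon W_{\varepsilon,\delta})\cdot\nabla W_{\varepsilon,\delta}$ are bounded by $C\,\norm{(V_{\varepsilon,\delta},W_{\varepsilon,\delta})}_{\IE_{H,z}(T)}^2$.

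It then remains to control the two forcing terms in $\IE_0(T)=L^2(0,T;L^2(\Omega))$. The term $\delta\partial_z^2 v$ is linear and contributes $\delta\norm{\partial_z^2 v}_{\IE_0(T)}\le\delta\norm{v}_{\IE_1(T)}$. For the $F_z$-forcing I would split $\Delta_\delta=\Delta_H+\delta\partial_z^2$: the pieces $\varepsilon\partial_t w$ and $\varepsilon\Delta_H w$ are bounded by $\varepsilon\norm{w}_{\IE(T)}$; the nonlinear piece $\varepsilon\,u\cdot\nabla w$ is handled by Lemma~\ref{lemma:NavierStokes_nonlinear}, giving $C\varepsilon\norm{u}_{\IE_1(T)}\norm{w}_{\IE(T)}$, whence the quadratic contribution $\varepsilon\norm{w}_{\IE(T)}^2$; and the remaining piece $\varepsilon\delta\partial_z^2 w=-\varepsilon\delta\,\divergence_H\partial_z v$ is, using $\varepsilon\le1$ and $\norm{\partial_z\divergence_H v}_{\IE_0(T)}\le\norm{v}_{\IE_1(T)}$, absorbed into the $\delta\norm{v}_{\IE_1(T)}$ contribution. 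Collecting all pieces yields the asserted inequality.

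The only real subtlety — and the step I would treat most carefully — is uniformity in $\varepsilon$: a priori the vertical transport terms carry a diverging $\tfrac1\varepsilon$, and it is precisely the divergence-free identity \eqref{eq:Weps} that trades it for the bounded reconstruction map $v_1\mapsto w(v_1)$, whose $L^2$-mapping properties are quantified in Lemma~\ref{lemma:vnablaw}; once each convective term is rewritten this way the constants are $\varepsilon$-independent and the residual explicit powers of $\varepsilon$ (all with exponent $\ge1$) only help. The parallel bookkeeping for the added vertical-diffusion remainder $\varepsilon\delta\partial_z^2 w$ — verifying that it produces no uncontrolled $\varepsilon\delta$-term but instead folds cleanly into $\delta\norm{v}_{\IE_1(T)}$ — is the other point I would check in detail.
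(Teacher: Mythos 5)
Your proposal is correct and follows essentially the same route as the paper: the convective terms are reduced, via the reconstruction \eqref{eq:Weps}, to the two bilinear shapes estimated in Lemma~\ref{lemma:vnablaw} (with $\IE_1(T)\hookrightarrow\IE_H(T)$ filling the first slot by either $v$ or $V_{\varepsilon,\delta}$), the term $\varepsilon\, u\cdot\nabla w$ is handled by Lemma~\ref{lemma:NavierStokes_nonlinear}, and the added forcings contribute $\delta\norm{v}_{\IE_1(T)}$ and $\varepsilon(\norm{w}_{\IE_1(T)}+\norm{w}_{\IE_1(T)}^2)$. Your explicit treatment of the piece $\varepsilon\delta\partial_z^2 w=-\varepsilon\delta\,\divergence_H\partial_z v$, folding it into $\delta\norm{v}_{\IE_1(T)}$, is a point the paper passes over silently, and it is handled correctly.
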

\begin{proof}
By Lemma~\ref{lemma:vnablaw}
\begin{multline*}
	\norm{F_H}_{\IE_0(T)} \leq C\left( \norm{V_{\varepsilon,\delta}}_{\IE_H(T)}\norm{v}_{\IE_1(T)}
	+
\norm{v}_{\IE_1(T)}\norm{V_{\varepsilon,\delta}}_{\IE_z(T)}
+
	 \norm{V_{\varepsilon,\delta}}_{\IE_H(T)} \norm{V_{\varepsilon,\delta}}_{\IE_z(T)}
	 \right)
	 \\
	+	\delta\norm{v}_{\IE_1(T)}.
\end{multline*}
Similarly, by Lemma~\ref{lemma:vnablaw} and Lemma~\ref{lemma:NavierStokes_nonlinear}
\begin{multline*} 
\norm{F_z}_{\IE_0(T)} \leq C\{ \norm{V_{\varepsilon,\delta}}_{\IE_H(T)}\norm{w}_{\IE_1(T)}
+
\norm{u}_{\IE_1(T)}\norm{W_{\varepsilon,\delta}}_{\IE_z(T)}
+\norm{W_{\varepsilon,\delta}}_{\IE_H(T)}\norm{V_{\varepsilon,\delta}}_{\IE_z(T)}
\\
+
\varepsilon(\norm{w}_{\IE_1(T)}+\norm{w}_{\IE_1(T)}+\norm{u}_{\IE_1(T)}^2) 
\}
\end{multline*}
and combining both estimates gives the assertion. 
\end{proof}

\subsection{Estimates for the right hand side of the difference equations~\eqref{eq:F_Diff2}}
Recall that the right hand side  of \eqref{eq:NS_eps4} is given by \eqref{eq:F_Diff2} using  the  notations \eqref{eq:VW_2} and \eqref{eq:U_2}, that is,    
\begin{align*}
\overline{F}_{\varepsilon,\delta}^H=&
-\overline{V}_{\varepsilon,\delta}\cdot\nabla_H \overline{V}_{\varepsilon,\delta}
-\overline{v}_{0,\infty}\cdot\nabla_H \overline{V}_{\varepsilon,\delta}
-\overline{V}_{\varepsilon,\delta}\cdot\nabla_H \overline{v}_{0,\infty}
\\ 
&-\tfrac{1}{2}\int_{-1}^{+1}(\tilde{V}_{\varepsilon,\delta}+\tilde{v}_{0,\infty}^{\varepsilon,\delta})\cdot\nabla_H (\tilde{V}_{\varepsilon,\delta}+\tilde{v}^{\varepsilon,\delta}_{0,\infty}) +
(\tfrac{1}{\varepsilon}W_{\varepsilon,\delta}+w^{\varepsilon,\delta}_{0,\infty})\cdot\partial_z (\tilde{V}_{\varepsilon,\delta}+\tilde{v}^{\varepsilon,\delta}_{0,\infty})
, \\
\tilde{F}_{\varepsilon,\delta}^H=&-(\tilde{V}_{\varepsilon,\delta}+\tilde{v}^{\varepsilon,\delta}_{0,\infty})\cdot\nabla_H (\overline{V}_{\varepsilon,\delta}+\overline{v}_{0,\infty})
-(\overline{V}_{\varepsilon,\delta}+\overline{v}_{0,\infty})\cdot\nabla_H (\tilde{V}_{\varepsilon,\delta}+\tilde{v}^{\varepsilon,\delta}_{0,\infty})
\\
&-(\tilde{V}_{\varepsilon,\delta}+\tilde{v}^{\varepsilon,\delta}_{0,\infty})\cdot\nabla_H (\tilde{V}_{\varepsilon,\delta}+\tilde{v}^{\varepsilon,\delta}_{0,\infty})
-(\tfrac{1}{\varepsilon}W_{\varepsilon,\delta}+w^{\varepsilon,\delta}_{0,\infty})\cdot\partial_z (\tilde{V}_{\varepsilon,\delta}+\tilde{v}^{\varepsilon,\delta}_{0,\infty})
\\
&+ \tfrac{1}{2}\int_{-1}^{+1}(\tilde{V}_{\varepsilon,\delta}+\tilde{v}^{\varepsilon,\delta}_{0,\infty})\cdot\nabla_H (\tilde{V}_{\varepsilon,\delta}+\tilde{v}^{\varepsilon,\delta}_{0,\infty})
+(\tfrac{1}{\varepsilon}W_{\varepsilon,\delta}+w^{\varepsilon,\delta}_{0,\infty})\cdot\partial_z (\tilde{V}_{\varepsilon,\delta}+\tilde{v}^{\varepsilon,\delta}_{0,\infty}),
\\
\tilde{F}_{\varepsilon,\delta}^w=&-(\overline{V}_{\varepsilon,\delta}+\overline{v}_{0,\infty})\cdot \nabla_H(  W_{\varepsilon,\delta}+\varepsilon w^{\varepsilon,\delta}_{0,\infty})
\\
&-(\tilde{V}_{\varepsilon,\delta}+\tilde{v}^{\varepsilon,\delta}_{0,\infty})\cdot \nabla_H (W_{\varepsilon,\delta}+\varepsilon w^{\varepsilon,\delta}_{0,\infty}))
-(\tfrac{1}{\varepsilon}W_{\varepsilon,\delta}+ w^{\varepsilon,\delta}_{0,\infty})\cdot \partial_z (W_{\varepsilon,\delta}+\varepsilon w_{0,\infty}^{\varepsilon,\delta}).
\end{align*}

\begin{lemma}\label{lemma:vnablaw2}
	Let $T>0$, $\varepsilon\in (0,1]$, $q=2$ and $p=2$, then there exists a constant $C>0$ such that for all $v_1,v_2\in L^4(0,T;H^{3/2}(\Omega)\cap L^2_{\sigma,\varepsilon}(\Omega))$
	\begin{align*}
		\norm{v_1 \cdot \nabla_H v_2}_{\IE_0(T)} &\leq C \norm{v_1}_{L^4(0,T;H^{3/2})} \norm{v_2}_{L^4(0,T;H^{3/2})}, \\
		\norm{\tfrac{1}{\varepsilon}w(v_1) \cdot \partial_z v_2}_{\IE_0(T)} &\leq C \norm{v_1}_{L^4(0,T;H^{3/2})} \norm{v_2}_{_{L^4(0,T:H^{3/2})}},
	\end{align*}
	and there exists a constant $C>0$ such that 
	\begin{align*}
	\norm{v}_{L^4(0,T;H^{3/2})} \leq C \norm{v}_{\IE_1(T)} \quad \hbox{for all } v\in \IE_1(T).
	\end{align*}	
\end{lemma}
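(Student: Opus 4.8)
The plan is to reduce both bilinear bounds to a single \emph{pointwise-in-time} spatial estimate, then integrate in time by Hölder's inequality, while the final embedding is read off from the mixed derivative theorem exactly as in the proof of Lemma~\ref{lemma:emdebbingIE1L4}. For the spatial estimate I would split both products by the anisotropic Hölder inequality \eqref{eq:Holder} with exponents $\infty,2$ in $z$ and $4,4$ in $xy$, writing
\[
\norm{v_1\cdot\nabla_H v_2}_{L^2(\Omega)} \leq \norm{v_1}_{L^\infty_zL^4_{xy}}\,\norm{\nabla_H v_2}_{L^2_zL^4_{xy}}, \qquad
\norm{\tfrac{1}{\varepsilon} w(v_1)\cdot\partial_z v_2}_{L^2(\Omega)} \leq \norm{\tfrac{1}{\varepsilon} w(v_1)}_{L^\infty_zL^4_{xy}}\,\norm{\partial_z v_2}_{L^2_zL^4_{xy}}.
\]
Each factor is then controlled by $\norm{\cdot}_{H^{3/2}(\Omega)}$ through the anisotropic embeddings $H^{3/2}(\Omega)\hookrightarrow H^1_zH^{1/2}_{xy}\hookrightarrow L^\infty_zL^4_{xy}$ (using the one-dimensional $H^1_z\hookrightarrow L^\infty_z$ and the two-dimensional $H^{1/2}_{xy}\hookrightarrow L^4_{xy}$) and $H^{1/2}(\Omega)\hookrightarrow L^2_zH^{1/2}_{xy}\hookrightarrow L^2_zL^4_{xy}$, the latter applied to $\nabla_H v_2,\partial_z v_2\in H^{1/2}(\Omega)$. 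On the torus these embeddings follow from the Fourier-multiplier comparisons $(1+k_z^2)(1+|k_{xy}|^2)^{1/2}\leq (1+|k|^2)^{3/2}$ and $(1+|k_{xy}|^2)^{1/2}\leq(1+|k|^2)^{1/2}$.

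The one genuinely delicate factor is $\norm{\tfrac{1}{\varepsilon} w(v_1)}_{L^\infty_zL^4_{xy}}$, and this is where I expect the main obstacle. Since $H^{3/2}(\Omega)$ is the critical Sobolev space in three dimensions, one cannot route this term through an isotropic $L^\infty$ or $H^1$ bound; indeed $\tfrac{1}{\varepsilon} w(v_1)$ gains no horizontal regularity over $v_1$, so it need not lie in $H^1(\Omega)$. The remedy is to use the explicit reconstruction \eqref{eq:Weps}, namely $\tfrac{1}{\varepsilon} w(v_1)=-\int_{-1}^z\divergence_H v_1\,d\xi$, which is available because $v_1\in L^2_{\sigma,\varepsilon}(\Omega)$ carries the divergence-free and parity conditions. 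Taking the supremum in $z$ and applying Hölder on the bounded fibre $(-1,1)$ gives $\norm{\tfrac{1}{\varepsilon} w(v_1)}_{L^\infty_zL^4_{xy}}\leq\norm{\divergence_H v_1}_{L^1_zL^4_{xy}}\leq C\norm{\divergence_H v_1}_{L^2_zL^4_{xy}}\leq C\norm{v_1}_{H^{3/2}}$, the crucial point being that the prefactor $\tfrac{1}{\varepsilon}$ is absorbed by the representation so that the resulting constant is independent of $\varepsilon$.

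Combining these spatial bounds yields $\norm{v_1\cdot\nabla_H v_2}_{L^2(\Omega)}+\norm{\tfrac{1}{\varepsilon} w(v_1)\cdot\partial_z v_2}_{L^2(\Omega)}\leq C\norm{v_1}_{H^{3/2}}\norm{v_2}_{H^{3/2}}$ for a.e.\ $t$. Squaring, integrating over $(0,T)$ and applying the Cauchy--Schwarz inequality in time,
\[
\int_0^T \norm{v_1}_{H^{3/2}}^2\norm{v_2}_{H^{3/2}}^2\,dt \leq \norm{v_1}_{L^4(0,T;H^{3/2})}^2\,\norm{v_2}_{L^4(0,T;H^{3/2})}^2,
\]
gives the two asserted bilinear estimates. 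Finally, for the embedding $\norm{v}_{L^4(0,T;H^{3/2})}\leq C\norm{v}_{\IE_1(T)}$ I would reuse the chain already established in the proof of Lemma~\ref{lemma:emdebbingIE1L4}: the mixed derivative theorem gives $\IE_1(T)\hookrightarrow H^{1/4}(0,T;H^{3/2}(\Omega))$, and the one-dimensional Sobolev embedding $H^{1/4}(0,T)\hookrightarrow L^4(0,T)$ then closes the argument.
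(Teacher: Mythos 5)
Your proposal is correct and follows essentially the same route as the paper: anisotropic H\"older with the $L^\infty_zL^4_{xy}\times L^2_zL^4_{xy}$ splitting, the representation \eqref{eq:Weps} plus Poincar\'e to absorb the $\tfrac{1}{\varepsilon}$ in the $w(v_1)$ term, the embeddings $H^{3/2}(\Omega)\hookrightarrow H^1_zH^{1/2}_{xy}\hookrightarrow L^\infty_zL^4_{xy}$ and (for the gradient factors) into $L^2_zL^4_{xy}$, and finally the mixed derivative theorem giving $\IE_1(T)\hookrightarrow H^{1/4}(0,T;H^{3/2}(\Omega))\hookrightarrow L^4(0,T;H^{3/2}(\Omega))$. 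The only cosmetic differences are that you estimate pointwise in time before applying Cauchy--Schwarz, and you route $\nabla_Hv_2,\partial_zv_2$ through $H^{1/2}(\Omega)\hookrightarrow L^2_zL^4_{xy}$ rather than the paper's $H^{3/2}(\Omega)\hookrightarrow L^2_zH^{1,4}_{xy}$, which is an equivalent bookkeeping choice.
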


\begin{proof}
	As in the proof of Lemma~\ref{lemma:vnablaw}
	\begin{align*}
		\norm{v_1 \cdot \nabla_H v_2}_{\IE_0(T)} & \leq \norm{v_1}_{L^4(0,T;L^\infty_zL^4_{xy})}
		\norm{v_2}_{L^4(0,T;L^2_zL^4_{xy})}, \quad \hbox{and}\\
		\norm{\tfrac{1}{\varepsilon}w(v_1) \cdot \partial_z v_2}_{\IE_0(T)} 
		& \leq \norm{\nabla_H v_1}_{L^4(0,T;L^2_zL^4_{xy})}
		\norm{\partial_z v_2}_{L^4(0,T;L^2_zL^4_{xy})}.
	\end{align*}
	Then one uses the Sobolev embeddings and the mixed derivative theorem to obtain  
	\begin{align*}
		H^{3/2}(\Omega) \hookrightarrow
		L^2_zH^{3/2}_{xy} \hookrightarrow
		L^2_zH^{1,4}_{xy}   \quad \hbox{and}\quad
		H^{3/2}(\Omega)\hookrightarrow H^1_zH^{1/2}_{xy} \hookrightarrow 
			H^1_zL^{4}_{xy}  
 	\end{align*}
	Also, by the mixed derivative theorem and Sobolev embeddings
	\begin{align*}
		\IE_1(T) \hookrightarrow H^{1/4}(0,T;H^{3/2}(\Omega)) \hookrightarrow L^{4}(0,T;H^{3/2}(\Omega)). \qquad \qedhere
	\end{align*}
\end{proof}

\begin{proposition}\label{prop:F_diff2}
	Let $T>0$ and $\varepsilon\in (0,1]$, $\delta>0$, then there exists a constant $C>0$ independent of $\varepsilon,\delta$ such that
	\begin{align*}
		\norm{\overline{F}_{\varepsilon,\delta}^H}_{\IE_0(T)}	& \leq C \big(\norm{U_{\varepsilon,\delta}}_{\IE_1(T)}^2
		+\norm{U_{\varepsilon,\delta}}_{\IE_1(T)}
		\norm{u^{\varepsilon,\delta}_{0,\infty}}_{L^4(0,T;H^{3/2})}
		+\norm{\tilde{u}^{\varepsilon,\delta}_{0,\infty}}_{L^4(0,T;H^{3/2})}^2\big), \\
		 	\norm{	\tilde{F}_{\varepsilon,\delta}^H}_{\IE_0(T)}	& \leq
 C \big(\norm{U_{\varepsilon,\delta}}_{\IE_1(T)}^2
+\norm{U_{\varepsilon,\delta}}_{\IE_1(T)}
\norm{u^{\varepsilon,\delta}_{0,\infty}}_{L^4(0,T;H^{3/2})}
+\norm{u^{\varepsilon,\delta}_{0,\infty}}_{L^4(0,T;H^{3/2})}
\norm{\tilde{u}^{\varepsilon,\delta}_{0,\infty}}_{L^4(0,T;H^{3/2})}
\big), \\
		 		\norm{\tilde{F}_{\varepsilon,\delta}^w}_{\IE_0(T)}	& \leq
		 		  C \big(\norm{U_{\varepsilon,\delta}}_{\IE_1(T)}^2
		 		+\norm{U_{\varepsilon,\delta}}_{\IE_1(T)}
		 		\norm{u^{\varepsilon,\delta}_{0,\infty}}_{L^4(0,T;H^{3/2})}
		 		+\norm{\tilde{u}^{\varepsilon,\delta}_{0,\infty}}_{L^4(0,T;H^{3/2})}^2
		 		\big).
	\end{align*}
\end{proposition}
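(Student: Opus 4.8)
The plan is to treat each of the three right-hand sides term by term, exploiting the fact that after expanding the products every summand is a bilinear expression of exactly the two shapes controlled by Lemma~\ref{lemma:vnablaw2}, namely $a\cdot\nabla_H b$ and $\tfrac{1}{\varepsilon}w(a)\cdot\partial_z b$. First I would substitute the relations \eqref{eq:VW_2} and \eqref{eq:U_2} and split each bracket $(\tilde V_{\varepsilon,\delta}+\tilde v^{\varepsilon,\delta}_{0,\infty})$, $(\overline V_{\varepsilon,\delta}+\overline v_{0,\infty})$, $(\tfrac{1}{\varepsilon}W_{\varepsilon,\delta}+w^{\varepsilon,\delta}_{0,\infty})$ into its difference part and its limit part. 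This produces, for each $F$, a finite sum of bilinear terms that I would sort into three classes according to the nature of their two factors: difference times difference, difference times limit, and limit times limit.

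For the terms carrying the singular prefactor $\tfrac{1}{\varepsilon}$ I would invoke \eqref{eq:Weps}, which expresses $\tfrac{1}{\varepsilon}W_{\varepsilon,\delta}=-\int_{-1}^z\divergence_H V_{\varepsilon,\delta}$, so that the prefactor is absorbed into a horizontal derivative and the second estimate of Lemma~\ref{lemma:vnablaw2} applies with $\nabla_H V_{\varepsilon,\delta}$ in place of $\tfrac{1}{\varepsilon}W_{\varepsilon,\delta}$; the limit counterpart $w^{\varepsilon,\delta}_{0,\infty}$ is handled identically through the Stokes divergence-free condition, its scaled form $\varepsilon w^{\varepsilon,\delta}_{0,\infty}$ entering the norm $\|\tilde u^{\varepsilon,\delta}_{0,\infty}\|_{L^4(0,T;H^{3/2})}$ consistently with Proposition~\ref{prop:2DNS}. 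Applying Lemma~\ref{lemma:vnablaw2} to each summand bounds it by the product of the $L^4(0,T;H^{3/2})$-norms of its two factors, and the embedding $\IE_1(T)\hookrightarrow L^4(0,T;H^{3/2})$ from the same lemma then turns every difference factor into $\|U_{\varepsilon,\delta}\|_{\IE_1(T)}$ and every limit factor into $\|u^{\varepsilon,\delta}_{0,\infty}\|_{L^4(0,T;H^{3/2})}$ or $\|\tilde u^{\varepsilon,\delta}_{0,\infty}\|_{L^4(0,T;H^{3/2})}$. Collecting the three classes yields exactly the stated structure of a quadratic term, a cross term, and a quadratic term, where the purely baroclinic limit-times-limit contributions give $\|\tilde u^{\varepsilon,\delta}_{0,\infty}\|^2$ in $\overline F^H$ and $\tilde F^w$, while the mixed barotropic-baroclinic ones give $\|u^{\varepsilon,\delta}_{0,\infty}\|\,\|\tilde u^{\varepsilon,\delta}_{0,\infty}\|$ in $\tilde F^H$.

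One auxiliary point concerns $\overline F^H$, which lives on the two-dimensional domain $G$ and contains the vertical averages $\tfrac{1}{2}\int_{-1}^{+1}(\cdots)$. I would bound the averaging operator from $L^2(\Omega)$ to $L^2(G)$ by Cauchy--Schwarz, so that $\|\overline F^H\|_{\IE_0(T)}$ is controlled by the $\IE_0(T)$-norm of the three-dimensional integrand, to which the bilinear estimates apply; the purely barotropic summand $(\overline V_{\varepsilon,\delta}+2\overline v_{0,\infty})\cdot\nabla_H\overline V_{\varepsilon,\delta}$ is then estimated directly by the two-dimensional case of Lemma~\ref{lemma:vnablaw2}. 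The main obstacle is not analytic depth but bookkeeping: one must track the $\varepsilon$-scaling of the vertical components so that each limit factor lands in the intended norm, and verify that every bilinear term indeed falls under one of the two shapes of Lemma~\ref{lemma:vnablaw2} after using \eqref{eq:Weps}. Once this is organized, the three asserted inequalities follow by collecting like terms.
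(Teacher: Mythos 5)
Your proposal is correct and follows essentially the same route as the paper's (very terse) proof: expand each forcing term into bilinear summands, estimate those of the shapes $a\cdot\nabla_H b$ and $\tfrac{1}{\varepsilon}w(a)\cdot\partial_z b$ via Lemma~\ref{lemma:vnablaw2} (the paper also cites Lemmas~\ref{lemma:NavierStokes_nonlinear} and~\ref{lemma:vnablaw}, which your use of the embedding $\IE_1(T)\hookrightarrow L^4(0,T;H^{3/2})$ effectively subsumes), and absorb the $\tfrac{1}{\varepsilon}$ prefactors through \eqref{eq:Weps}. Your additional bookkeeping of the difference/limit classification and of the vertical average in $\overline{F}^H_{\varepsilon,\delta}$ is consistent with, and more explicit than, what the paper records.
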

\begin{proof}
The terms without prefactor $\tfrac{1}{\varepsilon}$ and $\varepsilon$ can be estimated directly by Lemma~\ref{lemma:NavierStokes_nonlinear} and Lemma~\ref{lemma:vnablaw2}. The terms with prefactor $\tfrac{1}{\varepsilon}$ 
are of the form $\tfrac{1}{\varepsilon}W_{\varepsilon,\delta} \partial_z \tilde{V}_{\varepsilon,\delta}$ and $\tfrac{1}{\varepsilon}W_{\varepsilon,\delta} \partial_z W_{\varepsilon,\delta}
$.
These can be estimated by
 Lemmas~\ref{lemma:vnablaw} and~\ref{lemma:vnablaw2}. This concludes the proof.	
\end{proof}

\section{Additional regularity}\label{sec:additional_reg}
In this section we discuss the proof of Proposition~\ref{prop:PE_H}. As a first step we state the following additional regularity results on the solution of \eqref{eq_PEH} dealing with the preservation of regularity in the vertical directions. Their proofs rely on energy estimates and are not given here but are elaborated in detail in \cite{Wrona_PhD}.  
\begin{proposition}[Proposition 4.3.26 in \cite{Wrona_PhD}]\label{prop:HNL2}
	Let	
	\begin{align*}
		N\in \IN, \quad \hbox{and}\quad v_0\in H_z^NL^2_{xy}\cap H^1_{\overline{\sigma},\eta}(\Omega) \quad \hbox{for}\quad \eta>0. 	
	\end{align*}
	Then the strong solution $v$ to \eqref{eq_PEH} has the additional regularity
	\begin{align*}
		v\in L^\infty(0,T;H^N_zL^2_{xy})\quad \hbox{and}\quad 
		\nabla_H v\in L^2(0,T;H^N_zL^2_{xy}).
	\end{align*}
\end{proposition}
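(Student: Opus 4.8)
The plan is to argue by strong induction on $N$, running a vertical-derivative energy estimate for \eqref{eq_PEH}. The base case $N=1$ is exactly the regularity $v\in\IE_z(T)=L^2(0,T;H^1_zH^1_{xy})\cap L^\infty(0,T;H^1_zL^2_{xy})$ furnished by Proposition~\ref{prop:PE_H}(a), which already gives $\partial_z v\in L^\infty(0,T;L^2)$ and $\nabla_H\partial_z v\in L^2(0,T;L^2)$; the case $N=0$ is contained in the same statement. So I would assume the assertion for all vertical orders $\le N-1$ and prove it for $N$.

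The key structural observation is that since $\partial_z p=0$ in \eqref{eq_PEH}, applying $\partial_z^N$ to the momentum equation for $N\ge 1$ eliminates the pressure entirely, because $\partial_z^N\nabla_H p=\nabla_H\partial_z^N p=0$. Testing the resulting equation for $\partial_z^N v$ against $\partial_z^N v$ in $L^2(\Omega)$ yields
\begin{align*}
\tfrac12\tfrac{d}{dt}\|\partial_z^N v\|_{L^2}^2+\|\nabla_H\partial_z^N v\|_{L^2}^2=-\langle\partial_z^N(u\cdot\nabla v),\partial_z^N v\rangle .
\end{align*}
Expanding $u\cdot\nabla v=v\cdot\nabla_H v+w\,\partial_z v$ by the Leibniz rule, the top-order $k=0$ contribution equals $u\cdot\nabla(\partial_z^N v)$ and vanishes against $\partial_z^N v$ thanks to $\divergence u=\divergence_H v+\partial_z w=0$ together with periodicity. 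Hence only the commutator terms with $1\le k\le N$ survive,
\begin{align*}
\langle\partial_z^N(u\cdot\nabla v),\partial_z^N v\rangle=\sum_{k=1}^N\binom{N}{k}\Big(\langle\partial_z^k v\cdot\nabla_H\partial_z^{N-k}v,\partial_z^N v\rangle+\langle\partial_z^k w\,\partial_z^{N-k+1}v,\partial_z^N v\rangle\Big),
\end{align*}
and every vertical derivative of $w$ is rewritten as $\partial_z^k w=-\divergence_H\partial_z^{k-1}v$ via \eqref{eq:w}, so that all factors are vertical derivatives of $v$ of order at most $N$.

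Each surviving term is then a trilinear expression carrying exactly one horizontal derivative, and in every such term the factor bearing that horizontal derivative has vertical order $\le N-1$ (its order is $N-k$ in the first sum and $k-1$ in the second). Consequently the one or two factors of full vertical order $N$ can be placed in $L^2_zL^4_{xy}$ and estimated by the anisotropic Hölder inequality \eqref{eq:Holder}, the two-dimensional Ladyzhenskaya inequality \eqref{eq:Ladyzhenskaya}, and the one-dimensional embedding $H^1_z\hookrightarrow L^\infty_z$; the worst terms, namely $\langle\divergence_H v\,\partial_z^N v,\partial_z^N v\rangle$ (from $k=1$ in the $w$-sum) and $\langle\partial_z^N v\cdot\nabla_H v,\partial_z^N v\rangle$ (from $k=N$), are bounded by
\begin{align*}
\|\nabla_H v\|_{L^\infty_zL^2_{xy}}\,\|\partial_z^N v\|_{L^2_zL^4_{xy}}^2\le\tfrac14\|\nabla_H\partial_z^N v\|_{L^2}^2+C\,\|\nabla_H v\|_{H^1_zL^2_{xy}}^2\,\|\partial_z^N v\|_{L^2}^2
\end{align*}
after Young's inequality. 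The first summand is absorbed into the left-hand side, while every coefficient multiplying $\|\partial_z^{\le N}v\|_{L^2}^2$ is a product of norms of vertical derivatives of order $\le N-1$ of $v$ (together with $\nabla_H v\in L^2(0,T;H^1_zL^2_{xy})$), hence lies in $L^1(0,T)$ by the induction hypothesis and the base regularity. Grönwall's inequality then delivers $\partial_z^N v\in L^\infty(0,T;L^2)$ and, upon integrating the differential inequality in time, $\nabla_H\partial_z^N v\in L^2(0,T;L^2)$, which closes the induction.

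The main obstacle is the nonlinear transport term, and specifically the vertical transport $w\,\partial_z v$: because \eqref{eq_PEH} carries no vertical dissipation, there is no $\partial_z$-smoothing to spend, so the highest vertical order must be controlled purely through the divergence-free cancellation, and every vertical derivative falling on $w$ has to be converted into a horizontal derivative of $v$ via $\partial_z^k w=-\divergence_H\partial_z^{k-1}v$. The delicate bookkeeping is to keep all resulting terms subcritical --- i.e. to ensure that after \eqref{eq:Ladyzhenskaya} the genuinely top-order factors appear only to a power absorbable by $\|\nabla_H\partial_z^N v\|_{L^2}^2$, while the remaining coefficients are time-integrable --- and it is precisely here that the base regularity $v\in\IE_z(T)\cap\IE_H(T)$ and the inductive control of lower vertical orders enter. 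The full verification of these anisotropic estimates is carried out in \cite{Wrona_PhD}.
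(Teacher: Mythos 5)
Your proposal follows essentially the same route as the paper's own (sketched) proof: apply $\partial_z^r$ for $r\le N$, test with $\partial_z^r v$, exploit $\partial_z p=0$ and the divergence-free cancellation of the top-order transport term, rewrite $\partial_z^k w=-\divergence_H\partial_z^{k-1}v$, estimate the commutator terms by anisotropic H\"older/Ladyzhenskaya, and close with Gr\"onwall -- the paper likewise defers the detailed commutator bookkeeping to \cite{Wrona_PhD}. Your write-up is in fact more explicit than the paper's sketch about why the pressure drops out and which terms are critical, and I see no gap in it.
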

\begin{proof}[Sketch of the proof]
	Applying $\partial_z^r$ to the velocity equation~\eqref{eq_PEH} for $r\leq N$ and testing with $\partial_z^r v$ gives
	\begin{align*}
	\tfrac{d}{dt} \norm{\partial_z^r v}_{L^2}^2 + 2\norm{\partial_z^r\nabla_H v}_{L^2}^2  = - 2\langle \partial_z^r(v\cdot\nabla_H v + w\partial_z v) , \partial_z^rv \rangle_{L^2}.
	\end{align*}
This can be estimated as in \cite[Proposition 4.3.7.]{Wrona_PhD} for some constant $C_r>0$ by
	\begin{align*}
	\tfrac{d}{dt} \norm{\partial_z^r v}_{L^2}^2 + 2\norm{\partial_z^r\nabla_H v}_{L^2}^2  \leq C_r 
	(1+ \norm{v}_{H_z^{r-1}H_{xy}^1}^2\norm{v}_{H_z^{r-1}L_{xy}^2}^2)
	- \langle \partial_z^r(v\cdot\nabla_H v + w\partial_z v)\norm{\partial_z^r v}_{L^2}^2 , \partial_z^rv \rangle_{L^2}.
	\end{align*}
	Then the claim follows by Gr\"onwall's inequality. 
\end{proof}

\begin{proposition}[Proposition 4.3.27 in \cite{Wrona_PhD}]\label{prop:L2L4}
	Let	
	\begin{align*}
		v_0\in H^1_{\overline{\sigma},\eta}(\Omega) \quad \hbox{for}\quad \eta>0 \quad \hbox{with}\quad \partial_z v_0\in H^1(\Omega). 	
	\end{align*}
	Then the strong solution $v$ to \eqref{eq_PEH} has the additional regularity
	\begin{align*}
		v\in L^\infty(0,T;H^1_zH^1_{xy})\quad \hbox{and}\quad 
		\nabla_H v\in L^2(0,T;H^1_zH^1_{xy}).
	\end{align*}
	In particular $v\in L^4(0,T;H^1_zH^{1,4}_{xy})$.
\end{proposition}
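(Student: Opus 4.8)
The plan is to upgrade the horizontal regularity of $\partial_z v$ by a weighted energy estimate on the vertically differentiated equation, in the spirit of the sketch of Proposition~\ref{prop:HNL2}. First I would collect the regularity already at hand: the strong solution satisfies $v\in\IE_z(T)\cap\IE_H(T)$ by Proposition~\ref{prop:PE_H}(a), and since $\partial_z v_0\in H^1(\Omega)$ forces $\partial_z^2 v_0\in L^2(\Omega)$, i.e. $v_0\in H^2_zL^2_{xy}$, Proposition~\ref{prop:HNL2} applied with $N=1$ and $N=2$ gives $v\in L^\infty(0,T;H^2_zL^2_{xy})$ and $\nabla_H v\in L^2(0,T;H^2_zL^2_{xy})$. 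The decisive simplification is that the pressure is vertically constant: differentiating the velocity equation in \eqref{eq_PEH} in $z$ and using $\partial_z p=0$ removes the pressure gradient entirely, leaving, with $\phi:=\partial_z v$ and $\partial_z w=-\divergence_H v$ from \eqref{eq:w}, the forced horizontal heat equation $\partial_t\phi-\Delta_H\phi+\partial_z(u\cdot\nabla v)=0$.

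Next I would test this equation with $-\Delta_H\phi$ and integrate over $\Omega$; formally (and rigorously via a Galerkin scheme or difference quotients in $z$ as in \cite{Wrona_PhD}) this yields
\begin{align*}
\tfrac{1}{2}\tfrac{d}{dt}\norm{\nabla_H\partial_z v}_{L^2}^2 + \norm{\Delta_H\partial_z v}_{L^2}^2 = \langle \partial_z(u\cdot\nabla v),\, \Delta_H\partial_z v\rangle_{L^2}.
\end{align*}
Writing $\partial_z(u\cdot\nabla v)=\partial_z v\cdot\nabla_H v+v\cdot\nabla_H\partial_z v-(\divergence_H v)\partial_z v+w\,\partial_z^2 v$, the aim is to bound the right-hand side by $\tfrac12\norm{\Delta_H\partial_z v}_{L^2}^2+g(t)\norm{\nabla_H\partial_z v}_{L^2}^2+h(t)$ with $g,h\in L^1(0,T)$, so that Grönwall's inequality controls $\norm{\nabla_H\partial_z v}_{L^\infty_tL^2}$ and $\norm{\Delta_H\partial_z v}_{L^2_tL^2}$.

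The main obstacle is the transport term $w\,\partial_z^2 v$: because \eqref{eq_PEH} has no vertical dissipation, it cannot be absorbed into $\norm{\Delta_H\partial_z v}_{L^2}^2$ directly, and a naive bound produces an uncontrollable third-order factor. The remedy is the divergence-free cancellation. After one integration by parts in the horizontal variables its contribution splits into $-\sum_i\langle(\partial_i w)\partial_z\phi,\partial_i\phi\rangle$ and $-\sum_i\langle w\,\partial_z\partial_i\phi,\partial_i\phi\rangle$; the latter, upon integrating by parts in $z$ (boundary terms vanishing by the periodicity and parity) and using $\partial_z w=-\divergence_H v$, reduces to the lower-order quantity $\tfrac12\langle\divergence_H v,|\nabla_H\phi|^2\rangle$. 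The anisotropic Ladyzhenskaya inequality \eqref{eq:Ladyzhenskaya} together with $H^1_z\hookrightarrow L^\infty_z$ then bound it by $\tfrac{1}{8}\norm{\Delta_H\partial_z v}_{L^2}^2+C\norm{\nabla_H v}_{H^1_zL^2_{xy}}^2\norm{\nabla_H\partial_z v}_{L^2}^2$, whose coefficient lies in $L^1(0,T)$ by the base regularity. The remaining terms, namely $(\partial_i w)\partial_z\phi$ and the two genuinely quadratic transport terms, are handled identically using \eqref{eq:Holder}, \eqref{eq:Ladyzhenskaya}, and the $H^2_z$-control from Proposition~\ref{prop:HNL2}, always absorbing the top-order horizontal factor into the dissipation and leaving time-integrable coefficients. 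Grönwall then delivers $v\in L^\infty(0,T;H^1_zH^1_{xy})$ and $\nabla_H v\in L^2(0,T;H^1_zH^1_{xy})$.

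Finally, the ``in particular'' is immediate from Lemma~\ref{lemma:emdebbingIE1L4}: the regularity just obtained means that both $v$ and $\nabla_H v$ belong to $L^2(0,T;H^1_zH^1_{xy})\cap L^\infty(0,T;H^1_zL^2_{xy})=\IE_z(T)$, so applying the second embedding of Lemma~\ref{lemma:emdebbingIE1L4} to $v$ and to $\nabla_H v$ gives $v,\nabla_H v\in L^4(0,T;H^1_zL^4_{xy})$, that is, $v\in L^4(0,T;H^1_zH^{1,4}_{xy})$.
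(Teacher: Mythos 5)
Your proposal is correct and follows essentially the same route as the paper's sketch: differentiate \eqref{eq_PEH} in $z$ (the pressure drops out since $\partial_z p=0$), test with $\Delta_H\partial_z v$, exploit the divergence-free cancellation to tame the vertical transport term $w\,\partial_z^2 v$, control the remaining coefficients via Proposition~\ref{prop:HNL2} with $N=2$, and close with Gr\"onwall; the final embedding into $L^4(0,T;H^1_zH^{1,4}_{xy})$ via Lemma~\ref{lemma:emdebbingIE1L4} is also as intended. Your write-up is in fact more explicit than the paper's sketch about why the third-order term $\langle w\,\partial_z\partial_i\phi,\partial_i\phi\rangle$ reduces to a lower-order quantity, which is the only genuinely delicate point.
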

\begin{proof}[Sketch of the proof]
		Applying $\partial_z$ to the velocity equation~\eqref{eq_PEH}  and testing with $\partial_z \Delta_H v$ gives
	\begin{align*}
	\tfrac{d}{dt} \norm{\partial_z \nabla_H v}_{L^2}^2 + 2\norm{\Delta_H v}_{L^2}^2  = - 2\langle \partial_z(v\cdot\nabla_H v + w\partial_z v) , \partial_z \Delta_H v \rangle_{L^2}.
	\end{align*}
		This can be estimated  by
	\begin{multline*}
\tfrac{d}{dt} \norm{\partial_z \nabla_H v}_{L^2}^2 + \norm{\partial_z\Delta_H v}_{L^2}^2  \leq C 
	\norm{v}_{H^1}(1+ \norm{v}_{H^1}+	\norm{\Delta_Hv}_{L^2})\norm{\nabla_Hv}_{H}^2 \\
	+\norm{\nabla_H\partial_z^2v}_{L^2}^2
	+\norm{\partial_z^2v}_{L^2}^2\norm{\nabla_Hv}_{L^2}^2(\norm{\nabla_Hv}_{L^2} + \norm{\Delta_Hv}_{L^2})^2.
	\end{multline*}
	By the estimates on $\norm{\nabla_Hv}_{L^2}$ and $\norm{\Delta_H^2v}_{L^2}$ in 
	\cite[Proposition 5.8]{HusseinSaalWrona} as well as Proposition~\ref{prop:HNL2} with $N=2$ the relevant terms on the left hand side are integrable. Hence, the claim follows by Gr\"onwall's inequality.  
\end{proof}

To derive additional regularity on $w(v)$,
let $v$ be a solutions to the primitive equations with horizontal viscosity~\eqref{eq_PEH} and $p$ the associated pressure. Then one can apply  $\int_{-1}^z-\divergence_H\cdot$ to \eqref{eq_PEH} and consider $w=w(v)$ which solves adding on both side of \eqref{eq_PEH} the term $-\partial_z^2 w=\partial_z \divergence_H v$ 
%
\begin{align*}
\partial_t w - \Delta w = \int_{-1}^z \divergence_H (\nabla_H p + (v, w)\cdot \nabla v ) + \partial_z \divergence_H v, \quad w(0)=w_0.
\end{align*}
This equation  allows us to derive maximal regularity estimates for $w$ provided $v$ is sufficiently regular. 
Consider for given $v, p$ and $\omega_0$
\begin{align}\label{eq:omega}
	\partial_t \omega - \Delta \omega = \int_{-1}^z \divergence_H (\nabla_H p + (v, \omega)\cdot \nabla v ) + \partial_z \divergence_H v, \quad \omega(0)=\omega_0.
\end{align}
\begin{lemma}\label{lemma:regw}
	Let  $v\in \IE_1^v(T)$, $p\in \IE_0^p(T)$, 
	and $\omega_0\in H^1(\Omega)$,
	then   there is a unique solution $\omega\in \IE_1(T)$ to the equation \eqref{eq:omega}.
\end{lemma}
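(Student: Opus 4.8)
The plan is to read \eqref{eq:omega} as a \emph{linear} inhomogeneous heat equation for $\omega$ and to solve it by maximal $L^2$-regularity combined with a perturbation/contraction argument, the $\omega$-dependence being only of first order. First I would isolate this dependence: since $(v,\omega)\cdot\nabla v = (v\cdot\nabla_H)v + \omega\,\partial_z v$, the right-hand side of \eqref{eq:omega} decomposes as $f_0 + L_v\omega$ with the fixed datum
\[
f_0 := \int_{-1}^z \divergence_H\big(\nabla_H p + (v\cdot\nabla_H)v\big)\,d\xi + \partial_z\divergence_H v
\]
depending only on the given $v,p$, and the linear operator
\[
L_v\omega := \int_{-1}^z \divergence_H(\omega\,\partial_z v)\,d\xi = \divergence_H\!\int_{-1}^z \omega\,\partial_z v\,d\xi .
\]
The full Laplacian $\Delta=\Delta_H+\partial_z^2$, equipped with the periodicity and (odd) parity conditions inherited from $w$, is self-adjoint and negative on $L^2(\Omega)$ and hence enjoys maximal $L^2$-regularity. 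This furnishes a bounded solution operator $\mathcal S\colon \IE_0(T)\times X_\gamma\to\IE_1(T)$, $(f,\omega_0)\mapsto\omega$, for $\partial_t\omega-\Delta\omega=f$, $\omega(0)=\omega_0$, where $X_\gamma=H^1_{per}(\Omega)$ matches the assumption $\omega_0\in H^1(\Omega)$. Equation \eqref{eq:omega} then becomes the fixed-point identity $(\mathrm I-\mathcal S L_v)\omega = \mathcal S(f_0,\omega_0)$ in $\IE_1(T)$.

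Second I would check that the datum lands in the base space, i.e.\ $f_0\in\IE_0(T)=L^2(0,T;L^2(\Omega))$. The term $\partial_z\divergence_H v$ is immediate from $v\in\IE_1^v(T)\hookrightarrow L^2(0,T;H^2)$. For the pressure contribution one writes $\int_{-1}^z\divergence_H\nabla_H p\,d\xi=\Delta_H\int_{-1}^z p\,d\xi$, whose membership in $\IE_0(T)$ is exactly the regularity carried by the hypothesis $p\in\IE_0^p(T)$. The integrated convective term $\int_{-1}^z\divergence_H\big((v\cdot\nabla_H)v\big)\,d\xi$ is controlled in $\IE_0(T)$ by the bilinear and embedding estimates of Section~\ref{sec:non_lin} (Lemmas~\ref{lemma:NavierStokes_nonlinear}, \ref{lemma:vnablaw} and \ref{lemma:emdebbingIE1L4}), using that $\int_{-1}^z(\cdot)\,d\xi$ is bounded in the vertical variable. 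Consequently $\mathcal S(f_0,\omega_0)\in\IE_1(T)$.

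Third I would treat $L_v$ as a lower-order perturbation and close a contraction on a short interval. From $L_v\psi=\divergence_H\int_{-1}^z\psi\,\partial_z v\,d\xi$ one has $\nabla_H$ of the inner integral producing the two bilinear quantities $\nabla_H\psi\,\partial_z v$ and $\psi\,\nabla_H\partial_z v$; estimating these in $\IE_0(0,T')$ by the lemmas of Section~\ref{sec:non_lin} and Hölder in time isolates an $L^a(0,T')$-norm of the $v$-factors which tends to $0$ as $T'\to0$ by absolute continuity of the integral, since $v\in\IE_1^v(T)$. Hence $\|\mathcal S L_v\|_{\IE_1(0,T')\to\IE_1(0,T')}\le C_{MR}\,c(T')<1$ for $T'$ small, so $\mathrm I-\mathcal S L_v$ is invertible by Neumann series and \eqref{eq:omega} has a unique solution in $\IE_1(0,T')$. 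As the smallness threshold depends only on $v$ over the subinterval and $v\in\IE_1^v(0,T)$ globally, I would cover $[0,T]$ by finitely many such subintervals and concatenate the solutions; uniqueness on $[0,T]$ follows because the difference of two solutions solves the homogeneous problem and is forced to vanish by the same estimate together with Gr\"onwall's inequality.

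The main obstacle is the function-space bookkeeping in the second step, namely verifying $f_0\in\IE_0(T)$: the pressure term $\Delta_H\int_{-1}^z p$ and the twice-differentiated convective term $\int_{-1}^z\divergence_H((v\cdot\nabla_H)v)$ are precisely where the regularity of $v\in\IE_1^v(T)$ and $p\in\IE_0^p(T)$ must be consumed, and this requires the anisotropic, mixed space-time estimates of Section~\ref{sec:non_lin} rather than naive Sobolev products. The accompanying technical point is the time-gain $c(T')\to0$ for the first-order (transport-type) perturbation $L_v$, which is what allows the maximal regularity of $-\Delta$ to be transferred to $-\Delta-L_v$ and the contraction to close.
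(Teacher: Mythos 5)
Your overall strategy -- splitting the right-hand side into a datum plus a term linear in $\omega$, invoking maximal $L^2$-regularity for the periodic Laplacian, and absorbing the $\omega$-dependent transport-type term by a short-time contraction/Neumann-series argument patched over $[0,T]$ -- is essentially the same architecture as the paper's proof, which rewrites \eqref{eq:omega} as $\partial_t\omega-\Delta\omega-B(v,\omega)=f(v)+\partial_z\divergence_H v$ and then defers the perturbation estimates to \cite[Proposition 4.5]{convergence}. The decomposition, the role of $\partial_z\divergence_H v$ as an extra $\IE_0$-forcing controlled by $\|v\|_{\IE_1}$, and the lower-order treatment of the $\omega$-dependence all match.

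There is, however, a genuine gap in your Step 2, precisely at the point you flag as the main obstacle: the pressure term. You claim that $\int_{-1}^z\divergence_H\nabla_H p\,d\xi=\Delta_H\int_{-1}^z p\,d\xi$ lies in $\IE_0(T)=L^2(0,T;L^2(\Omega))$ ``exactly'' by the hypothesis $p\in\IE_0^p(T)$. But the pressure spaces of Section~\ref{sec:functionspaces} carry at most one spatial derivative ($\IE_1^p(t,T)\subset L^2(t,T;H^1_{per}(\Omega))$, and $\IE_0^p$ can only be weaker), so $\Delta_H p$ lands in $L^2(0,T;H^{-1})$ at best and cannot be placed in $\IE_0(T)$ from the stated regularity alone. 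The paper's proof avoids this by \emph{using the equation}: applying $\divergence_H\overline{\,\cdot\,}$ to \eqref{eq_PEH} yields $-\Delta_H p=\tfrac12\divergence_H\int_{-1}^{1}(v,w(v))\cdot\nabla v$, and since $\partial_z p=0$ the pressure contribution becomes $f(v)=\tfrac{z+1}{2}\int_{-1}^{1}\divergence_H\bigl((v,w(v))\cdot\nabla v\bigr)$, a quadratic expression in $v$ whose vertical average makes it amenable to the (effectively two-dimensional) bilinear estimates. Without this substitution your fixed-point datum $\mathcal S(f_0,\omega_0)$ is not defined in $\IE_1(T)$. A related, smaller caveat: your claim that $\int_{-1}^z\divergence_H\bigl((v\cdot\nabla_H)v\bigr)\,d\xi\in\IE_0(T)$ is not a consequence of Lemma~\ref{lemma:NavierStokes_nonlinear} applied naively (that lemma controls $v_1\partial_i v_2$, not an additional horizontal divergence thereof); the vertical integration $\int_{-1}^z$, which gives $L^\infty_z$-control and lets the two-dimensional Ladyzhenskaya inequality act in $(x,y)$, is what makes these terms close, and your argument should make that explicit rather than appeal to the section's lemmas generically.
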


\begin{proof}
	The equation \eqref{eq:omega} can be rewritten as
	\begin{align*}
		\partial_t \omega - \Delta \omega - B(v,\omega) = f(v)+\partial_z \divergence_H v, \quad \omega(0)=w_0, \quad \hbox{where } B(v,\omega) = - \int_{-1}^z \divergence_H[(v, \omega)\cdot \nabla v]
	\end{align*}
	and applying $\divergence_H\overline{\cdot}$ to \eqref{eq_PEH} it follows that 
	\begin{align*}
		-\Delta_H p = \tfrac{1}{2} \divergence_H\int_{-1}^1 [(v,w(v))\cdot \nabla v] \quad \hbox{and hence} \quad 
		f(v)=\tfrac{z+1}{2} \int_{-1}^1 \divergence_H [(v,w(v))\cdot \nabla v].
	\end{align*}
		Since $v\in \IE_1(T)$, one estimates 
	\begin{align*}
		\norm{\partial_z \divergence_H v}_{\IE_0} \leq \norm{v}_{\IE_1}.
	\end{align*}
	The remaining terms can be estimated exactly as done in the proof of \cite[Proposition 4.5]{convergence} which gives the claim. 
\end{proof}


\begin{proof}[Proof of Proposition~\ref{prop:PE_H}]
	The existence and uniqueness statement in part (a) is taken from \cite{CaoLiTiti_PHE_2017}.

Let $v_0$ be as in Proposition~\ref{prop:PE_H} (b) and $v\in \IE_H(T)\cap\IE_z(T)$ be the solution to \eqref{eq_PEH} with initial condition $v_0$ from Proposition~\ref{prop:PE_H}  (a). 
Then by Proposition~\ref{prop:HNL2}
\begin{align}\label{eq:regv}
	v\in \IE_H(T)\cap\IE_z(T)\cap L^2(0,T;H^2_zH^1_{xy})\cap L^\infty(0,T;H_z^2L^2_{xy}) \hookrightarrow \IE_1(T)\cap L^\infty(0,T;H_z^2L^2_{xy}).
\end{align}
Thereby, Lemma~\ref{lemma:regw} is applicable, and there is a unique solution $\omega\in \IE_1(T)$ to \eqref{eq:omega}. Now, we have to show that indeed $\omega =w(v)$.  Integrating over the first equation in \eqref{eq_PEH} with respect to the $z$-variable and testing the resulting equation with $\nabla_H(w(v)-\omega)$ we find
\begin{multline*}
\langle \int_{-1}^z \partial_t v, \nabla_H(w(v)-\omega) \rangle_{L^2} + \langle \nabla w, \nabla(w(v)-\omega) \rangle_{L^2} \\
=\langle \int_{-1}^z (v,w(v))\cdot \nabla v +\nabla_H p -\partial_z^2v, \nabla_H(w(v)-\omega) \rangle_{L^2}.
\end{multline*}
Similarly, testing now \eqref{eq:omega} with $w(v)-\omega$
gives
\begin{multline*}
	\langle \partial_t \omega, w(v)-\omega \rangle_{L^2} + \langle \nabla \omega, \nabla(w(v)-\omega) \rangle_{L^2}
	=\langle \divergence_{H}\int_{-1}^z (v,\omega)\cdot \nabla v +\nabla_H p -\partial_z^2v, w(v)-\omega \rangle_{L^2}.
\end{multline*}
Integrating by parts and taking the difference gives
\begin{multline*}
\tfrac{1}{2}	\partial_t \norm{w-\omega}^2_{L^2} +\norm{\nabla (w-\omega)}^2_{L^2}=
	\langle (w(v)-\omega)\partial_z v, \nabla_H(w(v)-\omega) \rangle_{L^2} \\
	\leq \norm{(w(v)-\omega)}_{L^2}^{1/2}\norm{(w(v)-\omega)}_{L^2_zH^1_{xy}}^{1/2}
	\norm{\partial_z v}^{1/2}_{L^2_zH^1_{xy}}	\norm{\partial_z v}^{1/2}_{H^1_zL^2_{xy}}
	\norm{\nabla_H(w(v)-\omega)}_{L^2} \\
\leq\norm{(w(v)-\omega)}_{L^2}^{2}
  C(1+
		\norm{\partial_z v}^{2}_{L^2_zH^1_{xy}}	\norm{\partial_z v}^{2}_{H^1_zL^2_{xy}})
		+	\tfrac{1}{2}\norm{\nabla_H(w(v)-\omega)}_{L^2}^2,
\end{multline*}
where we used \cite[Lemma 6.2 b)]{HusseinSaalWrona} and Young's inequality. By \eqref{eq:regv}
\begin{align*}
	\norm{\partial_z v}^{2}_{L^2_zH^1_{xy}}	\norm{\partial_z v}^{2}_{H^1_zL^2_{xy}} \in L^\infty(0,T)\subset L^1(0,T),
\end{align*}
and hence by Gr\"onwall's inequality $w(v)=\omega\in \IE_1(T)$
by Lemma~\ref{lemma:regw}.	
	
	Part (c) of Proposition~\ref{prop:PE_H}  follows directly from Proposition~\ref{prop:L2L4}, and part (d) from Proposition~\ref{prop:HNL2}.
	\end{proof} 



\begin{remark}[Regularity assumptions in Propositions~\ref{prop:transport_lin} and \ref{prop:estimate_F}]
To apply Proposition~\ref{prop:transport_lin} to the situation of \eqref{eq_Diff}, we choose as indicated in \eqref{eq:nuomega} and \eqref{eq:fvarepsilon} the functions 
\begin{align*}
\nu=(v,\varepsilon w), \quad \omega= w + \tfrac{1}{\varepsilon}W_{\varepsilon,\delta}, \quad\hbox{and}\quad f=f_{\varepsilon,\delta}.
\end{align*}
Moreover, we need some regularity results on $u=(v,w(v))$ to estimate Proposition~\ref{prop:estimate_F}.
\begin{enumerate}[(a)]
	\item To apply Proposition~\ref{prop:estimate_f}, we need $u\in \IE_1(T)$ which holds by Proposition~\ref{prop:PE_H}~(b).
	To control the term $\delta \norm {v}_{L^2(0,T;H^3_zL^2_{xy})}$ in Proposition~\ref{prop:estimate_f}, we need Proposition~\ref{prop:PE_H} (d).
	\item The regularity assumption on $\nu$ in Proposition~\ref{prop:transport_lin} follows  from Proposition~\ref{prop:PE_H} (c) using \eqref{eq:Weps}.
	For the choice \eqref{eq:nuomega} by \eqref{eq:Weps} $\partial \omega= -\divergence_{H} v - \divergence_{H} V_{\varepsilon,\delta}$. 
	\item By Proposition~\ref{prop:PE_H} (a) $v\in \IE_1(T)$, assuming $V_{\varepsilon,\delta}\in \IE_1(T)$, and by the mixed derivative theorem  $\IE_1(T)\hookrightarrow L^2(0,T;H^1_zH^1_{xy})$, then the regularity of $\partial_z \omega$ in Proposition~\ref{prop:transport_lin}  
	is assured.
\item Proposition~\ref{prop:PE_H} (b) allows us also to control $\norm{u}_{\IE_1(T)}$ on the right hand side of the estimate in Proposition~\ref{prop:estimate_F}.	
\end{enumerate}

\end{remark}

\section{Quadratic inequalities and proof of the convergences}
\label{sec:quadratic_inequality}

\subsection{Boundedness by quadratic inequalities and maximal existence intervals}

\begin{lemma}\label{lemma:qudineq_1}
	For $a,b \in  \IR$ with $a<b$ let $X\in C([a,b);[0,\infty))$. If there exists a constant $C>0$ such that for $0<\varepsilon <\tfrac{1}{16C}$ the function $X$ satisfies
	 \begin{align*}
	     X(t)  \leq C X^2(t) + \tfrac{1}{2} X(t) + \varepsilon \quad \hbox {for all } t\in [a,b), \quad \hbox{and} \quad
	     X(a) \leq \tfrac{1}{4C},
	 \end{align*}
	 then
	 \begin{align*}
	 \sup_{t\in [a,b)} X(t) \leq 4 \varepsilon.
	 \end{align*}
\end{lemma}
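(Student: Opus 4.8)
The plan is to read the hypothesis as a quadratic constraint on the single number $X(t)$ at each fixed $t$, and then to promote this pointwise dichotomy to a global bound using the continuity of $X$ together with the initial smallness $X(a)\le \tfrac1{4C}$.

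First I would rearrange the inequality. Absorbing $\tfrac12 X(t)$ into the left-hand side gives $\tfrac12 X(t) \le C X^2(t)+\varepsilon$, that is,
\[
2C\,X^2(t)-X(t)+2\varepsilon \ge 0 \qquad (t\in[a,b)).
\]
The quadratic $q(x)=2Cx^2-x+2\varepsilon$ has discriminant $1-16C\varepsilon$, which is strictly positive precisely because of the standing assumption $\varepsilon<\tfrac1{16C}$; hence $q$ has two distinct real roots
\[
x_\pm=\frac{1\pm\sqrt{1-16C\varepsilon}}{4C},
\]
and the inequality $q(X(t))\ge0$ says exactly that $X(t)\notin(x_-,x_+)$, i.e.\ $X(t)\in[0,x_-]\cup[x_+,\infty)$ for every $t$.

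Next I would record two elementary estimates on the roots. Using $\sqrt{1-s}\ge 1-s$ for $s\in[0,1]$ with $s=16C\varepsilon$ yields $1-\sqrt{1-16C\varepsilon}\le 16C\varepsilon$, whence $x_-\le 4\varepsilon$; this is where the target constant $4$ comes from. On the other hand $\sqrt{1-16C\varepsilon}>0$ gives $x_+>\tfrac1{4C}$. Combined with the hypothesis $X(a)\le\tfrac1{4C}<x_+$ and the dichotomy $X(a)\notin(x_-,x_+)$, this forces the starting value onto the lower branch, $X(a)\le x_-$.

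The main point, and the only place where continuity is genuinely used, is a trapping argument showing that $X$ can never reach the upper branch. If some $t^\ast\in(a,b)$ had $X(t^\ast)>x_-$, then by the dichotomy $X(t^\ast)\ge x_+$; since $X(a)\le x_-<x_+\le X(t^\ast)$ and $X$ is continuous, the intermediate value theorem would produce $\tau\in(a,t^\ast)$ with $X(\tau)=\tfrac12(x_-+x_+)\in(x_-,x_+)$, contradicting $X(\tau)\notin(x_-,x_+)$. Hence $X(t)\le x_-\le 4\varepsilon$ for all $t\in[a,b)$, and taking the supremum gives the claim. I expect no serious obstacle here: the whole argument is elementary, and the only subtlety is to notice that the forbidden gap $(x_-,x_+)$ separates the admissible initial value from the upper branch, so that continuity alone pins $X$ to the small lower branch.
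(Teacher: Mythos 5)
Your proof is correct and follows essentially the same route as the paper's: both read the hypothesis as a quadratic dichotomy $X(t)\notin(x_-,x_+)$, bound the lower root by $4\varepsilon$ (you via $\sqrt{1-s}\ge 1-s$, the paper via rationalizing), and use continuity of $X$ together with $X(a)\le\tfrac{1}{4C}$ to pin $X$ to the lower branch. The only cosmetic difference is that you invoke the intermediate value theorem explicitly where the paper appeals to connectedness of the image $X([a,b))$.
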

\begin{proof}
	From the assumption it follows that
	\begin{align*}
	0 \leq  X^2(t) - \tfrac{1}{2C} X(t) + \tfrac{\varepsilon}{C}, \quad  t\in [a,b),
	\end{align*}
	and since $\tfrac{1}{(4C)^2}  - \tfrac{\varepsilon}{C}>0$ due to the assumption $\varepsilon <\tfrac{1}{16C}$ the solution to this quadratic inequality is for $t\in [a,b)$  either 
	\begin{align*}
	X(t) &\leq  \tfrac{1}{4C} - \left( \tfrac{1}{(4C)^2}  - \tfrac{\varepsilon}{C} \right)^{1/2}=\tfrac{\varepsilon}{C}\left( \tfrac{1}{4C} + \left( \tfrac{1}{(4C)^2}  - \tfrac{\varepsilon}{C} \right)^{1/2}\right)^{-1}\leq 4 \varepsilon \quad \hbox{or} \\
		X(t) &\geq  \tfrac{1}{4C} + \left( \tfrac{1}{(4C)^2}  - \tfrac{\varepsilon}{C} \right)^{1/2}.
	\end{align*}
	Due to the continuity of $X$ the set $X([a,b))\subset [0,\infty)$ is connected, and hence only one of the two possibilities can occur, and since $X(a)\leq  \tfrac{1}{4C}$, the first inequality holds for all $t\in [a,b)$.
\end{proof}

\begin{lemma}\label{lemma:qudineq_2}
		For $a,b \in  \IR$ with $a<b$ let $X\in C([a,b);[0,\infty))$. If there exist constants $C,K>0$ such that for $0<\varepsilon <\min \{\tfrac{1}{64C}, \tfrac{\ln(3/2)}{8K}\}$ the function $X$ satisfies
	\begin{align*}
	X(t)  \leq \left(C X^2(t) + \tfrac{1}{4} X(t) + \varepsilon \right) e^{KX(t)} \quad \hbox {for all } t\in [a,b), \quad \hbox{and} \quad
	X(a) \leq \min\{\tfrac{1}{8C}, \tfrac{\ln(3/2)}{K}\},
	\end{align*}
	then
	\begin{align*}
	\sup_{t\in [a,b)} X(t) \leq 8 \varepsilon.
	\end{align*}
\end{lemma}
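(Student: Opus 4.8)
The plan is to mimic the proof of Lemma~\ref{lemma:qudineq_1}: on the region where $X$ is small the factor $e^{KX}$ is harmless, so the exponential inequality collapses to a quadratic one, after which the same connectedness (gap) argument applies. Set $r := \tfrac{\ln(3/2)}{K}$. First I would observe that whenever $0 \le X(t) \le r$ one has $1 \le e^{KX(t)} \le \tfrac32$, and since $C X(t)^2 + \tfrac14 X(t) + \varepsilon \ge 0$, the hypothesis gives
\[
X(t) \le \tfrac32\big(C X(t)^2 + \tfrac14 X(t) + \varepsilon\big), \quad\text{equivalently}\quad 0 \le X(t)^2 - \tfrac{5}{12C}X(t) + \tfrac{\varepsilon}{C}.
\]
The constraint $\varepsilon < \tfrac{1}{64C}$ guarantees a strictly positive discriminant (indeed $\tfrac{1}{64}=\tfrac{9}{576} < \tfrac{25}{576}$), so this quadratic has two real roots $X_- < X_+$ with $X_+ \ge \tfrac{5}{24C}$; since their product equals $\tfrac{\varepsilon}{C}$, one gets $X_- = \tfrac{\varepsilon/C}{X_+}\le \tfrac{24}{5}\varepsilon < 8\varepsilon$. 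Consequently the function $g(s) := (C s^2 + \tfrac14 s + \varepsilon)e^{Ks} - s$ is strictly negative on the interval $(X_-, \min\{X_+, r\})$, the analogue of the forbidden gap in Lemma~\ref{lemma:qudineq_1}.

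Next I would locate $X(a)$ below this gap. From $X(a) \le \tfrac{1}{8C} = \tfrac{3}{24C} < \tfrac{5}{24C} \le X_+$ and $X(a) \le r$ one has $X(a) < \min\{X_+, r\}$, while the hypothesis forces $g(X(a)) \ge 0$, i.e.\ $X(a) \notin (X_-, \min\{X_+,r\})$; together these yield $X(a) \le X_-$. Finally, since $X$ is continuous, its image $X([a,b))$ is an interval; it contains the point $X(a) \le X_-$ and, by $g(X(t)) \ge 0$, avoids the gap $(X_-, \min\{X_+,r\})$. A connected subset of $[0,\infty)$ that meets $[0,X_-]$ and avoids a nonempty interval starting exactly at $X_-$ must be contained in $[0, X_-]$; hence $\sup_{t\in[a,b)}X(t) \le X_- \le \tfrac{24}{5}\varepsilon \le 8\varepsilon$, as claimed.

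The only genuine subtlety — and the main point where this departs from Lemma~\ref{lemma:qudineq_1} — is that the linearization $e^{KX}\le \tfrac32$, and hence the quadratic inequality, is valid only on $[0,r]$, so the gap it produces may be truncated at $r$ when $X_+ > r$. The hard part is seeing that this causes no trouble: only the left endpoint of the gap is used, and it suffices that $(X_-,\min\{X_+,r\})$ be nonempty. This holds because $X_- \le \tfrac{24}{5}\varepsilon < 8\varepsilon < r$ (the last inequality from $\varepsilon < \tfrac{\ln(3/2)}{8K}$) together with $X_- < X_+$. The connectedness argument then prevents the image of $X$ from ever crossing the gap, so no separate estimate controlling $X$ on $\{X>r\}$ is needed; this is exactly what lets the proof terminate without having to first establish $X\le r$ by an independent continuity argument.
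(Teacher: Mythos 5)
Your proof is correct, and it takes a genuinely different route from the paper's. The paper argues by a two-step bootstrap: it introduces $t^\ast:=\sup\{t\in[a,b): X(s)\le \ln(2)/K \text{ for all } s\in[a,t]\}$, notes that on $[a,t^\ast)$ the exponential factor is at most $2$ so the hypothesis collapses to the inequality of Lemma~\ref{lemma:qudineq_1} with constants $2C$ and $2\varepsilon$, applies that lemma as a black box to get $X\le 8\varepsilon<\ln(2)/K$ there, and derives a contradiction with $X(t^\ast)=\ln(2)/K$ unless $t^\ast=b$. You instead run the gap/connectedness argument once, directly on $g(s)=(Cs^2+\tfrac14 s+\varepsilon)e^{Ks}-s$: the linearization $e^{Ks}\le\tfrac32$ on $[0,r]$ produces a forbidden interval $(X_-,\min\{X_+,r\})$ with $X_-\le\tfrac{24}{5}\varepsilon$, which a connected image starting below the gap cannot cross. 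Your version is more self-contained (it does not invoke Lemma~\ref{lemma:qudineq_1} at all), avoids the continuation argument, and even yields the slightly sharper bound $\tfrac{24}{5}\varepsilon$; the paper's version buys the convenience of reusing Lemma~\ref{lemma:qudineq_1} verbatim with rescaled constants.

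One small imprecision: from $X(a)\le\tfrac{1}{8C}<X_+$ and $X(a)\le r$ you infer $X(a)<\min\{X_+,r\}$, but this inference fails in the admissible edge case $X(a)=r<X_+$ (possible when $r\le\tfrac{1}{8C}$), where $X(a)$ sits exactly at the right endpoint of your open gap. The conclusion still holds: in that case $X_-<r<X_+$ and $e^{Kr}=\tfrac32$ exactly, so $g(r)<0$, contradicting $g(X(a))\ge 0$; hence $X(a)=r$ is impossible and $X(a)<r$ after all. With that one line added, the argument is complete.
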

\begin{proof}
	Consider
	\begin{align*}
	t^\ast:=\sup\{t\in [a,b)\colon X(s)\leq  \tfrac{\ln(2)}{K} \hbox{ for all } s\in [a,t]  \},
	\end{align*}
	where, because of $X(a)\leq\tfrac{\ln(3/2)}{K}<\tfrac{\ln(2)}{K}$ and the continuity of $X$, one has $t^\ast>a$.
	
	Assume now that $t^\ast <b$, then by continuity of $X$ and the assumptions on $X$ it follows that $X(t^\ast)=\tfrac{\ln(2)}{K}$ and 
		\begin{align}\label{eq:qineq}
	X(t)  \leq 2C X^2(t) + \tfrac{1}{2} X(t) + 2\varepsilon  \quad \hbox {for all } t\in [a,t^\ast).
	\end{align}
	Applying now Lemma~\ref{lemma:qudineq_1} on $[a,t^\ast)$, one concludes that
	\begin{align*}
	X(t) \leq \sup_{t\in [a,t^\ast)} X(t) \leq 8 \varepsilon \leq \tfrac{\ln(
		3/2)}{K} <\tfrac{\ln(2)}{K}
	\end{align*}
	which is a contradiction to $X(t^\ast)=\tfrac{\ln(2)}{K}$. Hence $t^\ast=b$, and \eqref{eq:qineq} holds on $[a,b)$ which by Lemma~\ref{lemma:qudineq_1} implies that $\sup_{t\in [a,b)} X(t) \leq 8 \varepsilon$.
\end{proof}

\begin{proposition}\label{prop:QI}
	Let $T>0$ be a finite time, and 
	\begin{align*}
	(X_{\eta})_{\eta\in (0,1)}, \quad X_{\eta} \colon [0,T]\rightarrow [0,\infty] \quad \hbox{with} \quad X_{\eta}(0)=0
	\end{align*}
	be a family of increasing functions.  Assume that
	\begin{enumerate}[(a)]
		\item  $(X_{\eta})_{\eta\in (0,1)}$ has the following local existence property
		\begin{equation}\label{eq:LE}
		\left \{\begin{array}{rl}
		&\hbox{For each } \eta\in (0,1) \hbox{ there exists } s_{\eta}^\ast\in (0,T] \hbox{ such that } \\
		&X_{\eta}\in C([0,s_{\eta}^\ast); [0,\infty)),
		\end{array}\right .\tag{LE}
		\end{equation}
		\item $(X_{\eta})_{\eta\in (0,1)}$ has	the following maximal existence property
		\begin{equation}\label{eq:ME}
		\left \{\begin{array}{rl}
		&\hbox{If }  t_{\eta}^\ast:= \sup \{t\in [0,T]\colon X_{\eta}\in C([0,t]; [0,\infty)) \}< T, \\
		&\hbox {then } \sup_{t\in [0,t_{\eta}^\ast)}X_{\eta}=\infty,
		\end{array}\right .\tag{ME}
		\end{equation}
		\item  there are increasing functions 
		\begin{align*}
		G_i\in C([0,T]; [0,\infty)) \quad\hbox{with} \quad g_i(s,t):=G_i(s)-G_i(t) \hbox{ for } s,t\in [0,T], \quad i=1,2,3,
		\end{align*}
		 a decreasing function $f\in  C((0,T]; [0,\infty))$ and constants $k,K>0$ such that $(X_{\eta})_{\eta\in (0,1)}$ satisfies the following quadratic inequality 
		 	\begin{equation}\label{eq:QI}
		 \left \{\begin{array}{rl}
		 &\hbox{If } X_{\eta}\in C([0,t_2); [0,\infty)) \hbox{ for } t_2\in (0,T],   
		 \hbox { then for } t_1\in [0,t_2) \hbox{ and }  \\
		 &X_{\eta}^{t_1}(t):= X_{\eta}(t)-X_{\eta}(t_1)  \hbox{one has for } t\in (t_1,t_2)\\
		 &X_{\eta}^{t_1}(t) \leq \left( k X_{\eta}^{t_1}(t)^2 + g_2(t,t_1)X_{\eta}^{t_1}(t) + \eta g_3(t,t_1)+ f(t-t_1)X_{\eta}(t_1)\right) e^{K X_{\eta}^{t_1}(t) + g_1(t,t_1)} .
		 \end{array}\right .\tag{QI}
		 \end{equation}
	\end{enumerate}
	If these assumptions hold, then there exists $\eta^\ast=\eta^\ast(T,g_1,g_2,g_3, k)\in (0,1)$ such that
	\begin{align*}
	X_{\eta}(t)\in C([0,T]; [0,\infty)) \quad \hbox{and} \quad \max_{t\in [0,T]} X_{\eta}(t) \leq C^\ast\eta \quad\hbox {for all} \quad \eta \in (0,\eta^\ast),
	\end{align*}
where the constant $C^\ast>0$ is independent of $\eta$.
\end{proposition}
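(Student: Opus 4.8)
The plan is to upgrade the two local estimates, Lemma~\ref{lemma:qudineq_1} and Lemma~\ref{lemma:qudineq_2}, to a global-in-time bound by a continuation argument across a fixed finite partition of $[0,T]$, exploiting the uniform continuity of $G_1,G_2,G_3$ to force the ``coefficient'' increments $g_1,g_2$ to be as small as the hypotheses of Lemma~\ref{lemma:qudineq_2} demand. First I would fix $\eta$, use \eqref{eq:LE} to guarantee that $X_\eta$ exists on a nontrivial initial interval, and read \eqref{eq:ME} as a continuation criterion: it suffices to produce an a~priori bound $\sup_{[0,t_\eta^\ast)}X_\eta\le C^\ast\eta$ on the maximal interval, since by \eqref{eq:ME} finiteness of $X_\eta$ rules out $t_\eta^\ast<T$, after which continuity of $X_\eta$ on all of $[0,T]$ follows. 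Because $X_\eta$ is increasing, $\sup_{[0,s]}X_\eta=X_\eta(s)$, which streamlines the bookkeeping.

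Since $G_1,G_2,G_3\in C([0,T];[0,\infty))$ are uniformly continuous on the compact interval $[0,T]$, I would choose a mesh $h>0$ and a partition $0=\tau_0<\tau_1<\dots<\tau_N=T$ with $\tau_j-\tau_{j-1}\le h$ so that $g_1(t,\tau_{j-1})\le\ln 2$ and $g_2(t,\tau_{j-1})\le\tfrac18$ for every $j$ and all $t\in[\tau_{j-1},\tau_j]$; note $N$ depends only on $T,g_1,g_2$. The induction hypothesis is that $X_\eta$ is continuous on $[0,\tau_{j-1}]$ with $a_{j-1}:=X_\eta(\tau_{j-1})\le C^\ast\eta$, the base case being $a_0=X_\eta(0)=0$. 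On the inductive step I apply \eqref{eq:QI} with $t_1=\tau_{j-1}$ to the increment $Z:=X_\eta^{\tau_{j-1}}$, which vanishes at $\tau_{j-1}$, on the interval $[\tau_{j-1},t_2)$ with $t_2:=\min(t_\eta^\ast,\tau_j)$. Pulling the fixed factor $e^{g_1(t,\tau_{j-1})}\le 2$ out of the exponential recasts \eqref{eq:QI} in exactly the form required by Lemma~\ref{lemma:qudineq_2}, with quadratic constant $C=e^{g_1}k\le 2k$, linear coefficient $e^{g_1}g_2\le\tfrac14$, and forcing bounded by the constant $\varepsilon_j:=2\big(\eta\,G_3(T)+f_\ast\,a_{j-1}\big)$, where $f_\ast:=\sup_{(0,T]}f$. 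As $\eta\,G_3(T)=O(\eta)$ and $a_{j-1}\le C^\ast\eta$, the forcing is of order $\eta$, so for $\eta$ below a threshold that is uniform in $j$ (the lemma's constants $\tfrac1{64C}$ and $\tfrac{\ln(3/2)}{8K}$ are uniform because $C\le 2k$), Lemma~\ref{lemma:qudineq_2} yields $\sup_{[\tau_{j-1},t_2)}Z\le 8\varepsilon_j=O(\eta)$. This finite bound together with \eqref{eq:ME} excludes $t_2=t_\eta^\ast<\tau_j$, so $t_\eta^\ast\ge\tau_j$, $X_\eta$ is continuous on $[0,\tau_j]$, and $a_j\le a_{j-1}+8\varepsilon_j$.

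The previous step gives the linear recursion
\[ a_j \le \rho\,a_{j-1}+\beta, \qquad \rho:=1+16 f_\ast,\quad \beta:=16\,G_3(T)\,\eta, \]
whence $a_j\le\beta\,(\rho^{N}-1)/(\rho-1)$. Since $N$, $\rho$, and $\beta/\eta$ are all independent of $\eta$, this produces $\sup_{[0,T]}X_\eta=a_N\le C^\ast\eta$ with $C^\ast=C^\ast(T,g_1,g_2,g_3,k)$, as claimed. Finally I would fix $\eta^\ast\in(0,1)$ small enough that $C^\ast\eta$ meets both smallness requirements of Lemma~\ref{lemma:qudineq_2} (on the initial value, which here is even $0$, and on $\varepsilon_j$) for every $\eta<\eta^\ast$, which closes the induction and hence the proof.

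The delicate point, and the step I expect to be the main obstacle, is the memory term $f(t-t_1)X_\eta(t_1)$ with $t_1=\tau_{j-1}$. Lemma~\ref{lemma:qudineq_2} literally requires a constant forcing on the whole subinterval, and since $f$ is only assumed decreasing and continuous on $(0,T]$ it might blow up as its argument tends to $0$, so $f_\ast$ need not be finite. In the applications behind Theorems~\ref{thm:main1} and~\ref{thm:main2} the kernel $f$ arises from the trace of the restarted solution at time $t_1$ and is bounded, so $f_\ast<\infty$ and the recursion above applies verbatim; this boundedness is exactly the structural property that turns the memory term into a genuine $O(\eta)$ forcing. For a general unbounded decreasing $f$ one must argue more carefully near the base point: using $a_{j-1}=O(\eta)$ and the continuity of $Z$ with $Z(\tau_{j-1})=0$, the set where the forcing exceeds $\tfrac1{64C}$ is an initial layer $(\tau_{j-1},\tau_{j-1}+s_\eta)$ with $s_\eta\to0$ as $\eta\to0$, on which $Z$ is controlled purely by continuity, while on the complement the dichotomy and connectedness argument of Lemma~\ref{lemma:qudineq_1} confines $Z$ to the lower root branch. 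The real book-keeping heart of the proof is to verify that the accumulated constant $C^\ast$ neither grows with the number $N$ of steps beyond the harmless geometric factor $\rho^N$ nor degenerates as $\eta\to0$.
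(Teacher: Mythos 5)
Your overall architecture is the same as the paper's: a fixed partition of $[0,T]$ chosen via uniform continuity of the $G_i$ so that the increments $g_1,g_2$ fall below the thresholds of Lemma~\ref{lemma:qudineq_2}, an induction along the partition with Lemma~\ref{lemma:qudineq_2} applied on each cell, and the continuation criterion \eqref{eq:ME} to push the maximal existence time past each node. The quantitative bookkeeping (geometric recursion $a_j\le\rho a_{j-1}+\beta$ with $N$, $\rho$, $\beta/\eta$ independent of $\eta$) is also in the spirit of the paper's $C_{n+1}=8(1+C_nf(T^\ast/2))$.

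However, there is a genuine gap exactly at the point you flag: the memory term $f(t-t_1)X_\eta(t_1)$. The proposition only assumes $f\in C((0,T];[0,\infty))$ decreasing, so $f$ may blow up as its argument tends to $0$, and $f_\ast=\sup_{(0,T]}f$ need not be finite; your main argument is therefore only valid under an extra boundedness hypothesis that the statement does not grant. Your fallback for unbounded $f$ --- an initial layer $(\tau_{j-1},\tau_{j-1}+s_\eta)$ on which $Z$ is ``controlled purely by continuity'' --- does not close the argument: continuity of $Z$ with $Z(\tau_{j-1})=0$ gives no quantitative, $\eta$-uniform bound on that layer, and the connectedness/dichotomy mechanism of Lemma~\ref{lemma:qudineq_1} confines $X$ to the lower root branch only if the small-forcing inequality holds on the \emph{whole} interval under consideration; with large forcing on the initial layer nothing prevents $Z$ from climbing to the upper branch there. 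The paper's resolution is a staggered-window device you are missing: the partition advances in steps of $T^\ast/2$ while the quadratic inequality \eqref{eq:QI} is applied with base point $t_1=T_{n-1}$ (two nodes back) to control $X$ on $[T_n,T_{n+1}]$. Then $t-t_1\ge T^\ast/2$ throughout the interval where Lemma~\ref{lemma:qudineq_2} is invoked, so $f(t-t_1)\le f(T^\ast/2)<\infty$, while the window length $T_{n+1}-T_{n-1}=T^\ast$ still keeps $g_1,g_2,g_3$ below their thresholds; the price is that the initial value for Lemma~\ref{lemma:qudineq_2} at $T_n$ is now $X_\eta^{T_{n-1}}(T_n)\le X_\eta(T_n)\le C_n\eta$, which is small enough by the induction hypothesis. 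Incorporating this overlap is what makes the proof work for the full class of $f$ admitted by the hypotheses.
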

\begin{proof}
	First note that since the functions $G_i$, $i=1,2,3$, are uniformly continuous, there exists a $T^\ast$ such that
	\begin{align}\label{eq:g1g2g3}
	g_1(t+T^\ast,t)\leq \ln(2), \quad 	g_2(t+T^\ast,t)\leq \tfrac{1}{8}, \hbox{ and }	g_3(t+T^\ast,t)\leq \tfrac{1}{2} \quad  \hbox{for all } t \in [0,T-T^\ast].
	\end{align}  
	Now, set 
	\begin{align*}
	N:=\lceil \tfrac{T}{T^\ast/2} \rceil \quad \hbox{and} \quad T_n:=
	\begin{cases}
	n(T^\ast/2) & \hbox{if } n < N, \\
	T& \hbox{if } n = N,
	\end{cases} \quad \hbox{for } n=1, \ldots, N.
	\end{align*}
	Then the following inductive statement will be proven for $n=1,\ldots, N$
	\begin{equation}
\left \{\begin{array}{rl}
&X_{\eta}\in  C([0,T_n]; [0,\infty)) \quad \hbox{and there are constants } C_n>0   \hbox{ and }   \eta_n\in (0,1) \hbox{ such that } \\  
&\sup_{t\in [0,T_n]}X_{\eta}(t) \leq C_n \eta \quad \hbox {for } \eta \in (0,\eta_n).
\end{array}\right .
\tag{$A_n$}
\end{equation}

To prove the induction basis $(A_1)$ recall that $X_\eta(0)=0$ and by \eqref{eq:LE} there exists $s^\ast_\eta>0$ such that $X_\eta\in C([0,s_{\eta}^\ast); [0,\infty))$, in particular  $$t_{\eta}^\ast= \sup \{t\in [0,T]\colon X_{\eta}\in C([0,t]; [0,\infty)) \}>0.$$
Now,  applying \eqref{eq:QI} with $t_1=0$ and $t_2=\min\{t^\ast_\eta, T_1\}$	it follows using ~\eqref{eq:g1g2g3} that
\begin{align*}
X_{\eta}(t) \leq \left(2k X_{\eta}(t)^2 + \tfrac{1}{4}X_{\eta}(t) + \eta   \right)e^{K X_{\eta}(t) } \quad \hbox{for } t\in (0,\min\{t^\ast_\eta, T_1\}).
\end{align*}
Therefore applying Lemma~\ref{lemma:qudineq_2} with $0=X_{\eta}(0)\leq \min\{\tfrac{1}{16k}, \tfrac{\ln(3/2)}{K}\}$ implies that 
\begin{align}\label{eq:A1}
\sup_{t\in [0,\min\{t^\ast_\eta, T_1\})} X_{\eta}(t) \leq C_1 \eta \quad \hbox{for } \eta \in (0,\eta_1), \quad \hbox{where } C_1=8 \hbox{ and } \eta_1= \min\{\tfrac{1}{128 k}, \tfrac{\ln(3/2)}{8K}\}.
\end{align}
Assume now that $t^\ast_\eta\leq T_1<T$, then 
\begin{align*}
\sup_{t\in [0,t^\ast_\eta)} X_{\eta}(t)\leq C_1 \eta_1<\infty \quad \hbox{for } \eta \in (0,\eta_1)
\end{align*}
which is a contradiction to assumption \eqref{eq:ME}. 
If already $T_1=T$ and $t^\ast_\eta<T_1=T$, then the same argument given before leads to a contradiction, and if  $t^\ast_\eta=T_1=T$ then $\sup_{t\in [0,T)} X_{\eta}(t) <\infty$, and since $X_\eta$ is increasing, it has a continuous extension to $[0,T]$.
  Hence $X_\eta\in C([0,T_1]; [0,\infty))$ for all $\eta \in (0,\eta_1)$. Then, by continuity the supremum  in \eqref{eq:A1} can be taken in fact over $t\in [0,T_1]$ and hence $(A_1)$ holds.

For the induction step, assume that $(A_n)$ holds for $n<N$. First, one notices that by $(A_n)$ one has
\begin{align*}
\sup_{t\in [0,T_n]}X_\eta(t)\leq C_n \eta_n<\infty \quad \hbox{for } \eta\in (0,\eta_n),
\end{align*} 
and hence by \eqref{eq:ME} it follows that  $t_\eta^\ast>T_n$ for $\eta\in (0,\eta_n)$.


Next, apply the quadratic inequality \eqref{eq:QI} with $t_2=\min\{t_\eta^\ast, T_{n+1}\}$ and $t_1=T_{n-1}$ to obtain
\begin{align*}
X_{\eta}^{t_1}(t) \leq \left( 2k X_{\eta}^{t_1}(t)^2 + \tfrac{1}{4}X_{\eta}^{t_1}(t) + \eta (1+ C_nf(t-t_1) \right) e^{K X_{\eta}^{t_1}(t)} \quad \hbox{for } t\in (t_1,t_2).
\end{align*}
Observe that by the definition of $X_{\eta}^{t_1}$ and the induction hypothesis $(A_n)$
\begin{align*}
X_{\eta}^{t_1}(T_n) \leq X_{\eta}(T_n) \leq 
\min \{\tfrac{1}{16 k}, \tfrac{\ln(3/2)}{K} \} \quad \hbox{for } \eta < \eta_n^1:=\min \{\eta_n, \tfrac{1}{16 k C_n}, \tfrac{\ln(3/2)}{K C_n} \}.
\end{align*}
Hence, applying Lemma~\ref{lemma:qudineq_2} on $[T_n, t_2)$, one finds that
\begin{align}\label{eq:QIX}
X_{\eta}^{t_1}(t) \leq C_{n+1}\eta \quad \hbox{for } \eta \in (0,\eta_{n+1}) \hbox{ and } t\in [T_n,t_2)
\end{align}
with $\eta_{n+1}:=\min\{\eta_{n+1}^1, \eta_{n+1}^2\}$,
\begin{align*}
\eta_{n+1}^2 := \tfrac{1}{1+ C_nf(T^\ast/2)}\min \{ \tfrac{1}{128 k}, \tfrac{\ln(3/2)}{K} \}, \quad \hbox{and}\quad C_{n+1}= 8 (1+ C_nf(T^\ast/2)),
\end{align*}
where one uses that $f$ is decreasing hence and $f(t-T_{n-1})\leq f(T^\ast/2)$ for all $t\in [T_n,t_2]$.

Assume now if $n+1<N$ that $t_\eta^\ast\leq T_{n+1}$ for $\eta \in (0,\eta_{n+1})$, then by \eqref{eq:QIX} one obtains a contradiction to \eqref{eq:ME}, and hence $t_\eta^\ast> T_{n+1}$  and $X_\eta\in C([0,T_{n+1}]; [0,\infty))$ for $\eta \in (0,\eta_{n+1})$. If $n+1=N$, then $t_\eta^\ast< T_{n+1}=T$ for $\eta \in (0,\eta_{n+1})$ leads again to a contradiction, and if  $t_\eta^\ast= T_{n+1}=T$ for $\eta \in (0,\eta_{n+1})$ one concludes using the bound from \eqref{eq:QIX} and the fact that $X_\eta$ is increasing that $X_\eta\in C([0,T_{n+1}]; [0,\infty))$. Hence $(A_{n+1})$ follows.

The claim now follows from $(A_N)$ with $\eta^\ast=\eta_N$ and $C^\ast=C_N$.
\end{proof}


\subsection{Convergence for $\varepsilon\to 0$ and $\delta\to 0$}

\begin{proof}[Proof of Theorem~\ref{thm:main1}]
	Let $(V_{\varepsilon,\delta}, W_{\varepsilon,\delta})$ be the solution of the difference equation~\ref{eq_Diff} solved by \eqref{eq:VW}.
	Then, we set
	\begin{align}\label{eq:Xt}
	X_{\varepsilon,\delta}(t):=\norm{(V_{\varepsilon,\delta}, W_{\varepsilon,\delta})}_{\IE_{H,\delta}(t)}^2
	+\norm{(V_{\varepsilon,\delta}, W_{\varepsilon,\delta})}_{\IE_{z}(t)}^2, \quad \varepsilon,\delta>0,
	\end{align}
	and verify that this satisfies the assumptions of Proposition~\ref{prop:QI}. 
	
	\noindent
	\textit{Step 1:} By the local well-posedness of the scaled Navier-Stokes equations, cf. Proposition~\ref{prop:NS_eps}, and the global well-posedness of the primitive equations with horizontal viscosity, cf. Proposition~\ref{prop:PE_H} (a)--(b), one has property \eqref{eq:LE}, that is, for each $\varepsilon,\delta\in (0,1)$ there is a $T^\ast_{\varepsilon,\delta}$ such that
	\begin{align*}
	X_{\varepsilon,\delta}(T^\ast_{\varepsilon,\delta})\in C([0,T_{\varepsilon,\delta}^\ast);[0,\infty)).
	\end{align*}
	
		\noindent
	\textit{Step 2:} By blow-up criteria for the semi-linear equation~\ref{eq_Diff}, one has that also \eqref{eq:ME} holds. 
	
		\noindent
	\textit{Step 3:} The property \eqref{eq:QI} can be derived from the linear and non-linear estimates derived in Sections~\ref{sec:lin} and \ref{sec:non_lin}, respectively.
	Let $t_2\in (0,T]$,  $t_1\in [0,t_2)$, and, $t\in [0,t_2-t_1)$,  then consider the shifted quantities
	\begin{align*}
	(V^{t_1}_{\varepsilon,\delta}, W^{t_1}_{\varepsilon,\delta})(t)&:=(V_{\varepsilon,\delta}, W_{\varepsilon,\delta})(t+t_1),  \\
	(v^{t_1}, w^{t_1}(t)&:=(v,w)(t+t_1).
	\end{align*}
	Note that by the definition of $X_{\varepsilon,\delta}(t)$ in ~\eqref{eq:Xt}, one has for the quantity $X^{t_1}_{\varepsilon,\delta}(t)$ from \eqref{eq:QI} that for $t+t_1\in (t_1,t_2)$
	\begin{align*}
	X^{t_1}_{\varepsilon,\delta}(t)
	&=X_{\varepsilon,\delta}(t+t_1)-X_{\varepsilon,\delta}(t_1) \\
	&=\norm{(V_{\varepsilon,\delta}, W_{\varepsilon,\delta})}_{\IE_{H,\delta}(t)}^2
	+\norm{(V_{\varepsilon,\delta}, W_{\varepsilon,\delta})}_{\IE_{z}(t)}^2
	-\norm{(V_{\varepsilon,\delta}, W_{\varepsilon,\delta})}_{\IE_{H,\delta}(t_1)}^2
	-\norm{(V_{\varepsilon,\delta}, W_{\varepsilon,\delta})}_{\IE_{z}(t_1)}^2\\
	&=\norm{(V^{t_1}_{\varepsilon,\delta}, W^{t_1}_{\varepsilon,\delta})}_{\IE_{H,\delta}(t)}^2
	+\norm{(V^{t_1}_{\varepsilon,\delta}, W^{t_1}_{\varepsilon,\delta})}_{\IE_{z}(t)}^2, 
	\end{align*}
	where  the norms in the preultimate line are taken on $(0,t)$ and $(0,t_1)$, respectively, and in the last 
	equation norms for the shifted functions $(V^{t_1}_{\varepsilon,\delta}, W^{t_1}_{\varepsilon,\delta})$ are on the interval $(0,t)$ which corresponds  to norms of $(V_{\varepsilon,\delta}, W_{\varepsilon,\delta})$ on $(t_1,t+t_1)$.
	 	
		\noindent
	\textit{Step 3a ($\IE_z$-estimate):} By Proposition~\ref{prop:transport_lin} with
	\begin{align}\label{eq:omeganu}
	\omega=w^{t_1}+ \tfrac{1}{\varepsilon}W^{t_1}_{\varepsilon,\delta}\quad \hbox{and} \quad \nu=(v^{t_1},\varepsilon w^{t_1}),
	\end{align}
	  one obtains
	\begin{multline*}
\norm{(V^{t_1}_{\varepsilon,\delta}, W^{t_1}_{\varepsilon,\delta})}_{\IE_{z}(t)}^2 \leq 	C\Big(\norm{f_{\varepsilon,\delta}(V^{t_1}_{\varepsilon,\delta}, W^{t_1}_{\varepsilon,\delta})}_{L^2(0,t;V^\prime)}^2\\
+\norm{(V^{t_1}_{\varepsilon,\delta}(0), W^{t_1}_{\varepsilon,\delta}(0))}_{H}^2
\Big)e^{C(t+ \norm{V^{t_1}_{\varepsilon,\delta}}^2_{L^2(0,t;V)}+\norm{(v^{t_1},\partial_z v^{t_1}}^2_{\IE_1(t)})}.
	\end{multline*}
	Here, we have applied the higher regularity on $v$ and $w$ from Proposition~\ref{prop:PE_H}.
	More precisely, by Proposition~\ref{prop:PE_H} (a) one has 
	\begin{align*}
			\partial_z w^{t_1}=-\divergence_{H}v^{t_1}\in L^2(0,t;H_{z}^1L^2_{xy}) \quad \hbox{since}\quad  v\in \IE_z\subset L^2(0,t;H_{z}^1H^1_{xy}).
	\end{align*}
	By Step 1,  $\tfrac{1}{\varepsilon}\partial_z W^{t_1}_{\varepsilon,\delta}=-\divergence_{H} V^{t_1}_{\varepsilon,\delta}\in L^2(0,t;H_{z}^1L^2_{xy})$, so that $\omega$ as in \eqref{eq:omeganu} fulfills the assumptions of 
	Proposition~\ref{prop:transport_lin}, that is, 
	\begin{align*}
\partial_z \omega\in L^2(0,t;H) \quad \hbox{and}	\quad	\norm{\partial_z \omega}_{L^2(0,t;H)}\leq C\left( \norm{v}_{\IE_z(t)}
+\norm{V^{t_1}_{\varepsilon,\delta}}_{L^2(0,t;V)}\right).
	\end{align*}
	 By Proposition~\ref{prop:PE_H} (c) and \eqref{eq:Weps}, one finds writing 
	 $$w(\cdot,\cdot,z)=-\int_{-1}^z\divergence_{H} v(\cdot,\cdot,\xi) d\xi\quad\hbox{for}\quad z\in (1,1), \quad \hbox{that}\quad
	 w\in L^4(0,t;H^2_zL^4_{xy}),$$
	and by interpolation it follows from Proposition~\ref{prop:PE_H} (d) that also $v\in L^4(0,t;H^2_zL^4_{xy})$ and hence for $\nu$ as in \eqref{eq:omeganu}
		\begin{align*}
		\nu\in L^4(0,t;H_z^2L_{xy}^4) \quad \hbox{and}	\quad	\norm{\nu}_{L^4(0,t;H_z^2L_{xy}^4)}\leq 
		C \left(\norm{v}_{L^4(0,t;H^2_zL^4_{xy})} +  \norm{v}_{L^4(0,t;H^1_zH^{1,4}_{xy})}  \right).
	\end{align*}
	Moreover, 
	by Proposition~\ref{prop:estimate_f} one can estimate $f_{\varepsilon,\delta}$
	to obtain
%
	\begin{multline*}
\norm{(V^{t_1}_{\varepsilon,\delta}, W^{t_1}_{\varepsilon,\delta})}_{\IE_{z}(t)}^2 \leq 	C\Big(\norm{V^{t_1}_{\varepsilon,\delta}, W^{t_1}_{\varepsilon,\delta})}_{\IE_z(t))}^4
+\norm{(v^{t_1},w^{t_1})}^2_{\IE_1(t)}\norm{V^{t_1}_{\varepsilon,\delta}, W^{t_1}_{\varepsilon,\delta})}_{\IE_z(t))}^2
\\
+ \delta^2 \norm {v^{t_1}}^2_{L^2(0,T;H^3_zL^2_{xy})} + \varepsilon^2 
\norm{u^{t_1}}_{\IE_1(T)}^4 \\
+\norm{(V^{t_1}_{\varepsilon,\delta}(0), W^{t_1}_{\varepsilon,\delta}(0))}_{H}^2
\Big)e^{C(t+ \norm{V^{t_1}_{\varepsilon,\delta}}^2_{L^2(0,t;V)}+\norm{(v^{t_1},\partial_z v^{t_1}}^2_{\IE_1(t)})}.
\end{multline*}
Therefore,
	\begin{multline*}
\norm{(V^{t_1}_{\varepsilon,\delta},\varepsilon W^{t_1}_{\varepsilon,\delta})}_{\IE_{z}(t)}^2 \leq 	C\Big(X^{t_1}_{\varepsilon,\delta}(t)^2
+\norm{(v^{t_1},w^{t_1})}^4_{\IE_1(t)}X^{t_1}_{\varepsilon,\delta}(t)^2
+ \delta^2 \norm {v^{t_1}}^2_{L^2(0,T;H^3_zL^2_{xy})} 
\\+ \varepsilon^2 
\norm{u^{t_1}}_{\IE_1(T)}^4
+X^{t_1}_{\varepsilon,\delta}(t)
\Big)e^{t+ X^{t_1}_{\varepsilon,\delta}(t)+\norm{(v^{t_1},\partial_z v^{t_1}}^2_{\IE_1(t)}}.
\end{multline*}
		\noindent
\textit{Step 3b ($\IE_{H,\delta}$-estimate):} By Proposition~\ref{prop:estimate_F} and Proposition~\ref{prop:maxreg} one estimates
	\begin{multline*}
\norm{(V^{t_1}_{\varepsilon,\delta}, W^{t_1}_{\varepsilon,\delta})}_{\IE_{H}(t)}^2 
\leq 	C\Big(\norm{F_{H}((V^{t_1}_{\varepsilon,\delta}, W^{t_1}_{\varepsilon,\delta})),F_{z}(V^{t_1}_{\varepsilon,\delta}, W^{t_1}_{\varepsilon,\delta})}_{\IE_0(t)}^2+\norm{(V^{t_1}_{\varepsilon,\delta}(0), W^{t_1}_{\varepsilon,\delta}(0))}_{H^1}^2
\Big)\\
\leq
C \norm{(V_{\varepsilon,\delta},  W_{\varepsilon,\delta})}_{\IE_{H,z}(T)}^4 
+ C \norm{(V_{\varepsilon,\delta}, W_{\varepsilon,\delta})}_{\IE_{H,z}(T)}^2\norm{(v,w(v))}_{\IE_1(T)}^2 \\
+\delta^2 \norm{v}_{\IE_1(T)}^2 
+\varepsilon C (\norm{w}_{\IE_1(T)}+\norm{w}_{\IE_1(T)}^2)^2.
\end{multline*}
\noindent
\textit{Step 4:}
Step 3a and 3b together give
\begin{align*}
X^{t_1}_{\varepsilon,\delta}(t)\leq \big(kX^{t_1}_{\varepsilon,\delta}(t)^2+g_2(t,t_1)X^{t_1}_{\varepsilon,\delta}(t)+(\varepsilon+\delta)g_3(t,t_1)+f(t-t_1)X^{t_1}_{\varepsilon,\delta}(t)\big)e^{C X^{t_1}_{\varepsilon,\delta}(t)+ g_1(t,t_1)},
\end{align*}
where
\begin{align*}
g_i(t,t_1)=G_i(t)-G_i(t_1), \quad i\in \{1,2,3\}
\end{align*}
with
\begin{align*}
G_1(t)&:=C(t+\norm{(v,\partial_zv)}^2_{\IE_1(t)}), \\
G_2(t)&:=C\norm{(v,w)}^2_{\IE_1(t)}, \\
G_3(t)&:=C(1+\norm{v}^2_{\IE_z(t)\cap L^\infty(0,t;H^1)}+\norm{w}^2_{\IE_1(t)})\norm{(v,w)}^2_{\IE_1(t)} + C\norm{(v,\partial_zv)}^2_{\IE_1(t)}.
\end{align*}
Hence by Proposition~\ref{prop:QI} one has
\begin{align*}
\max_{t\in [0,T]}X^{t_1}_{\varepsilon,\delta}(t)\leq C(\varepsilon+\delta).
\end{align*}
\end{proof}

\subsection{Convergence for $\varepsilon\to 0$ and $\delta\to \infty$}
\begin{proof}[Proof of Theorem~\ref{thm:main2}]
Consider \eqref{eq:NS_eps4} with 
\eqref{eq:VW_2}, then set
	\begin{align*}
X_{\varepsilon, \delta}(t):=\norm{(V_{\varepsilon,\delta}, W_{\varepsilon,\delta})}_{\IE_{1,\delta}(t)}^2, \quad \varepsilon,\delta>0,
\end{align*}
Estimating the right hand side of  
\eqref{eq:NS_eps4} by 
Proposition~\ref{prop:F_diff2} and Proposition~\ref{prop:2DNS} one gets
for $\varepsilon\in (0,1]$ and $\delta \geq 1$
\begin{align*}
X_{\varepsilon,\delta}(t) \leq 
C\big(X_{\varepsilon,\delta}(t)^2+X_{\varepsilon,\delta}(t)
\norm{u_{0,\infty}^{\varepsilon,\delta}}_{L^4(0,t;H^{3/2}(\Omega))}
 +\norm{u_{0,\infty}^{\varepsilon,\delta}}_{L^4(0,t;H^{3/2}(\Omega))}\tfrac{1}{\delta^{1/4}}
 \big).
\end{align*}
Setting
\begin{align*}
k=C, \quad K>0, \quad f=0, \quad G_1=0, \quad \hbox{and}\quad G_2(t)=G_3(t)=C\sup_{\varepsilon\in (0,1]}\norm{u_{0,\infty}^{\varepsilon,\delta}}_{L^4(0,t;H^{3/2}(\Omega))},
\end{align*}
one can apply Proposition~\ref{prop:QI}
for all $\varepsilon\in (0,1]$, and hence using again Proposition~\ref{prop:2DNS} for $\delta$ sufficiently large
\begin{align*}
\sup_{\varepsilon\in (0,1] }\left(\norm{\overline{v}-\overline{v}_{\varepsilon,\delta}}_{\IE_1} +\norm{\tilde{u}_{\varepsilon,\delta}}_{L^4(0,T;H^{3/2}(\Omega))}\right) \leq \left( 
\sup_{\varepsilon\in (0,1] } X_{\varepsilon,\delta}(T) 
+\norm{\tilde{u}_{0,\infty}^{\varepsilon,\delta}}_{L^4(0,T;H^{3/2}(\Omega))}\right)
\leq\frac{C}{\delta^{1/4}}.
\end{align*} 
\end{proof}

\subsection*{Acknowledgment} We would like to thank anonymous referees for their helpful
	comments, references and suggestions.

\subsection*{Declarations: Funding and/or Conflicts of interests/Competing interests}
None.

\bibliographystyle{abbrv}
\bibliography{literature}
\end{document}